\definecolor {processblue}{cmyk}{0.96,0,0,0}
\newcommand\cyr{%
\renewcommand\rmdefault{wncyr}%
\renewcommand\sfdefault{wncyss}%
\renewcommand\encodingdefault{OT2}%
\normalfont
\selectfont}
\DeclareTextFontCommand{\textcyr}{\cyr}
\DeclareFontFamily{OT1}{rsfs}{}
\DeclareFontShape{OT1}{rsfs}{n}{it}{<-> rsfs10}{}
\DeclareMathAlphabet{\mathscr}{OT1}{rsfs}{n}{it}
\numberwithin{equation}{section}
\newcounter{thm}
\newtheorem{theorem}{Theorem}[section]
\newtheorem{lem}[theorem]{Lemma}
\newtheorem{cor}[theorem]{Corollary}
\theoremstyle{definition}
\newtheorem{prop}[theorem]{Proposition}
\theoremstyle{remark}
\newtheorem{remark}[theorem]{Remark}
\newtheorem{example}[theorem]{Example}
\newcommand{\Q}{\mathbb{Q}}
\newcommand{\N}{\mathbb{N}}
\newcommand{\Ass}{\operatorname{Ass}}
\newcommand{\im}{\operatorname{im}}
\newcommand{\Spec}{\operatorname{Spec}}
\newcommand{\pd}{\operatorname{pd}}
\newcommand{\id}{\operatorname{id}}
\newcommand{\Ext}{\operatorname{Ext}}
\newcommand{\Supp}{\operatorname{Supp}}
\newcommand{\Hom}{\operatorname{Hom}}
\newcommand{\depth}{\operatorname{depth}}
\newcommand{\coker}{\operatorname{coker}}
\newcommand{\rank}{\operatorname{rank}}
\newcommand{\p}{\mathfrak{p}}
\newcommand{\m}{\mathfrak{m}}
\newcommand{\q}{\mathfrak{q}}
\newcommand{\w}{\omega}
\DeclareMathOperator{\DF}{DF}
\DeclareMathOperator{\CM}{CM}
\DeclareMathOperator{\Deep}{Deep}
\DeclareMathOperator{\End}{End}
\DeclareMathOperator{\hit}{ht}
\DeclareMathOperator{\Ann}{Ann}
\DeclareMathOperator{\Soc}{Soc}
\DeclareMathOperator{\Min}{Min}
\DeclareMathOperator{\Tr}{Tr}
\renewcommand{\mod}{\ensuremath{\operatorname{mod}}}
\begin{document}
\title[Hom and Ext, Revisited]{Hom and Ext, Revisited}

\author[Dao]{Hailong Dao}
\email[Hailong Dao]{hdao@ku.edu}

\address{Department of Mathematics\\
University of Kansas\\
Lawrence, KS 66045-7523 USA}

\author[Eghbali]{Mohammad Eghbali}
\email[Mohammad Eghbali]{m.eghbali007@yahoo.com}

\address{Faculty of Mathematical Sciences and Computer \\
Kharazmi University \\  Tehran,  Iran}

\author[Lyle]{Justin Lyle}
\email[Justin Lyle]{justin.lyle@ku.edu}

\address{Department of Mathematics\\
University of Kansas\\
Lawrence, KS 66045-7523 USA}

\date{\today}

\thanks{2010 {\em Mathematics Subject Classification\/}: 13C13, 13D02, 13D05, 13H05, 13H10}
\keywords{$\Hom$, $\Ext$, test for freeness}

\begin{abstract}
Let $R$ be a commutative Noetherian local ring and $M,N$ be finitely generated $R$-modules. We prove a number of results of the form: if $\Hom_R(M,N)$ has some nice properties and $\Ext^{1 \leq i \leq n}_R(M,N)=0$ for some $n$, then $M$ (and sometimes $N$) must be  close to free. Our methods are quite elementary, yet they suffice to give a unified treatment, simplify, and sometimes extend a number of results in the literature. 

\end{abstract}

\dedicatory{Dedicated to Professor Craig Huneke on the occasion
     of his $65$th  birthday}

\maketitle

\section{Introduction}

Let $R$ be a commutative Noetherian local ring and $M,N$ be finitely generated $R$ modules. The original purpose of this project is to understand a large and growing body of results which take the form: if $\Hom(M,N)$ has some nice properties and $\Ext^{1 \leq i \leq n}_R(M,N)=0$ for some $n$, then $M$ and $N$ must be nice themselves. 

For example, about 50 years ago Vasconcelos proved that if $R$ is a Gorenstein ring of dimension $1$, and $M$ is a maximal Cohen-Macaulay (MCM) $R$-module such that $\End_R(M)$ is free, then $M$ is free (\cite{VA68}). Ulrich proposed tests for the Gorensteiness of $R$ using $\Ext$-vanishing between certain modules and $R$ (\cite{UL84}).  Huneke and Leuschke proved an interesting special case of the famous Auslander-Reiten conjecture. One of the main results says that if $R$ is a normal domain of dimension $d$ and $M$ is a module locally free in codimension one, and if $\Ext^{1\leq d}_R(M,M)= \Ext^{1\leq j\leq 2d+1}_R(M,R)=0$, then $M$ must be free (\cite{HL04}). These influential results have been examined and extended quite frequently, see \cite{AC17, CI16, GT17, LI17a, LI17b} for a sample of some interesting work appearing just within the last year and the references therein. These papers all serve as the main inspiration for our work.

Our approach to the questions above is to first study, as thoroughly as we can,  the small dimension or depth situation. This is important in our view since most of the proofs involve an inductive process by localization or cutting down with a regular sequence. Surprisingly, this simple-minded approach makes the problems more transparent and yields significant improvements; we can usually  remove assumptions such as Cohen-Macaulayness, constant rank, $M=N$, etc.,  altogether. At the same time,  proofs become shorter and more elementary. In fact, we do not need much preparatory material beyond graduate level commutative algebra. Thus we hope that our paper will be useful for young people just starting on the subjects.

We now describe our work in more detail. Let $R$ be a local ring of dimension $d$ and depth $t$. In Section \ref{deep} we define two categories of modules that are crucial for our analysis. One is called $\Omega\Deep(R)$, which consists of modules $M$ that are a syzygy of some high-depth module. That is, such an $M$ fits into an exact sequence $0\to M\to F\to X\to 0$ with $F$ free and $\depth X\geq t$. Somewhat dually, the second category, $\DF(R)$, consists of $M$ such that there is an exact sequence $0\to F\to M^n \to X\to 0$ with $F$ free and $\depth X\geq t$ ($\DF$ stands for ``deeply faithful"). We establish a number of simple but useful results about these categories. For example, they behave well with respect to ``cutting down by a general regular sequence", and  any object lying in their intersection must have a free summand (we actually prove a bit more, see Theorem \ref{corintersectfreesummand}). 

In Section \ref{MNfree} we study the question: when does $\Hom_R(M,N)$ have a free summand or is free? Here our first main result is:

\setcounter{section}{3}
\setcounter{theorem}{5}

\begin{theorem}\label{intro1}
Suppose that $\depth M\geq t$ and $N\in \Omega \Deep(R)$. Assume that $\Hom _R(M,N) \in \DF(R)$ and  $\Ext^{1 \leqslant i \leqslant t-1}_R(M,N)=0$. Then $N$ has a free summand. 
\end{theorem}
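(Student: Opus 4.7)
The strategy is to show that $N$ itself belongs to $\DF(R)$; once established, the hypothesis $N \in \Omega\Deep(R)$ together with Theorem \ref{corintersectfreesummand} immediately produces a free summand of $N$. As a preparation, the depth lemma applied to the defining sequence $0 \to N \to F \to X \to 0$ of $N \in \Omega\Deep(R)$ gives $\depth N \geq t$.

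To transfer the $\DF$-structure from $\Hom_R(M, N)$ to $N$, I would pick a free resolution $\cdots \to F_1 \to F_0 \to M \to 0$ with $F_i = R^{a_i}$ and apply $\Hom_R(-, N)$. The vanishing of $\Ext^{1 \leq i \leq t-1}_R(M, N)$ produces the exact sequence
\[
0 \to \Hom_R(M, N) \to N^{a_0} \xrightarrow{d_0} N^{a_1} \to \cdots \to N^{a_{t-1}} \xrightarrow{d_{t-1}} N^{a_t}.
\]
Setting $B_i := \im(d_i)$, this breaks into short exact sequences $0 \to \Hom_R(M, N) \to N^{a_0} \to B_0 \to 0$ and $0 \to B_{i-1} \to N^{a_i} \to B_i \to 0$ for $1 \leq i \leq t-1$, together with the inclusion $B_{t-1} \hookrightarrow N^{a_t}$. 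The main technical step is the depth estimate $\depth B_0 \geq t$: since $B_{t-1}$ embeds in $N^{a_t}$ and $\depth N^{a_t} \geq t$, we have $\Ass B_{t-1} \subseteq \Ass N^{a_t}$ avoiding $\m$ (for $t \geq 1$; the $t = 0$ case is trivial), so $\depth B_{t-1} \geq 1$. Applying the depth lemma upward along the short exact sequences then inductively yields $\depth B_{t-1-j} \geq \min\{t, j+1\}$, reaching $\depth B_0 \geq t$ at $j = t-1$.

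To conclude, I would take the $n$-th direct power of $0 \to \Hom_R(M, N) \to N^{a_0} \to B_0 \to 0$ and compose with the $\DF$-witness $0 \to F' \to \Hom_R(M, N)^n \to Y' \to 0$ (with $F'$ free and $\depth Y' \geq t$) to obtain an exact sequence $0 \to F' \to N^{na_0} \to Y \to 0$ in which $Y$ fits into $0 \to Y' \to Y \to B_0^n \to 0$. Since $\depth Y' \geq t$ and $\depth B_0^n \geq t$, the depth lemma delivers $\depth Y \geq t$, certifying $N \in \DF(R)$. The only genuinely substantive step is the iterated depth calculation on the $B_i$; the remainder is an assembly of diagrams.
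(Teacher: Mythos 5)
Your proof is correct, and it takes a genuinely different route from the paper's. The paper's own argument for Theorem~\ref{freesummandsMN} first uses Lemma~\ref{summandlem}(1) to place $\Hom_R(M,N)$ in $\Omega\Deep(R)\cap\DF(R)$, then reduces modulo a general regular sequence via Proposition~\ref{homcutdown} and Lemma~\ref{deepcutdown}, verifies $\overline{N}\in\DF(\overline{R})$ over the Artinian cut-down ring (where the $\Ext$-vanishing hypothesis is vacuous), and finally lifts the free summand back up with Lemmas~\ref{minsyz} and \ref{summandcutdown}. You instead prove $N\in\DF(R)$ directly over $R$: this is in substance a full proof of Lemma~\ref{summandlem}(2), which the paper states but dispatches with ``proved similarly, also using Lemma~\ref{pushouts}.'' The depth chain on the images $B_i$, using $\depth N\geq t$ (which, as you note, follows from $N\in\Omega\Deep(R)$ by the depth lemma applied to the defining sequence), is exactly the computation that the paper leaves implicit. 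The final assembly of the $\DF$-witness for $N^{a_0}$ is the pushout construction of Lemma~\ref{pushouts}(2), which you redo by hand. The payoff of your route is that it avoids the entire cutting-down apparatus and is more self-contained; a small bonus is that it never invokes $\depth M \geq t$, so you in fact establish a marginally stronger statement. One small attribution point: the result you need in the final step is really Lemma~\ref{intersectfreesummand} ($N\in\Omega\Deep(R)\cap\DF(R)\Rightarrow R\mid N$), rather than Theorem~\ref{corintersectfreesummand}, though the latter applies trivially via the identity map if one insists.
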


This allows us to generalize both the results by Vasconcelos and Huneke-Leuschke mentioned above in Corollary \ref{MNFree} and Theorem \ref{HunekeLeuschkegen}. We also prove that if $R$ and $M$ satisfy Serre's condition $(S_2)$ and $\Hom_R(M,R)$ is free, then $M$ is free (see \ref{generalMdual}).  The key point here is the dimension one case. Lastly, we  extend a result by Goto-Tatakashi to higher rank modules (see Theorem \ref{GTgen}).

In Section \ref{MNN} we study when $\Hom_R(M,N)\cong N^r$ for some $r>0$. Again we start with the small depth or dimension situation and build from there. Our main technical result is:
\setcounter{section}{4}
\setcounter{theorem}{6}

\begin{theorem}\label{intro2}
Assume that   $\depth(N)=t$, $\depth(M) \geqslant t$, $\Ass(N)=\Min(N)$,  and for some $s\geq t$,  $\Ext^{1 \leqslant i \leqslant s} _R (M, N)=0$. If $\Hom(M, N) \cong N^{r}$ for some $r \in \N$, then $M/IM \cong (R/I)^r$ for $I=\Ann(N)$.

Furthermore, if  one of the following holds:
\begin{enumerate}
\item $N$ is faithful. 
\item $\Ass(R) \subseteq \Ass(N)$ and $s>0$.  
\end{enumerate}  
then $M \cong R^{r}$.
\end{theorem}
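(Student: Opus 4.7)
My plan is to prove the main claim $M/IM\cong(R/I)^{r}$ by induction on $t=\depth N$, then derive the two additional conclusions. For the \emph{base case} $t=0$, the hypothesis $\mathfrak m\in\Ass N=\Min N=\Min(R/I)$ forces $\bar R:=R/I$ to be Artinian local, $N$ to be a faithful $\bar R$-module, and $\Hom_{\bar R}(\bar M,N)\cong N^{r}$ with $\bar M:=M/IM$; note the Ext-vanishing range is empty here.  Let $E$ denote the injective envelope of the residue field over $\bar R$ and $(-)^{\vee}=\Hom_{\bar R}(-,E)$ Matlis duality.  Since $\bar M$ is finitely presented, one has the identity $\Hom_{\bar R}(\bar M,N)^{\vee}\cong \bar M\otimes_{\bar R}N^{\vee}$, so the hypothesis dualizes to $\bar M\otimes_{\bar R}N^{\vee}\cong (N^{\vee})^{r}$.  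Tensoring further with the residue field yields $\mu_{\bar R}(\bar M)=r$ (using $N\neq 0$, so $N^{\vee}\neq 0$).  Choose a surjection $\bar R^{r}\twoheadrightarrow\bar M$ with kernel $K$; tensoring with $N^{\vee}$ gives a surjection of finitely generated modules $(N^{\vee})^{r}\twoheadrightarrow \bar M\otimes_{\bar R} N^{\vee}\cong (N^{\vee})^{r}$, which is an isomorphism by the Noetherian property.  Hence $K\otimes_{\bar R}N^{\vee}=0$, and Nakayama forces $K=0$, giving $\bar M\cong\bar R^{r}$.

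For the \emph{inductive step} $t>0$, choose a sufficiently general $x\in\mathfrak m$ that is regular on $R$, $M$, $N$, and $M/IM$, while preserving the condition $\Ass(\cdot)=\Min(\cdot)$ after the cut (facilitated by the machinery of Section~\ref{deep} on general regular sequences).  Since $x$ is $M$-regular, standard change-of-rings gives $\Ext^{i}_{R/(x)}(M/xM,N/xN)\cong\Ext^{i}_{R}(M,N/xN)$; combined with the long exact sequence from $0\to N\xrightarrow{x}N\to N/xN\to 0$, this yields $\Ext^{i}_{R/(x)}(M/xM,N/xN)=0$ for $1\leq i\leq s-1$ and $\Hom_{R/(x)}(M/xM,N/xN)\cong (N/xN)^{r}$.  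Induction over $R/(x)$ then produces $(M/xM)/(I+(x))(M/xM)\cong(R/(I+(x)))^{r}$, which lifts via Nakayama and the $M/IM$-regularity of $x$ (implying $\Tor_{1}^{R}(M/IM,R/(x))=0$) to the desired $M/IM\cong (R/I)^{r}$.

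For the \emph{furthermore}, case (1) is immediate since $N$ faithful gives $I=0$. For case (2), lift the isomorphism $(R/I)^{r}\xrightarrow{\sim} M/IM$ to a map $\phi:R^{r}\to M$; Nakayama makes $\phi$ surjective.  Let $K=\ker\phi$; applying $\Hom_{R}(-,N)$ to $0\to K\to R^{r}\to M\to 0$ and using $\Ext^{1}_{R}(M,N)=0$ (here $s\geq 1$ is essential) yields the short exact sequence $0\to\Hom(M,N)\to N^{r}\to\Hom(K,N)\to 0$. For each $\mathfrak p\in\Ass R\subseteq\Ass N=\Min N$, minimality of $\mathfrak p$ in $\Supp N$ gives $\Supp N_{\mathfrak p}=\{\mathfrak pR_{\mathfrak p}\}$, so $N_{\mathfrak p}$ has finite length.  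Localizing, we obtain an injection $\Hom(M,N)_{\mathfrak p}\cong N_{\mathfrak p}^{r}\hookrightarrow N_{\mathfrak p}^{r}$ of finite-length modules of equal length, hence an isomorphism; consequently $\Hom(K_{\mathfrak p},N_{\mathfrak p})=0$.  Since $\Ass_{R_{\mathfrak p}}N_{\mathfrak p}=\{\mathfrak pR_{\mathfrak p}\}$, this forces $K_{\mathfrak p}=0$.  As $K\hookrightarrow R^{r}$ gives $\Ass K\subseteq\Ass R$ and $K$ vanishes at every associated prime of $R$, we conclude $K=0$, and hence $M\cong R^{r}$.  The main technical obstacle is the inductive step --- in particular, arranging that $x$ preserves $\Ass(N)=\Min(N)$ together with the other regularity conditions, and lifting the conclusion from $R/(x)$ back to $R$; the Matlis-duality base case and the localization-at-associated-primes argument for case (2) are comparatively clean.
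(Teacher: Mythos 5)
Your base case ($t=0$, via Matlis duality over the Artinian ring $R/I$) is correct, and your treatment of the ``furthermore'' clauses is fine --- case (1) is trivial, and your case (2) argument is essentially the paper's Lemma~\ref{PM} inlined with $P=R^{r}$. But the inductive step has a genuine gap: the hypothesis $\Ass(N)=\Min(N)$ is \emph{not} preserved under cutting by a general $N$-regular element, and in fact it is provably destroyed whenever $N$ is not Cohen--Macaulay. If $\depth N=1<\dim N$, then for \emph{any} $N$-regular $x$, $\depth(N/xN)=0$ forces $\m\in\Ass(N/xN)$, while $\dim(N/xN)=\dim N-1\geq 1$ means $\m\notin\Min(N/xN)$. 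Concretely: $R=k\llbracket x,y,z,w\rrbracket$, $N=R/\bigl((x,y)\cap(z,w)\bigr)$, $M=R$, $r=1$ satisfies every hypothesis of the theorem (with $t=1$), yet every general linear form $\ell$ yields $\Ass(N/\ell N)\supsetneq\Min(N/\ell N)$, so your inductive hypothesis never applies. The appeal to ``the machinery of Section~\ref{deep}'' does not help: that machinery only controls depth, not embedded primes.

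The paper avoids this trap by never cutting to reduce the $\Ass=\Min$ condition. It first shows $\mu(M)=r$ via Proposition~\ref{type} (cutting is harmless there, since one only tracks $\upsilon_t$ and needs depth, not $\Ass=\Min$), and then separately proves $I_{r-1}(M)N=0$ via Corollary~\ref{fittingbest}, which \emph{localizes} at each $\p\in\Min(I_{r-1}(M)N)$ rather than cutting. At such $\p$ the module $(I_{r-1}(M)N)_{\p}$ has finite length, forcing $\depth N_{\p}=0$, hence $\p\in\Ass(N)=\Min(N)$, hence $N_{\p}$ has finite length --- and Theorem~\ref{fitting} (the zero-dimensional statement you essentially reprove in your base case) finishes it. The two facts $\mu(M)=r$ and $I_{r-1}(M)\subseteq I$ immediately give $M/IM\cong(R/I)^{r}$. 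To repair your argument you would need to replace the induction-by-cutting in the main claim with this localization strategy (or something equivalent); the rest of your proposal can then be kept.
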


\setcounter{section}{1}
\setcounter{theorem}{1}
We give some applications, including a modest case of the Auslander-Reiten conjecture (Corollary \ref{MM}).

In the last section, we address a couple of related topics: a test for Gorensteiness inspired by an old result of Ulrich (Corollary \ref{testgor}), and an equivalent condition for vanishing of $\Ext$ modules that slightly extends results by Huneke-Hanes, Huneke-Jorgensen and Huneke-Leuschke in \cite{HH05, HJ03, HL04}, see Corollary \ref{tight}. 

\section{Two key categories}\label{deep}

Throughout $(R,\m)$ is a Noetherian local ring with $\dim(R)=d$ and $\depth(R)=t$. In this section we define and establish basic facts about two categories of modules that play a crucial role for many of our proofs. 

But first, some notation. We let $\mu(M)$ denote the minimal number of generators of a module $M$ and $l(M)$ its length. We say that $M$ is generically free if $M_{\p}$ is free over $R_{\p}$ for any $\p \in \Ass(R)$. Let $(S_i)$ denote Serre's condition: $\depth M_{\p}\geq \min\{i, \hit \p\}$.

$M$ is said to be free in codimension $n$ if $M_{\p}$ is free for each prime $\p$ of height at most $n$. $M$ has a rank if it is generically free and the rank over all $\p \in \Ass(R)$ is constant. We use the notation $M \mid N$ to say that $M$ is a summand of $N$. 

For $R$, being  $(G_j)$ means Gorenstein in codimension $j$. Let $e(\underline x,M)$ denote the multiplicity of $M$ with respect to a sequence $\underline x$. Without further comment, we will often use the notation $\overline{(-)}=- \otimes_R R/\underline{x}$ when the sequence $\underline{x}$ is clear from context.  As usual, $\mod(R)$ and $\CM(R)$ denote the category of finitely generated  and maximal Cohen-Macaulay modules respectively. 

We let  $\Deep(R) =\{X  \mid \depth(X) \geqslant t\}$.  If $\mathcal C$ is a category, we use the notation 
\[\Omega\mathcal C:=\{M \mid \exists \ 0 \to M \to R^n \to X \to 0 \mbox{ exact for some}\ X\in \mathcal C\}.\] We let $\Omega^{i+1}\mathcal C:= \Omega\Omega^i\mathcal C$. The first category that is important to us is $\Omega\Deep(R)$. That is:
\[\Omega \Deep (R):= \{M \mid \exists\ 0\rightarrow M\rightarrow R^{n} \rightarrow X\rightarrow 0\ \mbox{exact for some}\ n\in \N\ \mbox{and}\ X \in \Deep(R)\},
\]
We also consider: \[\DF(R):=\{M \mid \mbox{$\exists$ \ $0\rightarrow R\rightarrow M^{n} \rightarrow X\rightarrow 0$ exact for some $n\in \N$ and $X \in \Deep(R)$}\}.\]

We define $\Omega_{\min} \Deep(R)=\{M \mid \exists\ 0 \to M \to R^{\mu(X)} \to X \to 0 \mbox{ exact with}\ X\in \Deep(R)\}$.  If $M \in \Omega_{\min} \Deep(R)$, we say that $M$ is a minimal syzygy in $\Omega \Deep(R)$.

If $M \in \Deep(R)$, we say that $M$ is a deep module, and likewise if $M \in \DF(R)$, we say that $M$ is deeply faithful.  

\begin{remark}
\begin{enumerate}

\item $R$ is in $\Deep(R)\cap \DF(R)$. Any $t$-syzygy module is in $\Deep(R)$ (in other words $\Omega^t\mod(R) \subseteq \Deep(R)$).
\item Both $\Omega\Deep(R)$ and $\DF(R)$ are subcategories of $\Deep(R)$.  
\item It is clear that for any $X \in \DF(R)$, $X$ is faithful, and when $\depth(R)=0$ deeply faithful and faithful modules coincide.

\item One can see that
\[ \Omega^{t+1}(\mod R) \subseteq \Omega \Deep(R).\]

\item If $R$ is Cohen-Macaulay, then $\Deep(R)=\CM(R)$. Furthermore, $R$ is Gorenstein if and only if $\Omega\CM(R)=\CM(R)$. 

\item In case $R$ is Cohen-Macaulay and admits a canonical module $\omega_R$ then, for an $R$-module $M$, we set $M^{\vee}=\Hom_R(M,\omega_{R})$. In this case we have 

\[{\Omega \CM(R)}=\{X^\vee \mid X \in \CM(R), \mbox{$\exists$ $\omega_{R}^{n} \twoheadrightarrow X$, for some $n \in \N$}\},\] and
\[\DF(R) = \{X^\vee \mid X\in \CM(R), \mbox{$\exists$ $X^{n}\twoheadrightarrow \omega_{R}$, for some $n \in \N$}\}.\]
\item If $M$ is a {\it semi-dualizing} module (that is, if the natural map $R \to \Hom_R(M,M)$ is an isomorphism and $\Ext^{i>0}_R(M,M)=0$), then $M \in \DF(R)$. See Lemma \ref{summandlem}.
\end{enumerate}
\end{remark}

\begin{remark}\label{general}
When the proof of a statement involves finitely many objects in $\Deep(R)$, one can use prime avoidance to find a regular sequence $\underline x$ of length at most $t$ on all of them. In such situations we shall often say, without further comments, that $\underline x$ is a {\it general} regular sequence. 
\end{remark}

We illustrate the above remark in the following simple but useful result:  
\begin{lem}\label{deepcutdown}
Suppose $t>0$.  If $M \in \Omega \Deep(R)$ (resp. $M \in \DF(R)$) then, for a general regular sequence $\underline{x}$, we have $\overline{M} \in \Omega \Deep(\overline{R})$ (resp. $\overline{M} \in \DF(\overline{R})$).
\end{lem}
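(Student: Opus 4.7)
The plan is to use the prime avoidance strategy outlined in Remark \ref{general}: in each case we have a single defining short exact sequence, and it suffices to pick a regular sequence that is simultaneously regular on $R$ and on the ``deep'' cokernel (or quotient) $X$, and then check that tensoring down preserves both exactness and the depth bound required to land in $\Deep(\overline{R})$.

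First I would write the defining sequences. For $M \in \Omega\Deep(R)$, fix $0 \to M \to R^n \to X \to 0$ with $\depth X \geq t$; for $M \in \DF(R)$, fix $0 \to R \to M^n \to X \to 0$ with $\depth X \geq t$. By prime avoidance applied to $\Ass(R) \cup \Ass(X)$, since $t > 0$ I can choose $x_1 \in \mathfrak{m}$ that is a nonzerodivisor on both $R$ and $X$, and iterate to produce a general regular sequence $\underline{x}$ of length $s$ (for any $1 \leq s \leq t$) on both $R$ and $X$. Note $\depth(\overline{R}) = t - s$.

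Next I would check that the sequence survives reduction modulo $\underline{x}$. In the $\Omega\Deep$ case, $\underline{x}$ is automatically $R^n$-regular, hence $M$-regular since $M \hookrightarrow R^n$; combining with $\underline{x}$-regularity on $X$ (so $\Tor_1^R(R/\underline{x}, X) = 0$), the Tor long exact sequence yields an exact sequence $0 \to \overline{M} \to \overline{R}^n \to \overline{X} \to 0$. In the $\DF$ case, the Tor long exact sequence of $0 \to R \to M^n \to X \to 0$ gives
\[
0 = \Tor_1^R(R/\underline{x}, R) \to \Tor_1^R(R/\underline{x}, M^n) \to \Tor_1^R(R/\underline{x}, X) = 0,
\]
so $\underline{x}$ is automatically $M^n$- (hence $M$-) regular, and we obtain an exact sequence $0 \to \overline{R} \to \overline{M}^n \to \overline{X} \to 0$. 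In either case, because $\underline{x}$ is $X$-regular, $\depth_{\overline{R}}(\overline{X}) = \depth_R(X) - s \geq t - s = \depth(\overline{R})$, so $\overline{X} \in \Deep(\overline{R})$. This exhibits $\overline{M}$ in $\Omega\Deep(\overline{R})$ or $\DF(\overline{R})$ respectively.

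I expect no serious obstacle; the only mildly subtle point is that in the $\DF$ case we never explicitly require $\underline{x}$ to be $M$-regular, and instead deduce it from the Tor vanishing on the two outer terms. If one preferred to iterate the argument one element at a time, the same reasoning applies at each step, and the final choice of $\underline{x}$ is a ``general'' regular sequence in the sense of Remark \ref{general}.
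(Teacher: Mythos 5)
Your proof is correct and follows the same route as the paper's: pick a general regular sequence on $R$ and the deep cokernel $X$ of the defining sequence, observe the sequence persists modulo $\underline{x}$, and note $\overline{X}$ remains deep over $\overline{R}$. You simply make explicit the Tor argument (and, in the $\DF$ case, the derivation that $\underline{x}$ is automatically $M$-regular from the two outer terms) that the paper leaves to the reader under the phrase ``the sequences remain exact modulo $x$.''
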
 

\begin{proof}
As $M \in \Omega \Deep(R)$ (resp. $M \in \DF(R)$), there is an exact sequence $0 \to M \to R^n \to X \to 0$ with $X \in \Deep(R)$ (resp. $0 \to R \to M^n \to X \to 0$ with $X \in \Deep(R)$). Then, as in Remark \ref{general}, for sufficiently general $x \in \m$, $x$ is regular on $R$ and $X$, and so the sequences remain exact modulo $x$.  The result then follows from induction. 
\end{proof}

\begin{lem}\label{minsyz}
Assume that $t=0$, $0 \neq M \in \Omega\Deep(R)= \Omega\mod(R)$. The following are equivalent.
\begin{enumerate}
\item $R \mid M$.    
\item $M$ is faithful.
\item $\Soc(R) \nsubseteq \Ann(M)$.
\item $M$ is not a minimal syzygy on $R$.
\end{enumerate} 
\end{lem}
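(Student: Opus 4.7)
The plan is to establish the cycle $(1) \Rightarrow (2) \Rightarrow (3) \Rightarrow (1)$ and then handle the equivalence $(1) \Leftrightarrow (4)$ separately. The key preliminary observation is that $\depth(R) = 0$ (together with $M \neq 0$, so $R \neq 0$) forces $\m \in \Ass(R)$, so $\Soc(R) \neq 0$. Given this, $(1) \Rightarrow (2) \Rightarrow (3)$ are formal: if $R \mid M$ then $\Ann(M) \subseteq \Ann(R) = 0$, and a faithful module has zero annihilator, which cannot contain the nonzero submodule $\Soc(R)$.

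The heart of the argument is $(3) \Rightarrow (1)$. Using $M \in \Omega \mod(R)$, I would fix an embedding $M \hookrightarrow R^n$, pick $s \in \Soc(R) \setminus \Ann(M)$, and choose $m \in M$ with $sm \neq 0$. Writing $m = (r_1, \ldots, r_n)$, some coordinate must satisfy $sr_i \neq 0$; since $s\m = 0$, this $r_i$ cannot lie in $\m$, hence is a unit. Then the composition
\[
R \xrightarrow{1 \mapsto m} M \hookrightarrow R^n \xrightarrow{\pi_i} R
\]
is multiplication by a unit, hence an isomorphism, so $R \mid M$.

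For $(1) \Leftrightarrow (4)$, I would use the standard fact that a surjection $R^n \twoheadrightarrow X$ with kernel $M$ is minimal (i.e.\ $n = \mu(X)$) if and only if $M \subseteq \m R^n$. For $(1) \Rightarrow (4)$: if $\iota \colon R \hookrightarrow M$ is split and $M$ is a minimal syzygy via $0 \to M \to R^n \to X \to 0$, then $\iota(1) \in \m R^n$, so for any nonzero $s \in \Soc(R)$ we get $\iota(s) = s\iota(1) \in s \m R^n = 0$, contradicting injectivity of $\iota$. For $(4) \Rightarrow (1)$: starting from an exact sequence $0 \to M \to R^n \to X \to 0$ witnessing $M \in \Omega\Deep(R)$, I would lift the surjection $R^n \twoheadrightarrow X$ through a minimal cover $R^{\mu(X)} \twoheadrightarrow X$ to obtain, via Nakayama, a surjection $R^n \twoheadrightarrow R^{\mu(X)}$. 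This splits since $R^{\mu(X)}$ is free, giving $R^n \cong R^{\mu(X)} \oplus R^{n - \mu(X)}$ and a matching decomposition $M \cong K \oplus R^{n - \mu(X)}$, where $K$ is the kernel of the minimal cover. Since $M$ is not a minimal syzygy we must have $n > \mu(X)$, and hence $R \mid M$. The only delicate point is the last splitting, but this is routine from projectivity of free modules over a local ring.
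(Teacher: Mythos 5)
Your proof is correct, and the logic ($1 \Rightarrow 2 \Rightarrow 3 \Rightarrow 1$ together with $1 \Leftrightarrow 4$) covers all the equivalences, but the route through the nontrivial implication differs from the paper's. The paper runs a single cycle $(4) \Rightarrow (1) \Rightarrow (2) \Rightarrow (3) \Rightarrow (4)$, declaring the first three implications clear and proving only $(3) \Rightarrow (4)$ by contrapositive: if $M$ is a minimal syzygy, then $M \subseteq \m R^{\mu(X)}$ (automatic from minimality of the cover), so $\Soc(R)M \subseteq \Soc(R)\,\m R^{\mu(X)} = 0$. You instead give a direct, element-level proof of $(3) \Rightarrow (1)$: pick $s \in \Soc(R)$ and $m \in M$ with $sm \neq 0$, embed $M$ in some $R^n$, observe that since $s\m = 0$ some coordinate $r_i$ of $m$ must be a unit, and conclude that $\pi_i$ restricted to $Rm$ splits off a free summand. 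This neatly fuses the paper's two steps $(3) \Rightarrow (4)$ and $(4) \Rightarrow (1)$ into one, and it has the pleasant feature of working with an arbitrary embedding $M \hookrightarrow R^n$, not just a minimal one. Your $(4) \Rightarrow (1)$ argument (lifting the cover through a minimal cover, splitting, and matching decompositions) is exactly what the paper implicitly relies on when it labels that implication ``clear,'' so no content is lost; you have just spelled it out. The only small redundancy is that once you have $(3) \Rightarrow (1)$ you do not strictly need the separate $(1) \Rightarrow (4)$ and $(4) \Rightarrow (1)$ arguments to prove the lemma as a whole if you instead show $(4) \Rightarrow (1)$ and $(1) \Rightarrow \cdots \Rightarrow (3) \Rightarrow (4)$ via contrapositive; but having $(1) \Leftrightarrow (4)$ spelled out explicitly is perfectly fine and arguably clearer.
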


\begin{proof}
$(4)\Rightarrow (1) \Rightarrow (2) \Rightarrow (3)$ are clear.
For $(3)\Rightarrow (4)$, assume that $M$ is a minimal syzygy. Consider the sequence $0 \to M \to R^n\to X \to 0$ with $n=\mu(X)$. If $M$ is free then $X$ has finite projective dimension, hence free, impossible.  Thus $M$ is not free, so it is a part of an infinite minimal resolution. Therefore we have $M \subseteq \m R^{n}$ for some $n$, so $\Soc(R)M=0$.   

\end{proof}

\begin{lem}\label{pushouts}
Let $0\rightarrow X\xrightarrow{i_1} Y\xrightarrow{p_1} Z\rightarrow 0$ be an exact sequence.
\begin{enumerate}
\item If  $Y \in \Omega \Deep(R)$ and $Z \in \Deep(R)$ then $X \in \Omega \Deep(R)$.

\item If $X \in \DF(R)$ and $Z \in \Deep(R)$ then $Y \in \DF(R)$.
\end{enumerate}
\end{lem}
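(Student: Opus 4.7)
The plan is to handle both parts by the same \emph{snake-lemma-style} construction: given an injection $B\hookrightarrow D$ and a subobject $A\subseteq B$, the composition $A\hookrightarrow D$ has cokernel $D/A$ fitting in the short exact sequence $0\to B/A\to D/A\to D/B\to 0$. Once this is in place, the depth lemma (for $0\to U\to V\to W\to 0$, $\depth V\geq\min\{\depth U,\depth W\}$) finishes the argument, since we only need the middle term to have depth $\geq t$.

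For (1), I would start with a presenting sequence $0\to Y\to R^n\to W\to 0$ with $W\in\Deep(R)$, which exists because $Y\in\Omega\Deep(R)$. Composing the given injection $X\hookrightarrow Y$ with $Y\hookrightarrow R^n$ yields an injection $X\hookrightarrow R^n$; let $V$ be its cokernel. The filtration $X\subseteq Y\subseteq R^n$ produces the exact sequence
\[
0\longrightarrow Y/X\longrightarrow R^n/X\longrightarrow R^n/Y\longrightarrow 0,
\]
i.e., $0\to Z\to V\to W\to 0$. Since $\depth Z\geq t$ and $\depth W\geq t$, the depth lemma gives $\depth V\geq t$, so $V\in\Deep(R)$. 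The sequence $0\to X\to R^n\to V\to 0$ then witnesses $X\in\Omega\Deep(R)$.

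For (2), I would start with a presenting sequence $0\to R\to X^n\to W\to 0$ with $W\in\Deep(R)$, which exists because $X\in\DF(R)$. Taking $n$ copies of $0\to X\to Y\to Z\to 0$ gives the exact sequence $0\to X^n\to Y^n\to Z^n\to 0$. Composing $R\hookrightarrow X^n\hookrightarrow Y^n$ and letting $V$ be the cokernel of the composite, the same filtration argument (with $R\subseteq X^n\subseteq Y^n$) supplies an exact sequence
\[
0\longrightarrow X^n/R\longrightarrow Y^n/R\longrightarrow Y^n/X^n\longrightarrow 0,
\]
that is, $0\to W\to V\to Z^n\to 0$. Since $Z^n\in\Deep(R)$ (direct sums preserve depth) and $W\in\Deep(R)$, the depth lemma again yields $V\in\Deep(R)$. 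The sequence $0\to R\to Y^n\to V\to 0$ then witnesses $Y\in\DF(R)$.

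There is no real obstacle: the only subtlety is making sure the snake-style short exact sequence for the cokernel of the composite is correctly identified, and that one invokes the correct inequality from the depth lemma (the one for the middle term, not the sub- or quotient term). Both checks are routine.
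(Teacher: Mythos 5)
Your proof is correct and is essentially the paper's proof in different language: the cokernel $V$ of the composite injection is precisely the pushout $P$ the paper constructs, and the short exact sequence $0\to Z\to V\to W\to 0$ (resp.\ $0\to W\to V\to Z^n\to 0$) you extract from the filtration is exactly the right-hand column of the paper's pushout diagram. Both arguments then close with the same depth-lemma estimate on the middle term.
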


\begin{proof}

\begin{enumerate}

\item Since $Y \in \Omega \Deep(R)$, there is an exact sequence of the form $0 \rightarrow Y \xrightarrow{i_2} R^n \xrightarrow{p_2} C \rightarrow 0$, where $C \in \Deep(R)$.  Letting $P$ be the pushout along $p_1$ and $i_2$, we have the following pushout diagram with exact rows and columns: 
\[\begin{tikzpicture}[descr/.style={fill=white,inner sep=1.5pt}]
       \matrix (m) [
            matrix of math nodes,
            row sep=2em,
            column sep=2em,
            text height=1.5ex, text depth=0.25ex
        ]
        {\mbox{} & 0 & 0 & 0 & \mbox{} \\ 0 & X & Y & Z & 0 \\ 0 & X & R^n & P & 0 \\ 0 & 0 & C & C & 0 \\ \mbox{} & 0 & 0 & 0 & \mbox{}\\ };
         \path[overlay,->, font=\scriptsize,>=latex]
        (m-1-2) edge (m-2-2)
        (m-2-1) edge (m-2-2)
        (m-3-1) edge (m-3-2)
        (m-4-1) edge (m-4-2)
        (m-4-2) edge (m-5-2)
        (m-1-3) edge (m-2-3)
        (m-1-4) edge (m-2-4)
        (m-4-3) edge (m-5-3)
        (m-4-4) edge (m-5-4)
        (m-2-4) edge (m-2-5)
        (m-3-4) edge (m-3-5)
        (m-4-4) edge (m-4-5)
        (m-4-2) edge (m-4-3)
        (m-3-2) edge (m-4-2)
        (m-2-2) edge node[above]{$i_1$} (m-2-3)
        (m-2-3) edge node[above]{$p_1$} (m-2-4)
        (m-3-2) edge node[above]{$i_2 \circ i_1$} (m-3-3)
        (m-3-3) edge (m-3-4)
        (m-2-3) edge node[right]{$i_2$} (m-3-3)
        (m-3-3) edge node[right]{$p_2$} (m-4-3)
        (m-2-4) edge (m-3-4)
        (m-3-4) edge (m-4-4);
        \path[overlay,=,font=\scriptsize,>=latex]
        (m-2-2) edge[double,thick] (m-3-2)
        (m-4-3) edge[double,thick] (m-4-4);
        \end{tikzpicture}\]

Since $Z,C \in \Deep(R)$, it follows that $P \in \Deep(R)$ which shows that $X \in \Omega \Deep(R)$.

\item Since $X \in \DF(R)$, there is an exact sequence of the form $0 \rightarrow R \xrightarrow{i_2} X^n \xrightarrow{p_2} C \rightarrow 0$ for some $n$.  We also have the exact sequence $0 \rightarrow X^n \xrightarrow{i_1^n} Y^n \xrightarrow{p_1^n} Z^n \rightarrow 0$.  Letting $P$ be the pushout along $p_1^n$ and $i$, we have the following pushout diagram with exact rows and columns:
\[\begin{tikzpicture}[descr/.style={fill=white,inner sep=1.5pt}]
       \matrix (m) [
            matrix of math nodes,
            row sep=2em,
            column sep=2em,
            text height=1.5ex, text depth=0.25ex
        ]
        {\mbox{} & 0 & 0 & 0 & \mbox{} \\ 0 & R & X^n & C & 0 \\ 0 & R & Y^n & P & 0 \\ 0 & 0 & Z^n & Z^n & 0 \\ \mbox{} & 0 & 0 & 0 & \mbox{}\\ };
         \path[overlay,->, font=\scriptsize,>=latex]
        (m-1-2) edge (m-2-2)
        (m-2-1) edge (m-2-2)
        (m-3-1) edge (m-3-2)
        (m-4-1) edge (m-4-2)
        (m-4-2) edge (m-5-2)
        (m-1-3) edge (m-2-3)
        (m-1-4) edge (m-2-4)
        (m-4-3) edge (m-5-3)
        (m-4-4) edge (m-5-4)
        (m-2-4) edge (m-2-5)
        (m-3-4) edge (m-3-5)
        (m-4-4) edge (m-4-5)
        (m-4-2) edge (m-4-3)
        (m-3-2) edge (m-4-2)
        (m-2-2) edge node[above]{$i_2$} (m-2-3)
        (m-2-3) edge node[above]{$p_2$} (m-2-4)
        (m-3-2) edge node[above]{$i_1^n \circ i_2$} (m-3-3)
        (m-3-3) edge (m-3-4)
        (m-2-3) edge node[right]{$i_1^n$} (m-3-3)
        (m-3-3) edge node[right]{$p_1^n$} (m-4-3)
        (m-2-4) edge (m-3-4)
        (m-3-4) edge (m-4-4);
        \path[overlay,=,font=\scriptsize,>=latex]
        (m-2-2) edge[double,thick] (m-3-2)
        (m-4-3) edge[double,thick] (m-4-4);
        \end{tikzpicture}\]

But then $P \in \Deep(R)$ since $C,Z^n \in \Deep(R)$, and thus $Y \in \DF(R)$, as desired.

\end{enumerate}

\end{proof}

\begin{lem}\label{deepinjsurj}
Assume that $\depth(R)=0$, $M \in \Omega \Deep(R)$, and $N \in \DF(R)$. If $M \twoheadrightarrow N$ (resp. $N \hookrightarrow M$), then $R \mid M$ (resp. $R\mid M$ and $R\mid N$).
\end{lem}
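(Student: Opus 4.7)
The key observation is that when $\depth(R) = 0$, the category $\Deep(R)$ is all of $\mod(R)$, so $\Omega\Deep(R) = \Omega\mod(R)$ is just the class of first syzygies, and $\DF(R)$ coincides with the class of faithful modules (this is Remark 2.1(3)). Thus the whole statement can be reduced to an application of Lemma \ref{minsyz}, which, for a nonzero first syzygy, equates having $R$ as a summand with being faithful.

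First I would observe that $N\ne 0$: since $N\in \DF(R)$ there is an injection $R\hookrightarrow N^n$, so $N=0$ would force $R=0$, contradicting $\depth(R)=0$. In the surjection case $M\twoheadrightarrow N$, any $r\in\Ann(M)$ kills the quotient $N$, giving $\Ann(M)\subseteq \Ann(N)=0$, so $M$ is a nonzero faithful module in $\Omega\mod(R)$; Lemma \ref{minsyz} then yields $R\mid M$.

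For the injection case $N\hookrightarrow M$, the same inclusion $\Ann(M)\subseteq \Ann(N)=0$ now uses that an annihilator of $M$ kills every submodule, so $M$ is again faithful and nonzero, and Lemma \ref{minsyz} gives $R\mid M$. The additional conclusion $R\mid N$ will follow from the key structural observation that first syzygies are closed under taking submodules: since $M\in\Omega\mod(R)$ there is an embedding $M\hookrightarrow R^k$, and composing with $N\hookrightarrow M$ yields $N\hookrightarrow R^k$, which exhibits $N$ as the first syzygy in $0\to N\to R^k\to R^k/N\to 0$. Since $R^k/N\in \mod(R)=\Deep(R)$, we conclude $N\in\Omega\Deep(R)$, and as $N$ is also nonzero and faithful, one more application of Lemma \ref{minsyz} gives $R\mid N$.

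There is really no obstacle to speak of here; the only point that requires a moment of care is recognizing that membership in $\Omega\mod(R)$ passes to submodules, which is what makes the injection hypothesis strong enough to upgrade the conclusion from $R\mid M$ to the pair $R\mid M$ and $R\mid N$. The surjection case does not admit such an upgrade because quotients of first syzygies need not be first syzygies, which is consistent with the asymmetric statement of the lemma.
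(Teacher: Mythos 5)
Your proof is correct and takes essentially the same route as the paper: both reduce to Lemma \ref{minsyz} after observing $\Deep(R)=\mod(R)$ when $\depth R=0$, and both handle the injection case by noting that $N\hookrightarrow M\hookrightarrow R^k$ places $N$ in $\Omega\Deep(R)$. The only cosmetic difference is that for the surjection case the paper argues by contradiction via the socle criterion (condition (3) of Lemma \ref{minsyz}) while you argue directly via faithfulness (condition (2)); these are interchangeable by that lemma.
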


\begin{proof}
First, assume that $M \twoheadrightarrow N$. As $M\in \Omega \Deep(R)$, if $R \nmid M$ then $\Soc (R)M=0$ by Lemma \ref{minsyz}. 
 %Let ${x_1,...,x_m}$ be a minimal generator of $M$. There is an $l \times m$ matrix $A=[a_{ij}]$ over $R$ such that we can identify with $M$. If $M$ is not a minimal syzygy of $X$ we can assume that $a_{11} \notin \m $. Thus we may reduce $A$ to the form 

%\[\left[ {\begin{array}{*{20}{c}}
%1&0&{...}&0\\
%0&{}&{}&{}\\
%\vdots &{}&{A'}&{}\\
%0&{}&{}&{}
%\end{array}} \right]\]
%which means $M$ has a free summand, a contradiction. 
Since $M \twoheadrightarrow N$, $\Soc(R)N=0$. But $N$ is a nonzero faithful module which is a contradiction. Hence $R \mid M$. 

Now assume that $N\hookrightarrow M$. Then $M,N$ are both faithful and in $\Omega \Deep(R)$, so they have a free summand by Lemma \ref{minsyz}.
\end{proof}

\begin{prop}\label{homcutdown}
 Suppose that $M,N \in \Deep(R)$ and $\Ext ^{1 \leqslant i \leqslant t-1}_{R}(M,N)=0$. Then for any general regular sequence $\underline{x}$ of length $n\leq t$ we have $\overline{\Hom_{R}(M,N)} \cong \Hom_{\overline{R}} (\overline{M},\overline{N}) $ when $n<t$, and $\overline{\Hom_{R}(M,N)} \hookrightarrow \Hom_{\overline{R}} (\overline{M},\overline{N}) $ when $n=t$. 

\end{prop}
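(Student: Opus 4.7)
The plan is to induct on $n$. The base case $n=1$ does all the real work. For a general element $x_1\in \m$, which by Remark \ref{general} can be chosen regular on $R$, $M$, and $N$ simultaneously, applying $\Hom_R(M,-)$ to $0\to N\xrightarrow{x_1} N \to N/x_1N\to 0$ gives the exact sequence
\[ 0 \to \Hom_R(M,N)/x_1\Hom_R(M,N) \to \Hom_R(M, N/x_1N) \to \Ext^1_R(M,N).\]
When $t\geq 2$ the rightmost term vanishes by hypothesis and the first map is an isomorphism; when $t=1$ one obtains only the left-hand injection. In either case, since $x_1$ annihilates $N/x_1N$, every $R$-linear map $M\to N/x_1N$ factors through $M/x_1M$ and is automatically $R/x_1R$-linear, giving $\Hom_R(M,N/x_1N)\cong \Hom_{R/x_1R}(M/x_1M, N/x_1N)$. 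This settles $n=1$.

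For $n\geq 2$, note that $n\leq t$ forces $t\geq 2$, so the $n=1$ result supplies the isomorphism after modding by $x_1$, and it then suffices to apply the inductive hypothesis on $R/x_1R$ using the sequence $x_2,\ldots,x_n$. For this I verify: (a) $R/x_1R$ has depth $t-1$; (b) $M/x_1M,\, N/x_1N\in \Deep(R/x_1R)$, because $x_1$ is regular on both; (c) $\Ext^i_{R/x_1R}(M/x_1M, N/x_1N)=0$ for $1\leq i\leq t-2$; (d) $x_2,\ldots,x_n$ is a general regular sequence of length $n-1\leq t-1$ on $R/x_1R$. Parts (a), (b), (d) are immediate. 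For (c), the same Ext long exact sequence used above, together with the assumed vanishing of $\Ext^i_R(M,N)$ for $1\leq i\leq t-1$, forces $\Ext^i_R(M, N/x_1N)=0$ for $1\leq i\leq t-2$; and since $x_1$ is $M$-regular (as $\depth M\geq t\geq 2$), any free resolution $F_\bullet$ of $M$ reduces mod $x_1$ to a free resolution of $M/x_1M$ over $R/x_1R$, producing the identification $\Ext^i_R(M, N/x_1N)\cong \Ext^i_{R/x_1R}(M/x_1M, N/x_1N)$. The inductive hypothesis then closes the argument: when $n<t$ the condition $n-1<t-1$ holds and every stage is an isomorphism, while when $n=t$ only the final stage $n-1=t-1$ degrades to an injection.

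The one delicate point is ensuring that the Ext-vanishing hypothesis survives the inductive descent, which is exactly the change-of-rings identification in (c). The regularity of $x_1$ on $M$, guaranteed by $M\in \Deep(R)$ together with $t\geq 2$, is essential here---without it, the mod-$x_1$ reduction of a free resolution would fail to remain a resolution and one would be pushed into a heavier change-of-rings spectral sequence argument. Everything else is homological bookkeeping packaged into the two long exact sequences attached to multiplication by $x_1$ on $N$.
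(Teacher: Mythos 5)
Your proof is correct and follows essentially the same standard inductive argument the paper invokes (the paper's proof is a one-line sketch of exactly this). The only cosmetic difference is that you apply $\Hom_R(M,-)$ to the sequence $0 \to N \xrightarrow{x} N \to N/xN \to 0$ rather than $\Hom_R(-,N)$ to the corresponding sequence for $M$; both are dual forms of the same change-of-rings bookkeeping and yield the same conclusion.
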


\begin{proof}
This is standard argument. Apply $\Hom_R(-,N)$ to the short exact sequence
$$0 \to M \to M\to M/xM \to 0$$
and proceed by induction. 
\end{proof}

\begin{lem}\label{minsyzcutdown}
If $M \in \Omega_{\min}\Deep(R)$, then for a general regular sequence $\underline{x} \subseteq \m$, $M/\underline{x}M \in \Omega_{\min} \Deep(R/\underline{x}R)$.
\end{lem}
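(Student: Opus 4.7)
The plan is to tensor the given minimal-syzygy realization with $\overline{R}=R/\underline{x}R$ and check that all three defining features of $\Omega_{\min}\Deep$ survive. Concretely, start from an exact sequence
$$0 \to M \to R^n \to X \to 0$$
with $X \in \Deep(R)$ and $n = \mu_R(X)$. By prime avoidance as in Remark \ref{general}, I can pick the regular sequence $\underline{x}$ of length $k \leq t$ to be regular on both $R$ and $X$; this is exactly what ``general'' is buying.

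Since $\underline{x}$ is $X$-regular, an easy induction on $k$ (using the Koszul resolution of $R/\underline{x}R$) gives $\Tor_1^R(X, R/\underline{x}R) = 0$. Tensoring the exact sequence above with $\overline{R}$ therefore remains exact:
$$0 \to M/\underline{x}M \to \overline{R}^{\,n} \to X/\underline{x}X \to 0.$$

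Now I verify the two properties of $X/\underline{x}X$ needed to conclude $M/\underline{x}M \in \Omega_{\min}\Deep(\overline{R})$. First, since $\underline{x}$ is regular on $X$, $\depth_{\overline{R}}(X/\underline{x}X) = \depth_R X - k \geq t - k = \depth \overline{R}$, so $X/\underline{x}X \in \Deep(\overline{R})$. Second, $\underline{x} \subseteq \m$, so $(X/\underline{x}X) \otimes_{\overline{R}} (\overline{R}/\overline{\m}) \cong X \otimes_R (R/\m)$, which gives $\mu_{\overline{R}}(X/\underline{x}X) = \mu_R(X) = n$. Hence the displayed sequence exhibits $M/\underline{x}M$ as a minimal syzygy in $\Omega\Deep(\overline{R})$.

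There is no real obstacle here; the only subtle point is recognizing that ``general'' must be strong enough to make $\underline{x}$ regular on $X$ (not merely on $R$), after which minimality of $\mu$ passes to $\overline{R}$ automatically because $\underline{x}$ lies in $\m$.
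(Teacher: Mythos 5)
Your proof is correct and takes essentially the same route as the paper's (very terse) argument: pick a general regular sequence on $X$, pass the defining exact sequence through $-\otimes_R \overline{R}$, and observe that both $\depth$ and $\mu$ behave as needed for $X/\underline{x}X$. You have simply spelled out the Tor-vanishing and the two verifications that the paper leaves implicit.
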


\begin{proof}

Since $M$ is a minimal syzygy, there is an exact sequence $0 \to M \to R^{\mu(X)} \to X \to 0$ with $X \in \Deep(R)$.  Then any regular sequence $\underline{x}$ on $X$ suffices.

\end{proof}

\begin{lem}\label{summandcutdown}

If $M \in \Omega \Deep(R)$, then $R/\underline{x}R \mid M/\underline{x}M$ if and only if $R \mid M$, for a general regular sequence $\underline{x}$.
\end{lem}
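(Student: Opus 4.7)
The forward implication is immediate: if $M\cong R\oplus M'$, then tensoring yields $M/\underline{x}M\cong R/\underline{x}R\oplus M'/\underline{x}M'$. My plan for the converse is to reduce everything to the depth-zero situation, where Lemma \ref{minsyz} identifies free summands with the failure of being a minimal syzygy.

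First I would extend $\underline{x}$ to a maximal regular sequence $\underline{z}=\underline{x}\cup\underline{y}$ of length $t$ on $R$, choosing $\underline{y}$ general on $R/\underline{x}R$ so that the lemmas of this section all apply along $\underline{z}$; this is a standard prime-avoidance step in the spirit of Remark \ref{general}. Set $\bar{R}:=R/\underline{z}R$, which has depth zero, and iterate Lemma \ref{deepcutdown} to obtain $\bar{M}:=M/\underline{z}M\in\Omega\Deep(\bar{R})$. Since further base change preserves the summand relation, the hypothesis $R/\underline{x}R\mid M/\underline{x}M$ upgrades to $\bar{R}\mid\bar{M}$.

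Next I would apply Lemma \ref{minsyz} over the depth-zero ring $\bar{R}$: the summand condition $\bar{R}\mid\bar{M}$ forces $\bar{M}\notin\Omega_{\min}\Deep(\bar{R})$ (by the equivalence $(1)\Leftrightarrow(4)$). Then via the contrapositive of Lemma \ref{minsyzcutdown}, applied iteratively along $\underline{z}$, I would pull this back to $M\notin\Omega_{\min}\Deep(R)$. Finally, I pick any presentation $0\to M\to R^n\to X\to 0$ with $X\in\Deep(R)$ witnessing $M\in\Omega\Deep(R)$; since $M$ fails to be a minimal syzygy of this particular $X$, we must have $n>\mu(X)$, and a Nakayama-style change of basis on $R^n$ factors the surjection $R^n\to X$ through a minimal surjection $R^{\mu(X)}\to X$. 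This produces a decomposition $M\cong R^{n-\mu(X)}\oplus\ker(R^{\mu(X)}\to X)$ with a positive-rank free summand, so $R\mid M$.

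The only real point of technical care is arranging compatible genericity for $\underline{z}$ so that both Lemma \ref{deepcutdown} and Lemma \ref{minsyzcutdown} apply along it; this is routine by prime avoidance. The conceptual engine is then very clean: both conditions ``$R\mid M$'' and ``$R/\underline{x}R\mid M/\underline{x}M$'' are detected after full cutdown by the depth-zero dichotomy in Lemma \ref{minsyz}, and the cutdown lemmas \ref{deepcutdown} and \ref{minsyzcutdown} serve merely as the bridge between the various intermediate quotient rings.
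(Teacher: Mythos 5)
Your argument is correct and follows essentially the same line as the paper's: cut down, use the dichotomy in Lemma \ref{minsyz}, pull the failure of minimality back via the contrapositive of Lemma \ref{minsyzcutdown}, and conclude a free summand. The paper's own proof is terser (it skips the explicit further cutdown to depth zero and the explicit Nakayama change-of-basis, appealing directly to the minimal-syzygy characterization), but the underlying mechanism is identical; your version merely makes explicit the two facts the paper leaves implicit — that the implication ``free summand $\Rightarrow$ not a minimal syzygy'' is verified after passing to depth zero, and that ``not a minimal syzygy $\Rightarrow$ free summand'' comes from the standard factorization of a non-minimal surjection through a minimal one.
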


\begin{proof}

One direction is clear.  If $\overline{R} \mid \overline{M}$, then $\overline{M}$ is not a minimal syzygy in $\Omega \Deep(\overline{R})$. Therefore $M$ is not a minimal syzygy in $\Omega \Deep(R) $ which implies $R \mid M$.

\end{proof}

\begin{lem}\label{intersectfreesummand}
If $M\in \Omega \Deep(R) \cap \DF(R)$ then $R \mid M$.
\end{lem}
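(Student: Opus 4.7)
The plan is to induct on the depth $t = \depth(R)$ and reduce to the easy base case at depth zero, where Lemma \ref{minsyz} directly gives the result.

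For the base case $t = 0$, since $M \in \DF(R)$, $M$ is faithful (this is explicit in the remark after the definitions, as faithful and deeply faithful coincide when $\depth(R) = 0$). Because $M \in \Omega\Deep(R) = \Omega\mod(R)$ in this setting, the implication $(2) \Rightarrow (1)$ of Lemma \ref{minsyz} immediately yields $R \mid M$.

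For the inductive step $t > 0$, I would pick a general regular element $x \in \m$ that simultaneously works with respect to the two short exact sequences witnessing $M \in \Omega\Deep(R)$ and $M \in \DF(R)$ (prime avoidance applies to finitely many modules per Remark \ref{general}). Applying Lemma \ref{deepcutdown} to each witnessing sequence, we get $\overline{M} \in \Omega\Deep(\overline{R}) \cap \DF(\overline{R})$, where $\overline{(-)} = - \otimes_R R/xR$ and $\depth(\overline{R}) = t - 1$. By the inductive hypothesis applied over $\overline{R}$, we conclude $\overline{R} \mid \overline{M}$. Finally, Lemma \ref{summandcutdown} (whose hypothesis $M \in \Omega\Deep(R)$ is exactly what we have) lifts this summand back to conclude $R \mid M$.

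The main obstacle, and really the only nontrivial content, is the depth-zero base case; once that is in place the inductive machinery built up in this section (Lemmas \ref{deepcutdown} and \ref{summandcutdown}) does essentially all the work. I would expect the write-up to be very short, essentially a two-line argument per case.
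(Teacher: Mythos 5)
Your proof is correct and follows essentially the same path as the paper's: reduce to the depth-zero case via Lemma \ref{deepcutdown}, invoke Lemma \ref{minsyz} there (using that a deeply faithful module is faithful), and lift the free summand back with Lemma \ref{summandcutdown}. The only cosmetic difference is that you cut down one regular element at a time and induct, whereas the paper cuts down by a full general regular sequence of length $t$ in one step.
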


\begin{proof}
For $t=0$ the result follows from Proposition \ref{minsyz}. By Lemma \ref{deepcutdown} we may find a regular sequence $\underline{x}$ so that $M/\underline{x}M \in \Omega \Deep(R/\underline{x}R) \cap \DF(R/\underline{x}R)$. So $R/\underline{x}R \mid M/\underline{x}M$ which implies $R \mid M$ by Lemma \ref{summandcutdown}.
\end{proof}

\begin{theorem}\label{corintersectfreesummand}
Suppose that $M \in \Omega \Deep(R)$ and $N \in \DF(R)$.
\begin{enumerate}
\item If $M \twoheadrightarrow N$ then $R \mid M$.
\item  If there is an exact sequence $0 \rightarrow N \rightarrow M \rightarrow X \rightarrow 0$ such that $X \in \Deep(R)$ then $R\mid M$.
\end{enumerate}

\end{theorem}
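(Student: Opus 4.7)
The plan is to deduce both parts almost mechanically from the lemmas built up in this section, since the categories $\Omega\Deep(R)$ and $\DF(R)$ have been engineered precisely to support this style of reduction.

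For part (2), I would proceed by a one-step pushout argument. Applying Lemma~\ref{pushouts}(2) to the given short exact sequence $0 \to N \to M \to X \to 0$---identifying $N$ with the ``$X$'' of that lemma and $X$ with its ``$Z$''---the hypotheses $N \in \DF(R)$ and $X \in \Deep(R)$ immediately give $M \in \DF(R)$. Combined with the standing assumption $M \in \Omega\Deep(R)$, Lemma~\ref{intersectfreesummand} then yields $R \mid M$.

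For part (1), the strategy is to reduce to the depth-zero case, where the statement is exactly the surjective half of Lemma~\ref{deepinjsurj}. When $t = 0$ that lemma applies directly. When $t > 0$, I would choose a general regular sequence $\underline{x}$ of length $t$---general enough, in the sense of Remark~\ref{general}, that Lemma~\ref{deepcutdown} simultaneously delivers $\overline{M} \in \Omega\Deep(\overline{R})$ and $\overline{N} \in \DF(\overline{R})$, where $\overline{(-)} := - \otimes_R R/\underline{x}R$. Right exactness of tensor product transports the surjection $M \twoheadrightarrow N$ to a surjection $\overline{M} \twoheadrightarrow \overline{N}$. Since $\depth(\overline{R}) = 0$, Lemma~\ref{deepinjsurj} now gives $\overline{R} \mid \overline{M}$, and Lemma~\ref{summandcutdown} lifts this back to $R \mid M$.

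The only real ``obstacle'' is the bookkeeping around the word \emph{general} in part (1): the chosen $\underline{x}$ must be simultaneously regular on $R$, on the deep module witnessing $M \in \Omega\Deep(R)$, and on the deep module witnessing $N \in \DF(R)$, so that all three invocations of earlier lemmas go through on the same sequence. This is precisely what Remark~\ref{general} guarantees by prime avoidance, so no new work is required. Beyond that, I do not foresee any substantive difficulty---the theorem is essentially a corollary of the categorical machinery assembled earlier in the section.
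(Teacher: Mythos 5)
Your proposal is correct, and both parts compile into valid proofs from the stated lemmas. For part (1) you follow essentially the same route as the paper: cut down by a general regular sequence (Lemma \ref{deepcutdown}), invoke the depth-zero result (your Lemma \ref{deepinjsurj}, which the paper instead phrases via faithfulness and Lemma \ref{intersectfreesummand}---substantively the same thing, since both rest on Lemma \ref{minsyz}), and lift back with Lemma \ref{summandcutdown}. For part (2) you take a genuinely different and, I think, cleaner route. The paper's terse proof (``cut down and appeal to \ref{summandcutdown} and \ref{intersectfreesummand}'') indicates that it reduces to depth zero and argues there that $\overline{M}$ is faithful because $\overline{N}$ is and $\overline{N} \hookrightarrow \overline{M}$. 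You instead apply Lemma \ref{pushouts}(2) directly at the original depth to the sequence $0 \to N \to M \to X \to 0$ (with $N$ playing the role of the lemma's $X$, $M$ playing $Y$, and $X$ playing $Z$), obtaining $M \in \DF(R)$ outright; Lemma \ref{intersectfreesummand} then finishes. This buys you a proof of part (2) with no explicit regular-sequence bookkeeping at all---that bookkeeping is encapsulated once inside Lemma \ref{intersectfreesummand}---and makes more visible why the extra hypothesis $X \in \Deep(R)$ is exactly what is needed: it is precisely the hypothesis of Lemma \ref{pushouts}(2). The paper's cut-down version is no shorter and has to re-verify that the short exact sequence stays exact modulo $\underline{x}$ (which again uses $X \in \Deep(R)$), so your version is the preferable write-up.
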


\begin{proof}
We cut down using a general regular sequence and appeal to \ref{summandcutdown} and \ref{intersectfreesummand}.
\end{proof}

\section{When does $\Hom_R(M,N)$ contain a free summand?}\label{MNfree}

For this section we retain the notation of Section \ref{deep}.

\begin{lem}\label{summandlem} 
Let $M,N$ be such that and $\Ext^{1 \le i \le t-1}(M,N)=0$.
\begin{enumerate}
\item Suppose $N \in \Omega \Deep(R)$. Then $\Hom_R(M,N) \in \Omega \Deep(R)$.  
\item Suppose $\Hom(M,N)\in \DF(R)$. Then $N \in \DF(R)$. 
\end{enumerate}
\begin{proof}

Consider part (1). Let 
\[\cdots  \rightarrow F_{1} \rightarrow F_{0} \rightarrow M\rightarrow 0\]
be a minimal free resolution of $M$. Since $\Ext^{1 \leqslant i \leqslant t-1}_R(M,N)=0$ we have the following exact sequence, given by applying $\Hom(-,N)$ to the resolution:

\[0\rightarrow \Hom_{R}(M,N) \rightarrow N^{l_0}\rightarrow N^{l_1} \rightarrow \cdots \rightarrow N^{l_{t}}\rightarrow C \rightarrow 0\]
Split this sequence into 
\[0\rightarrow \Hom_{R}(M,N) \rightarrow N^{l_0} \rightarrow X \rightarrow 0\]
and 
\[0\rightarrow X \rightarrow N^{l_1} \rightarrow \cdots \rightarrow N^{l_{t}} \rightarrow C \rightarrow 0\]
Since $N \in \Omega \Deep(R)$ it follows from the latter sequence that $X \in \Deep(R)$, and so $\Hom_R(M,N) \in \Omega \Deep(R)$, applying Lemma \ref{pushouts} to the former sequence.

Part (2) is proved similarly, also using Lemma \ref{pushouts}.
\end{proof}

\end{lem}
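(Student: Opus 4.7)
My plan is to use the $\Ext$-vanishing hypothesis to convert a free resolution of $M$ into a short exact sequence relating $\Hom_R(M,N)$ to $N^{l_0}$, and then invoke Lemma \ref{pushouts} to transfer $\Omega\Deep$ and $\DF$ properties back and forth. Concretely, take a minimal free resolution $\cdots \to F_1 \to F_0 \to M \to 0$ with $l_i = \rank F_i$, apply $\Hom_R(-,N)$, and use $\Ext^{1\leq i\leq t-1}_R(M,N)=0$ to extract the exact sequence
$$0 \to \Hom_R(M,N) \to N^{l_0} \to N^{l_1} \to \cdots \to N^{l_{t-1}}.$$
I would then truncate it into $0\to \Hom_R(M,N)\to N^{l_0}\to X\to 0$ (where $X$ is the image of the first differential) and $0\to X \to N^{l_1}\to \cdots \to N^{l_{t-1}}\to X_{t-1}\to 0$.

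The one nontrivial point is to verify $X\in \Deep(R)$. This is where the depth calculation happens: both hypotheses imply $N\in \Deep(R)$, so each $N^{l_i}$ has depth $\geq t$. Splitting the tail sequence into short exact pieces $0\to X_{i-1}\to N^{l_i}\to X_i \to 0$ with $X_0=X$, the depth lemma gives $\depth X_{i-1}\geq \min(t,\depth X_i+1)$. Iterating from the right (starting from no information on $X_{t-1}$) yields $\depth X_{t-i}\geq i$, hence $\depth X \geq t$. This is really the only place the $\Ext$-vanishing is used, and it is a routine but essential step.

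For part (1), the defining exact sequence $0\to N\to R^n\to Y\to 0$ with $Y\in \Deep(R)$ can be summed $l_0$ times to exhibit $N^{l_0}\in \Omega\Deep(R)$. Applying Lemma \ref{pushouts}(1) to $0\to \Hom_R(M,N)\to N^{l_0}\to X\to 0$, with $N^{l_0}\in \Omega\Deep(R)$ and $X\in \Deep(R)$, delivers $\Hom_R(M,N)\in \Omega\Deep(R)$.

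For part (2), Lemma \ref{pushouts}(2) applied to the same short exact sequence, with $\Hom_R(M,N)\in \DF(R)$ and $X\in \Deep(R)$, gives $N^{l_0}\in \DF(R)$. To finish, observe that any sequence $0\to R\to (N^{l_0})^k \to W\to 0$ witnessing $N^{l_0}\in \DF(R)$ is the same as $0\to R\to N^{k l_0}\to W\to 0$, so it witnesses $N\in \DF(R)$ directly. Thus both parts reduce, after the depth check, to a single application of the pushout lemma already established.
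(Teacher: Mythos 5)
Your overall plan matches the paper's: dualize a free resolution, use the $\Ext$-vanishing to get a long exact sequence in $N$'s, truncate, check depth, and feed the resulting short exact sequence into Lemma \ref{pushouts}. But the depth chase as written is off by one, and this is a genuine gap.

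You form the exact sequence $0 \to \Hom_R(M,N) \to N^{l_0} \to \cdots \to N^{l_{t-1}}$ and split off the tail $0 \to X \to N^{l_1} \to \cdots \to N^{l_{t-1}} \to X_{t-1} \to 0$. This tail has only $t-1$ middle terms $N^{l_1},\dots,N^{l_{t-1}}$, hence only $t-1$ short exact pieces. Starting from $\depth X_{t-1} \ge 0$, the depth lemma then gives $\depth X_{t-2} \ge \min(t,1) = 1$, \dots, and finally $\depth X_0 \ge t-1$, not $t$. So your claimed estimate $\depth X_{t-i} \ge i$ (which you need at $i=t$) does not follow; you get one less. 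Concretely, for $t=1$ your tail is empty and you have no bound on $\depth X$ at all, whereas $X \in \Deep(R)$ requires $\depth X \ge 1$.

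The fix, which is what the paper does, is to carry the complex one step further: take the sequence $0 \to \Hom_R(M,N) \to N^{l_0} \to \cdots \to N^{l_t} \to C \to 0$ where $C = \coker(N^{l_{t-1}} \to N^{l_t})$. Exactness at $N^{l_1},\dots,N^{l_{t-1}}$ comes from $\Ext^{1\le i\le t-1}_R(M,N)=0$, and exactness at $N^{l_t}$ is automatic by the definition of $C$ --- no extra vanishing is needed. Now the tail $0 \to X \to N^{l_1} \to \cdots \to N^{l_t} \to C \to 0$ has $t$ middle terms, the depth chase runs $t$ steps from $\depth C \ge 0$ to $\depth X \ge t$, and the rest of your argument (summing the defining sequence for $N$ to handle $N^{l_0}$, the pushout lemma in both directions, and the observation that $N^{l_0} \in \DF(R)$ gives $N \in \DF(R)$ by reading $(N^{l_0})^k = N^{k l_0}$) goes through exactly as you wrote it.

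One smaller point worth flagging: in part (2) you assert that the hypothesis $\Hom_R(M,N) \in \DF(R)$ implies $N \in \Deep(R)$, so that the $N^{l_i}$ have depth $\ge t$. This is not automatic (e.g., $N$ faithful does not force $\depth N \ge t$), and you should either justify it or note that the intended reading of the lemma includes a standing depth assumption on $M$ and $N$ (the truncated phrase ``Let $M,N$ be such that and \dots'' in the statement suggests such a hypothesis was meant).
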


\begin{lem}\label{dualfreeisfree}

Suppose $t=0$ and $R \mid M^*$.  Then $R \mid M$. 

\begin{proof}

Take a minimal presentation $R^m \xrightarrow{A} R^n \rightarrow M \rightarrow 0$ of $M$.  This sequence induces an exact sequence of the form 

\[0 \rightarrow M^* \rightarrow R^n \xrightarrow{A^T} R^m \rightarrow \Tr M \rightarrow 0,\]
where $\Tr M$ denotes the Auslander transpose of $M$.  Setting $l=\mu(M^*)$, we have another exact sequence
\[R^l \xrightarrow{B} R^n \xrightarrow{A^T} R^m \rightarrow \Tr M \rightarrow 0\]
Since $R \mid M^*$, by Lemma \ref{minsyz}, $M^*$ is not a minimal syzygy, and so $B$ must contain a unit.  Thus there are invertible matrices $P$ and $Q$ so that $QBP^{-1}$ has the block form $\begin{pmatrix} 1 & 0 \\ 0 & B' \end{pmatrix}$.
But this gives rise to a chain isomorphism:

\[\begin{tikzpicture}[descr/.style={fill=white,inner sep=1.5pt}]
       \matrix (m) [
            matrix of math nodes,
            row sep=2.5em,
            column sep=4em,
            text height=1.5ex, text depth=0.25ex
        ]
        {R^l & R^n & R^m & \Tr M & 0 \\ R^l & R^n & R^m & \Tr M & 0 \\ };
         \path[overlay,->, font=\scriptsize,>=latex]
        (m-1-1) edge node[above]{$B$} (m-1-2)
        (m-1-2) edge node[above]{$A^T$} (m-1-3)
        (m-1-3) edge (m-1-4)
        (m-1-4) edge (m-1-5)
        (m-2-1) edge node[above]{$QBP^{-1}$} (m-2-2)
        (m-2-2) edge node[above]{$A^TQ^{-1}$} (m-2-3)
        (m-2-3) edge (m-2-4)
        (m-2-4) edge (m-2-5)
        (m-1-1) edge node[right]{$P$} (m-2-1)
        (m-1-2) edge node[right]{$Q$} (m-2-2);
        \path[overlay,-,font=\scriptsize,>=latex]
        (m-1-3) edge[double,thick] (m-2-3)
        (m-1-4) edge[double,thick] (m-2-4);
         \end{tikzpicture}\]

Since $QBP^{-1}$ has the form $\begin{pmatrix} 1 & 0 \\ 0 & B' \end{pmatrix}$ and since $A^TQ^{-1} \cdot QBP^{-1}=0$, it must be that $A^TQ^{-1}$ has a column of all $0$'s.  Hence $(Q^{T})^{-1}A$ has a row of all $0$'s.  But this implies that $R \mid \coker((Q^T)^{-1}A) \cong \coker(A)=M$, as desired.

\end{proof}

\end{lem}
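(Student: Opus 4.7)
The plan is to pass to the Auslander transpose of $M$ and extract a free summand via matrix reduction, using Lemma \ref{minsyz} to locate a unit in a presentation matrix of $M^*$. First I would take a minimal free presentation $R^m \xrightarrow{A} R^n \to M \to 0$ and dualize to obtain the four-term exact sequence
\[0 \to M^* \to R^n \xrightarrow{A^T} R^m \to \Tr M \to 0.\]
Choosing a minimal surjection $R^l \twoheadrightarrow M^*$ (with $l = \mu(M^*)$) and composing with the inclusion $M^* \hookrightarrow R^n$ gives a matrix $B: R^l \to R^n$ whose image is $M^*$, and in particular $A^T B = 0$.

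Next I would produce a unit entry in $B$. Since $t = 0$, $\Deep(R) = \mod(R)$, so $\im(A^T) \subseteq R^m$ automatically lies in $\Deep(R)$, and the exact sequence $0 \to M^* \to R^n \to \im(A^T) \to 0$ witnesses $M^* \in \Omega\Deep(R)$. From the hypothesis $R \mid M^*$ and Lemma \ref{minsyz} (implication $(1) \Rightarrow (4)$), $M^*$ is not a minimal syzygy; were $M^* \subseteq \m R^n$, the surjection $R^n \twoheadrightarrow \im(A^T)$ would be minimal (so $n = \mu(\im(A^T))$), placing $M^* \in \Omega_{\min}\Deep(R)$, a contradiction. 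Hence $M^* \not\subseteq \m R^n$, and since the columns of $B$ generate $M^*$, at least one entry of $B$ must be a unit.

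Finally I would close out with a Smith-normal-form style reduction. Using the unit entry, I would find invertible matrices $Q,P$ with $QBP^{-1} = \begin{pmatrix} 1 & 0 \\ 0 & B' \end{pmatrix}$. Substituting into $A^T B = 0$ yields $(A^T Q^{-1})(QBP^{-1}) = 0$, which forces the column of $A^T Q^{-1}$ corresponding to the ``$1$'' block to vanish. Transposing gives a zero row in $(Q^T)^{-1}A$, and because $(Q^T)^{-1}$ is an automorphism of $R^n$, the matrix $(Q^T)^{-1}A$ still presents $M$. A zero row in a presentation matrix directly splits off a free summand $R$ of the cokernel, giving $R \mid M$.

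The main obstacle, I expect, is the bridge from the abstract containment $R \mid M^*$ to the concrete matrix statement that $B$ has a unit entry. The argument hinges on recognizing that dualization supplies a concrete embedding of $M^*$ into $R^n$ as a syzygy of something in $\Deep(R)$, and on the crucial role of $t = 0$: for higher depth one would need to verify that $\im(A^T) \in \Deep(R)$, which can fail, but at depth zero this is automatic. Once the unit entry is identified, everything else is routine linear algebra, with the only subtlety being to track that changes of basis on the target preserve the isomorphism class of $M$.
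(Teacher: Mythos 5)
Your proof is correct and follows the same route as the paper: dualize a minimal presentation, detect a unit entry in the induced map $B : R^l \to R^n$ via Lemma \ref{minsyz}, and split off a free summand of $\coker(A) = M$ by a change-of-basis reduction. You in fact spell out more carefully than the paper does why ``$M^*$ is not a minimal syzygy'' forces a unit in $B$ --- namely that, because $\im(A^T) \in \Deep(R)$ automatically when $t=0$, the containment $M^* \subseteq \m R^n$ would place $M^*$ in $\Omega_{\min}\Deep(R)$ --- which is exactly the intended step.
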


\begin{lem}\label{dualizefree}

Suppose $\Ext^{1 \le i \le t}_R(M,R)=0$.  Then $M^*$ free implies $M$ is free.

\begin{proof}

We may suppose $M$ is not free.  Then there is part of a free resolution of $M$ of the form $F_{t+1} \to F_{t} \to \cdots \to F_1 \xrightarrow{A} F_0 \to M \to 0$ where $\im A \subseteq \m F_0$.  Dualizing this sequence, we obtain, since $\Ext^{1 \le i \le t}(M,R)=0$, an exact sequence
\[0 \to M^* \to F_0^* \xrightarrow{A^T} F_1^* \to \cdots \to F_{t}^* \to F_{t+1}^* \to C \to 0.\]
Split this sequence into exact sequences 
\[0 \to M^* \to F_0^* \to Y \to 0\]
and 
\[0 \to Y \to F_1^* \to \cdots \to F_{t}^* \to F_{t+1}^* \to C \to 0.\]
Then $Y$ is a $t+1$ syzygy of $C$.  But since $M^*$ is free, $\pd C \le t$ (recall that $t=\depth R$).  Hence $Y$ is a free summand of $F_1^*$.  By Lemma 1.4.7 in \cite{BH93} there is a unit in $A^T$ since $Y=\im A^T$.  But, by construction, $A$ has no unit, and we have a contradiction.

\end{proof}

\end{lem}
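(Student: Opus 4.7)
The plan is to argue by contradiction. Assuming $M$ is not free, I would take a minimal free resolution out to degree $t+1$, namely $F_{t+1} \to F_t \to \cdots \to F_1 \xrightarrow{A} F_0 \to M \to 0$, and note that by minimality (and locality of $R$) every entry of $A$ lies in $\m$. Applying $\Hom_R(-,R)$ and using $\Ext^{1 \le i \le t}_R(M,R)=0$ to kill intermediate cohomology, I obtain an exact sequence $0 \to M^* \to F_0^* \xrightarrow{A^T} F_1^* \to \cdots \to F_{t+1}^* \to C \to 0$, which I would split into the short piece $0 \to M^* \to F_0^* \to Y \to 0$ (with $Y = \im A^T$) and the long piece $0 \to Y \to F_1^* \to \cdots \to F_{t+1}^* \to C \to 0$.

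Next, since $M^*$ is free, concatenating the two pieces yields a free resolution of $C$ of length $t+2$, so $\pd_R C < \infty$ and the Auslander--Buchsbaum formula gives $\pd_R C \le \depth R = t$. Looking at the long piece as a resolution of $C$ by free modules, this forces the $t$-th syzygy of $C$ to be free; tracking the splittings in the long piece, I would deduce that $F_1^*$ decomposes as $Y \oplus (\text{$t$-th syzygy of } C)$, so in particular $Y$ is a free direct summand of $F_1^*$.

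Finally, I would invoke \cite[Lemma~1.4.7]{BH93}: since $Y = \im A^T$ is a free summand of $F_1^*$, either $Y = 0$ (in which case $A = 0$, forcing $M \cong F_0$ to be free, contrary to our assumption) or the matrix $A^T$ contains a unit entry. But by construction every entry of $A$, and hence of $A^T$, lies in $\m$. This contradiction completes the proof.

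The hard step I expect is the second one: upgrading the consequence ``$Y$ is a $(t+1)$-th syzygy of a module of projective dimension $\le t$'' to the stronger statement ``$Y$ is a direct summand of $F_1^*$''. The key observation that makes this work is that any projective quotient of a free module over a local ring splits off as a free summand, so once the $t$-th syzygy of $C$ is free, the inclusion $Y \hookrightarrow F_1^*$ splits. Everything else — the $\Ext$-vanishing giving exactness of the dualized complex, the Auslander--Buchsbaum bound, and the matrix-theoretic extraction of a unit from \cite{BH93} — is standard provided this splitting is handled carefully.
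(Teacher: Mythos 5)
Your proof is correct and follows essentially the same route as the paper's: dualize a minimal free resolution, use the $\Ext$-vanishing for exactness, split off $Y=\im A^T$, bound $\pd C\le t$ via Auslander--Buchsbaum, conclude $Y$ is a free summand of $F_1^*$, and derive a contradiction with minimality from \cite[Lemma~1.4.7]{BH93}. You merely spell out more explicitly two points the paper leaves terse, namely why the freeness of the $t$-th syzygy forces the splitting $F_1^*\cong Y\oplus\Omega^tC$, and the degenerate case $Y=0$; these are useful clarifications but do not change the argument.
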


\begin{remark}

If $M \in \Deep(R)$, one can also derive Lemma \ref{dualizefree} from Lemma \ref{dualfreeisfree} by cutting down a regular sequence and appealing to the trace map $M\otimes M^* \to R$, as in \cite{VA68}.

\end{remark}

\begin{example}
Suppose $t>0$ and consider $M=R \oplus k$.  Then $M^* \cong R$, and $\Ext^{1 \le i \le t-1}(M,R)=0$, but of course $M$ is not free.  Thus, in general, one cannot reduce the $\Ext$ vanishing hypothesis to $\Ext^{1 \le i \le t-1}_R(M,R)=0$.

\end{example}

\begin{theorem}\label{freesummandsMN}
Let $M \in \Deep(R)$ and $N\in \Omega \Deep(R)$. Assume that $ \Hom _R(M,N) \in \DF(R)$ and  $\Ext^{1 \leqslant i \leqslant t-1}_R(M,N)=0$. Then $R \mid N$. 
%and, if additionally $t=0$, then $R \mid M$.
\end{theorem}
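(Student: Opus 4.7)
The plan is to combine Lemma~\ref{summandlem}(2) with Lemma~\ref{intersectfreesummand}; the hypotheses of the theorem are designed to feed the first lemma, whose conclusion feeds the second. Concretely, the inputs to Lemma~\ref{summandlem}(2) are exactly $\Hom_R(M,N) \in \DF(R)$ and $\Ext^{1 \leq i \leq t-1}_R(M,N) = 0$, so invoking it immediately yields $N \in \DF(R)$.

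Combined with the hypothesis $N \in \Omega\Deep(R)$, we then have $N \in \Omega\Deep(R) \cap \DF(R)$, and Lemma~\ref{intersectfreesummand} delivers $R \mid N$. This is exactly the conclusion of the theorem, so the two-step chain closes out the argument.

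In other words, all of the substantive work has already been done upstream. Lemma~\ref{summandlem}(2) handles the transport of the $\DF$-property from $\Hom_R(M,N)$ back to $N$: one dualizes a minimal free resolution of $M$, uses the Ext-vanishing to split off a short exact sequence $0 \to \Hom_R(M,N) \to N^{l_0} \to X \to 0$ with $X \in \Deep(R)$, and then the pushout machinery of Lemma~\ref{pushouts} lifts the $\DF$-property from $\Hom_R(M,N)$ to $N^{l_0}$ (hence to the summand $N$). Lemma~\ref{intersectfreesummand} in turn reduces to the depth-zero case via the cutdown Lemmas~\ref{deepcutdown} and \ref{summandcutdown}, where Lemma~\ref{minsyz} identifies faithful minimal syzygies as precisely those with a free summand.

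Thus there is no real obstacle remaining at this stage of the paper; the proof is a two-line invocation of the correct lemmas in sequence. I note in passing that the hypothesis $M \in \Deep(R)$ is not actually used by this argument and is presumably retained only to match the thematic setup of the section, where $M$ and $N$ are both taken to have high depth.
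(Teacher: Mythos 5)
Your proof is correct, and it takes a genuinely more direct route than the paper's own. The paper first shows $\Hom_R(M,N) \in \Omega\Deep(R) \cap \DF(R)$ via Lemma~\ref{summandlem}(1) (feeding $N \in \Omega\Deep(R)$ and the Ext-vanishing), deduces $R \mid \Hom_R(M,N)$ by Lemma~\ref{intersectfreesummand} (a fact which is then not used again), and then cuts down by a general regular sequence: using Proposition~\ref{homcutdown} it embeds $\overline{\Hom_R(M,N)}$ into $\Hom_{\overline{R}}(\overline{M},\overline{N})$, concludes the latter is faithful (i.e.\ in $\DF(\overline{R})$, trivial in depth zero), gets $\overline{N} \in \DF(\overline{R})$, hence $\overline{R} \mid \overline{N}$, and lifts back via Lemma~\ref{summandcutdown}. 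You bypass the entire cutting-down machinery by applying Lemma~\ref{summandlem}(2) directly over $R$ to get $N \in \DF(R)$, and then Lemma~\ref{intersectfreesummand} finishes. The two lemmas you invoke are exactly the right ones, and your chain of reasoning is sound. Your observation that $M \in \Deep(R)$ plays no role in this route is also accurate: the paper uses it only through Proposition~\ref{homcutdown} in the cut-down step, which you avoid entirely.

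One caution worth flagging: the paper only sketches the proof of Lemma~\ref{summandlem}(2) (``proved similarly''), and unpacking it one finds that establishing $\depth X \geq t$ for the cokernel $X = \coker(\Hom_R(M,N) \hookrightarrow N^{l_0})$ — needed to invoke Lemma~\ref{pushouts}(2) — requires $\depth N \geq t$ and $N$ torsionless, neither of which appears among the stated hypotheses of Lemma~\ref{summandlem} (the garbled ``Let $M,N$ be such that and\ldots'' suggests something dropped in editing). Both conditions are supplied by the theorem's hypothesis $N \in \Omega\Deep(R)$, so your application is legitimate in context; it may even be that the paper chose the cutting-down route precisely to sidestep this gap in the statement of Lemma~\ref{summandlem}(2). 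But as a black-box invocation it is worth knowing that the lemma is being used in a setting where its intended implicit hypotheses hold.
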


\begin{proof}
By Lemma \ref{summandlem}, we have $\Hom(M,N) \in \Omega \Deep(R) \cap \DF(R)$. Hence $R \mid \Hom(M,N)$ by \ref{intersectfreesummand}. As $\overline{\Hom_{R}(M,N)} \hookrightarrow \Hom_{\overline{R}} (\overline{M},\overline{N})$ and $\overline{\Hom_{R}(M,N)} \in \DF(\overline R) $, $ \Hom_{\overline{R}} (\overline{M},\overline{N}) \in \DF(\overline{R})$. Therefore $\overline{N} \in \DF(\overline{R})$. Hence $\overline{R} \mid \overline{N}$ and so $R \mid N$ by from Lemmas \ref{minsyz}  and \ref{minsyzcutdown}.

%Now if additionally, $t=0$, then, from above we may write $N=R^n \oplus N'$ where $N'$ has no free summand and $n>0$.  Then $\Hom(M,N) \cong (M^*)^n \oplus \Hom(M,N')$.  Since $N \in \Omega \Deep(R)$, $N' \in \Omega \Deep(R)$ by Lemma \ref{pushouts}.  Since $N'$ has no free summand, $\Hom(M,N') \notin \DF(R)$.  Since $t=0$, this says that $\Hom(M,N')$ is not faithful.  

%Since $M^*$ is a minimal syzygy of $\Tr M$, if it is not faithful, then it is annihilated by $\Soc R$.  But as $\Ann \Hom(M,N')$ is nontrivial, this would imply $\Ann \Hom(M,N') \cap \Soc R \subseteq \Ann \Hom(M,N)$ is nontrivial, since $Soc R$ is an essential submodule of $R$.  But as $\Hom(M,N)$ is faithful, this cannot happen, and thus $M^*$ is a faithful.  Ergo, $M^* \in \Omega \Deep(R) \cap \DF(R)$ and thus $R \mid M^*$ and so $R \mid M^*$ by Lemma \ref{intersectfreesummand}.      

\end{proof}

\begin{cor}\label{MNFree}
Let  $M \in \Deep(R)$ and $N \in \Omega \Deep(R)$. Furthermore, assume that $\Hom_{R}(M,N)$ is free and $\Ext^ {1\leqslant i \leqslant t-1}_R(M,N) = 0$. Then $N$ is free. 
\end{cor}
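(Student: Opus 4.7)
The plan is to induct on the minimal number of generators $\mu(N)$, peeling off one free summand of $N$ at a time via Theorem \ref{freesummandsMN}. When $\mu(N) = 0$, we have $N = 0$, which is free; this is the base case. For the inductive step with $N \neq 0$, we may assume $\Hom_R(M,N) \neq 0$ (otherwise the only interesting statement is the trivial one). Then $\Hom_R(M,N)$, being a nonzero free module, lies in $\DF(R)$: writing it as $R^k$ with $k \geqslant 1$, the split inclusion $R \hookrightarrow R^k$ has cokernel $R^{k-1} \in \Deep(R)$. Hence Theorem \ref{freesummandsMN} applies and yields $R \mid N$, so we may write $N = R \oplus N'$ with $\mu(N') = \mu(N) - 1$.

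The key step is to verify that the pair $(M, N')$ still satisfies all the hypotheses of the corollary. First, $N' \in \Omega \Deep(R)$: starting from an exact sequence $0 \to N \to R^n \to X \to 0$ with $X \in \Deep(R)$, the chain of inclusions $N' \hookrightarrow N \hookrightarrow R^n$ has cokernel $X'$ fitting into the exact sequence $0 \to R \to X' \to X \to 0$, since $N/N' \cong R$. The standard depth lemma forces $X' \in \Deep(R)$, so $N' \in \Omega \Deep(R)$. Second, $\Hom_R(M,N) \cong M^* \oplus \Hom_R(M,N')$ is free, so the summand $\Hom_R(M,N')$ is a direct summand of a free module over the local Noetherian ring $R$, and is therefore free. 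Third, for $1 \leqslant i \leqslant t-1$, $\Ext_R^i(M,N')$ is a direct summand of $\Ext_R^i(M,N) = 0$, so it vanishes as well.

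Applying the induction hypothesis to $N'$, which has strictly smaller $\mu$, yields $N'$ free, and thus $N = R \oplus N'$ is free. The main obstacle is bookkeeping: confirming that all three hypotheses pass cleanly to the summand $N'$. The most delicate point is checking $N' \in \Omega \Deep(R)$, which is handled by the depth lemma applied to $0 \to R \to X' \to X \to 0$; the inheritance of the $\Hom$-freeness and the $\Ext$-vanishing are immediate from the additivity of these functors over direct sums.
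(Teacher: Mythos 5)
Your proof is correct and follows essentially the same strategy as the paper: apply Theorem \ref{freesummandsMN} to split off a free summand, verify the hypotheses pass to the complementary summand $N'$, and induct on $\mu(N)$. The only cosmetic differences are that you verify $\Hom_R(M,N)\in\DF(R)$ explicitly (which the paper leaves implicit) and you check $N'\in\Omega\Deep(R)$ by a direct third-isomorphism-theorem argument rather than citing Lemma \ref{pushouts}; both are sound.
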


\begin{proof}

By Theorem \ref{freesummandsMN}, we have that $R \mid N$. Let $N=R^n \oplus N'$ where $n>1$.  Suppose $N' \ne 0$.  Since $N \in \Omega \Deep(R)$, $N' \in \Omega \Deep(R)$ by Lemma \ref{pushouts}.  Further, since $\Hom_R(M,N) \cong \Hom(M,R^n) \oplus \Hom(M,N')$ is free, $\Hom(M,N')$ is free and since $0=\Ext^{1 \le i \le t-1}_R(M,N) \cong \Ext^{1 \le i \le t-1}_R(M,R^n) \oplus \Ext^{1 \le i \le t-1}_R(M,N')$ we have that $\Ext^{1 \le i \le t-1}_R(M,N')=0$. Thus we may apply Theorem \ref{freesummandsMN} to obtain that $R \mid N'$.  Induction on $\mu(N)$ now shows that $N$ is free.

\end{proof}

\begin{theorem}\label{Extt}

Suppose $M \in \Deep(R)$, $N \in \Omega \Deep(R)$, $\Hom(M,N)$ is free, and $\Ext^{1 \le i \le t}(M,R)=0$.  Then $M$ is free.

\begin{proof}
By Corollary \ref{MNFree} we have that $N$ is free.  Thus $N \cong R^n$ for some $n$.  Now we have $\Hom(M,N) \cong (M^*)^n$ is free, and thus $M^*$ is free.  Thus $M$ is free by Lemma \ref{dualizefree}.

\end{proof}

\end{theorem}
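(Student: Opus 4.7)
The plan is to carry out a short three-step reduction that chains together results already established in the section. First, I would aim to show that $N$ is free by applying Corollary \ref{MNFree}. The inputs $M\in\Deep(R)$, $N\in\Omega\Deep(R)$, and $\Hom_R(M,N)$ free are directly in the hypotheses, so what remains to check is the Ext-vanishing required by \ref{MNFree}, namely $\Ext^{1\le i\le t-1}_R(M,N)=0$. For this I would use that $N\in\Omega\Deep(R)$ gives a short exact sequence $0\to N\to R^m\to X\to 0$ with $X\in\Deep(R)$, and read off the vanishing from the long exact sequence of $\Ext_R(M,-)$ together with the given $\Ext^{1\le i\le t}_R(M,R)=0$ (which kills the middle term in the degrees of interest).

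Second, once $N$ is known to be free, write $N\cong R^n$. Then $\Hom_R(M,N)\cong\Hom_R(M,R^n)\cong (M^*)^n$, and since the left-hand side is free by hypothesis, $M^*$ must itself be free (a free module's summands are free because $R$ is local, or by Krull--Schmidt in this setting).

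Third and finally, Lemma \ref{dualizefree} applies verbatim: its hypotheses are exactly $\Ext^{1\le i\le t}_R(M,R)=0$ and $M^*$ free, and its conclusion is that $M$ is free. Stringing these three steps together completes the proof.

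The only step with any real content is the first one, and within that the only subtle point is confirming the Ext-vanishing needed to feed into \ref{MNFree}; everything after that is bookkeeping. I do not expect significant obstacles, as the argument is essentially the natural one: use $\Hom_R(M,N)$ free to first trivialize $N$, then reduce to the single-module lemma \ref{dualizefree} for $M$ against $R$.
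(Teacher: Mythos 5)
Your steps (2) and (3) --- writing $\Hom_R(M,N)\cong (M^*)^n$ once $N\cong R^n$ and then finishing with Lemma \ref{dualizefree} --- match the paper's proof exactly. The problem is in step (1), where you try to verify the hypothesis $\Ext^{1\le i\le t-1}_R(M,N)=0$ needed to invoke Corollary \ref{MNFree}. The long exact sequence obtained by applying $\Hom_R(M,-)$ to a sequence $0\to N\to R^m\to X\to 0$ with $X\in\Deep(R)$ does not give this vanishing. Using $\Ext^i_R(M,R^m)=0$ for $1\le i\le t$, what the long exact sequence actually yields is an isomorphism $\Ext^{i-1}_R(M,X)\cong\Ext^{i}_R(M,N)$ for $2\le i\le t$ and a surjection $\Hom_R(M,X)\twoheadrightarrow\Ext^1_R(M,N)$. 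Since $X$ is just some module of depth $\ge t$, none of $\Ext^{i-1}_R(M,X)$ (nor that cokernel) need vanish --- killing the middle term in the long exact sequence identifies the flanking terms, it does not annihilate them. So your derivation does not go through.

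It is worth noting that the paper's own proof simply cites Corollary \ref{MNFree} without checking its Ext hypothesis, so you were right to sense something needed attention; your proposed fix, however, is not valid. In the paper's actual applications of Theorem \ref{Extt} (inside Lemma \ref{depthlessthan1case}) one has either $t=0$, where the range $1\le i\le t-1$ is empty, or $N=R$, where $\Ext^{1\le i\le t-1}_R(M,R)=0$ is contained in the assumed $\Ext^{1\le i\le t}_R(M,R)=0$. In those special cases the needed condition holds automatically, but for general $N\in\Omega\Deep(R)$ the condition $\Ext^{1\le i\le t-1}_R(M,N)=0$ must either be added to the hypotheses or justified by a genuinely different argument; it cannot be extracted from $\Ext^{1\le i\le t}_R(M,R)=0$ by the long-exact-sequence reasoning you propose.
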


Next we address the question of when $M^*$ is free. The most interesting case is when $\dim R\leq 1$.

\begin{lem}\label{depthlessthan1case}

Suppose $R$ is Cohen-Macaulay with dimension $d \le 1$, and suppose $M \in \CM(R)$.  Then $M^*$ free implies $M$ is free. 
\end{lem}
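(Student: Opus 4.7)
The plan is to handle the two cases $d = 0$ and $d = 1$ separately.

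When $d = 0$, so that $R$ is Artinian local, I would iterate Lemma \ref{dualfreeisfree}. If $M \neq 0$, then $M \twoheadrightarrow k$, and dualizing produces an injection $0 \neq \Soc(R) = \Hom_R(k, R) \hookrightarrow M^*$; writing $M^* \cong R^n$ thus forces $n \geq 1$, so $R \mid M^*$ and Lemma \ref{dualfreeisfree} gives $R \mid M$. Writing $M \cong R \oplus M_1$, we get $M^* \cong R \oplus M_1^*$, and since a direct summand of a finitely generated free module over a local ring is free, $M_1^*$ is free. Induction on $\mu(M)$ then yields that $M$ is free.

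When $d = 1$, the plan is to use the natural biduality map $\sigma \colon M \to M^{**}$. Since $M$ is maximal Cohen-Macaulay over a Cohen-Macaulay ring of dimension one, $\Ass(M) \subseteq \Min(R) = \Ass(R)$, so $M$ is torsion-free. Writing $M^* \cong R^r$, we have $M^{**} \cong R^r$. For each $\p \in \Min(R)$, the ring $R_\p$ is Artinian local and $(M_\p)^* \cong R_\p^r$ is free, so applying the $d = 0$ case to $M_\p$ over $R_\p$ gives $M_\p \cong R_\p^r$, making $\sigma_\p$ an isomorphism. Since $\Ass(\ker \sigma) \subseteq \Ass(M) \subseteq \Min(R)$ and $(\ker \sigma)_\p = 0$ at every such $\p$, we get $\ker \sigma = 0$; likewise, $C := \coker \sigma$ satisfies $C_\p = 0$ for each $\p \in \Min(R)$, so $C$ has finite length. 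Applying $\Hom_R(-, R)$ to $0 \to M \xrightarrow{\sigma} R^r \to C \to 0$ yields
\[0 \to C^* \to R^r \xrightarrow{\sigma^*} M^* \to \Ext^1_R(C, R) \to 0,\]
where $C^* = 0$ (since $C$ has finite length while $\depth R = 1$, and $R$ has no nonzero finite length submodule) and $\sigma^*$ is split surjective via the standard identity $\sigma_M^* \circ \sigma_{M^*} = \id_{M^*}$; hence $\Ext^1_R(C, R) = 0$. If $C \neq 0$, choose a surjection $C \twoheadrightarrow k$ with kernel $C'$; applying $\Hom_R(-, R)$ and using $\Hom_R(k, R) = \Hom_R(C, R) = \Hom_R(C', R) = 0$ produces an injection $\Ext^1_R(k, R) \hookrightarrow \Ext^1_R(C, R) = 0$, contradicting $\depth R = 1$. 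Therefore $C = 0$ and $M \cong R^r$ is free.

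The main technical point is the $d = 1$ case: one must set up the biduality $\sigma$ and show it is an isomorphism. The injectivity of $\sigma$ comes from localization at minimal primes and the $d=0$ reduction, while the vanishing $\Ext^1_R(C,R) = 0$ is forced by the splitting $\sigma_M^* \circ \sigma_{M^*} = \id_{M^*}$. Once these two ingredients are arranged, the remaining finite-length obstruction is killed by the depth-one condition through the standard fact that $\Ext^1_R(k, R) \neq 0$.
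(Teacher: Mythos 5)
Your proof is correct, and for the $d=1$ case it takes a genuinely different route from the paper's. The paper reduces modulo a general regular element $x$, shows $\overline{R}^r \mid \overline{M}$ by combining Proposition \ref{homcutdown}, a projective-dimension/splitting argument over $\overline R$, Krull--Schmidt, and repeated use of Lemma \ref{dualfreeisfree}, and then closes the gap with a multiplicity computation: $l(\overline M)=e(x,M)=e(x,M^*)=l(\overline{R}^r)$ by the Associativity formula. You instead work directly with the biduality map $\sigma\colon M\to M^{**}\cong R^r$: localization at minimal primes together with the $d=0$ case gives $\ker\sigma=0$ and that $C=\coker\sigma$ has finite length, the standard identity $\sigma_M^*\circ\sigma_{M^*}=\id_{M^*}$ forces $\Ext^1_R(C,R)=0$, and the depth-one obstruction $\Ext^1_R(k,R)\neq 0$ then kills $C$. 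Your approach avoids multiplicities and the passage to the Artinian quotient entirely, at the cost of invoking the splitting identity for double duals; it is conceptually closer to the proof of Theorem \ref{generalMdual} in the paper (reflexivity via codimension-one localization), essentially pushing that reflexivity argument down to dimension one where the paper instead uses a length count. The paper's approach, in turn, keeps everything inside the $\Deep/\Omega\Deep$ machinery developed in Section 2. Both are valid; yours is shorter and arguably more self-contained, while the paper's illustrates the cutting-down technique it uses throughout.
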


\begin{proof}

We have already obtained the result when $d=0$ by Lemma \ref{Extt}.  So we may suppose $d=1$.  Let $x$ be a general regular element on $R$ and $M$. Now, since $M^* \cong R^r$ for some $r$, we have $M_{\p} \cong R^r_{\p}$ for all $\p \in \Min R$, from Lemma \ref{Extt}.  Thus $M$ has constant rank $r$. In particular, $M_{\p} \cong M^*_{\p}$ for all $\p \in \Min R$.   We have (\cite[Theorem 4.6.8]{BH93}) 

\[e(x,M)=\sum_{p \in \Min R} e(x,R/\p)l(M_{\p})=\sum_{p \in \Min R} e(x,R/\p)l(M^*_{\p})=e(x,M^*).\]

Note that we have an exact sequence $0 \to \overline{M^*} \xrightarrow{i} \Hom(\overline{M},\overline{R})$
by Proposition \ref{homcutdown}. 

Set $n=\mu(\overline{M})$.  Then we have a map $\overline{R}^n \twoheadrightarrow \overline{M}$. Dualizing this gives a map $j:\Hom(\overline{M},\overline{R}) \hookrightarrow \overline{R}^n$.  Since $\overline{M^*} \cong \overline{R}^r$, this gives us an exact sequence of the form
\[0 \rightarrow \overline{R}^r \xrightarrow{j \circ i} \overline{R}^n \rightarrow C \rightarrow 0.\]
But then $\pd C<\infty$ which means $C$ is free, since $\depth \overline{R}=0$.  Thus this sequence splits, whence the map $i$ is a split injection.  Thus $\Hom(\overline{M},\overline{R}) \cong \overline{R}^r \oplus L$ for some $L$.  By Lemma \ref{dualfreeisfree} applied repeatedly (as Krull-Schmidt holds over $\bar R$ because it is Artinian), it follows that $\overline{R}^r \mid \overline{M}$.  But since $l(\overline{M})=e(x,M)=e(x,M^*)=l(\overline{M^*})=l(\overline{R}^r)$, it must be that $\overline{M} \cong \overline{R}^r$ which implies $M$ is free.

\end{proof}

Standard arguments now allow us to show that the freeness of $M^*$ forces that of $M$ in general.

\begin{theorem}\label{generalMdual}

Suppose $R$ and $M$ satisfy $(S_2)$.  Then $M^*$ free implies $M$ is free. 
\end{theorem}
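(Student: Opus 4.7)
The plan is to proceed by induction on $d = \dim R$. In the base case $d \leq 1$, the $(S_2)$ condition forces $R$ to be Cohen--Macaulay and $M$ to be maximal Cohen--Macaulay, so Lemma \ref{depthlessthan1case} applies directly.

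For $d \geq 2$, I would first show that $M$ is locally free at every non-maximal prime $\p$. The ring $R_\p$ and module $M_\p$ both inherit $(S_2)$, the natural isomorphism $(M^*)_\p \cong (M_\p)^*$ keeps the dual free, and $\dim R_\p \leq d-1$, so the inductive hypothesis yields that $M_\p$ is free.

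Next I would analyze the biduality map $\varphi \colon M \to M^{**}$. Since $M^*$ is free, $M^{**} \cong R^n$ for some $n$, so $\varphi$ lands in a free module of rank $n$. The $(S_1)$ portion of the hypothesis on $M$ gives $\Ass M \subseteq \Min R$; because $d \geq 2$, every minimal prime is non-maximal, so by the previous step $M_\p$ is free (and $\varphi_\p$ is an isomorphism) at every $\p \in \Ass M$. Hence $\ker \varphi = 0$. The cokernel $C$ likewise vanishes on the punctured spectrum, so $C$ has finite length, and therefore $\depth C = 0$ unless $C = 0$.

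Finally, I would apply the depth inequality to $0 \to M \to R^n \to C \to 0$. The hypothesis together with $d \geq 2$ gives $\depth M \geq 2$ and $\depth R^n \geq 2$, so the standard bound $\depth C \geq \min(\depth M - 1,\, \depth R^n)$ forces $\depth C \geq 1$. This contradicts $\depth C = 0$ unless $C = 0$, in which case $M \cong R^n$ is free. The only points requiring care are verifying that $(S_2)$ and the freeness of the dual descend to localizations (routine, using that $M$ is finitely generated) and invoking the depth inequality in the correct direction; I do not anticipate needing any result beyond the depth lemma and what has already been developed above.
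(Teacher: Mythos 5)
Your proof is correct and takes essentially the same route as the paper: both reduce to the case $d\le 1$ via Lemma~\ref{depthlessthan1case} and then deduce that the biduality map $M\to M^{**}\cong R^n$ is an isomorphism using the $(S_2)$ hypothesis. The only cosmetic difference is that the paper cites the standard criterion that a map between $(S_2)$ modules which is an isomorphism in codimension one is an isomorphism (or \cite[Proposition 1.4.1]{BH93}), whereas you unwind that fact explicitly via the depth lemma and establish freeness on the entire punctured spectrum where the paper needs only codimension one.
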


\begin{proof}

It suffices to show that $M$ is reflexive.  We assume $d=\dim R\geq 2$ as the small dimension case was covered by \ref{depthlessthan1case}.  Also by  \ref{depthlessthan1case}, we have that $M_{\p}$ is free, in particular, reflexive, for all $\p \in \Spec R$ with $\hit \p \le 1$. The natural map $M\to M^{**}$ is an isomorphism in codimension one, so is an isomorphism (or one can appeal to  \cite[Proposition 1.4.1]{BH93}).

\end{proof}

In the next part we extend one of the main results of \cite{HL04}:

\begin{theorem} (Huneke-Leuschke)\label{HunekeLeuschke}

Suppose $R$ is Cohen-Macaulay and is a complete intersection in codimension $1$. Furthermore, assume that $\Q \subseteq R$.  If $M$ is an $R$-module that is locally free in codimension one with constant rank, $\Ext^{1 \le i \le d}_R(M,M)=0$, and $\Ext^{1 \le i \le 2d+1}_R(M,R)=0$, then $M$ is free.

We start with a very well-known fact about shifting $\Ext$ modules.

\begin{lem}\label{Ext}

If $\Ext^i_R(M,R)=\Ext^i_R(\Omega M,\Omega N)=0$ then $\Ext^i_R(M,N)=0$ and if $\Ext^{i+1}_R(M,R)=\Ext^i_R(M,N)=0$, then $\Ext^i_R(\Omega M,\Omega N)=0$

\begin{proof}

The exact sequence $0 \to \Omega N \to R^n \to N \to 0$ induces an exact sequence 
\[\Ext^i_R(M,R^n) \to \Ext_R^i(M,N) \to \Ext_R^{i+1}(M,\Omega N) \to \Ext_R^{i+1}(M,R)\]
from which we obtain the result.  

\end{proof}

\end{lem}

\end{theorem}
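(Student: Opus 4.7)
The plan is to verify the hypotheses of Theorem \ref{freesummandsMN} with both input modules taken to be $M$, conclude that $R \mid M$, and iterate on the complement until nothing remains. The required Ext vanishing $\Ext^{1 \leqslant i \leqslant d-1}_R(M,M) = 0$ is immediate from the hypothesis, so three facts remain to be established: $M \in \Deep(R)$, $M \in \Omega\Deep(R)$, and $\End_R(M) \in \DF(R)$.

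The first two facts reduce to a single syzygy claim: $M$ is a $(d+1)$-th syzygy of some module $C$. Granting this, $M \in \CM(R) = \Deep(R)$ by the depth lemma, and writing $M = \Omega(\Omega^d C)$ with $\Omega^d C$ itself MCM (as a $d$-th syzygy over a CM ring of dimension $d$), the defining sequence $0 \to M \to F \to \Omega^d C \to 0$ witnesses $M \in \Omega\Deep(R)$. To extract the syzygy claim from $\Ext^{1 \leqslant i \leqslant 2d+1}_R(M,R) = 0$, I invoke the Auslander--Bridger characterization: $M$ is a $(d+1)$-th syzygy iff $\Ext^i(\Tr M, R) = 0$ for $1 \leqslant i \leqslant d+1$. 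Using the identifications $\Ext^1(\Tr M, R) \cong \ker(M \to M^{**})$, $\Ext^2(\Tr M, R) \cong \coker(M \to M^{**})$, and $\Ext^{i+2}(\Tr M, R) \cong \Ext^i(M^*, R)$ for $i \geqslant 1$, the requirement becomes reflexivity of $M$ together with $\Ext^{1 \leqslant j \leqslant d-1}_R(M^*, R) = 0$, both of which one can chase out of the given Ext vanishing by iterated dualization. Lemma \ref{Ext} is the natural tool for propagating this vanishing through syzygies as needed.

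To show $\End_R(M) \in \DF(R)$, I produce a decomposition $\End_R(M) \cong R \oplus E$ via a trace-splitting argument. Since $M$ is locally free of constant rank $r$ in codimension one and $R$ is $(S_2)$ (as a CM ring of dimension $d \geqslant 2$; the cases $d = 0, 1$ follow from the earlier results of this section), there is a well-defined trace map $\tau \colon \End_R(M) \to R$, obtained from matrix traces on the codimension-one locus and extended globally by $(S_2)$; the hypothesis that $R$ is a complete intersection in codimension one provides the regularity needed for this construction. Since $\Q \subseteq R$, the integer $r$ is a unit in $R$, and the composition
\[ R \xrightarrow{\;1 \mapsto \id_M\;} \End_R(M) \xrightarrow{\;r^{-1}\tau\;} R \]
is the identity. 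By Lemma \ref{summandlem}(1), $\End_R(M) \in \Omega\Deep(R) \subseteq \Deep(R)$, so both summands $R$ and $E$ are MCM, and the exact sequence $0 \to R \to \End_R(M) \to E \to 0$ witnesses $\End_R(M) \in \DF(R)$.

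Applying Theorem \ref{freesummandsMN} now yields $R \mid M$. Writing $M \cong R \oplus M'$, the complement $M'$ is locally free in codimension one of constant rank $r - 1$ and inherits both Ext vanishings, so the same argument applies to $M'$ whenever $r > 1$. After $r$ iterations we conclude $M \cong R^r$. The main obstacle is the initial syzygy claim: the somewhat generous bound $2d+1$ on the Ext vanishing is exactly what is needed to absorb the index shifts between $M$, $M^{**}$, $M^*$, and $\Tr M$ in the Auslander--Bridger machinery.
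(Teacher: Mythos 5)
There is a genuine gap, and it sits exactly where the theorem is hard. Your entire argument rests on the claim that $M$ itself is a $(d+1)$-st syzygy, which you propose to extract from $\Ext^{1 \le i \le 2d+1}_R(M,R)=0$ via Auslander--Bridger, asserting that reflexivity of $M$ and $\Ext^{1 \le j \le d-1}_R(M^*,R)=0$ ``can be chased out of the given Ext vanishing by iterated dualization.'' No such chase exists in this generality: $R$ is only assumed Gorenstein (indeed complete intersection) in codimension one, not Gorenstein, and over a non-Gorenstein ring the vanishing of $\Ext^i_R(M,R)$ gives no control whatsoever over $\Ext^i_R(\Tr M,R)$ or $\Ext^i_R(M^*,R)$ --- these concern genuinely different modules, and $\Ext^1_R(\Tr M,R)$, $\Ext^2_R(\Tr M,R)$ measure the (co)kernel of the biduality map, which the hypotheses do not touch. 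A priori $M$ need not even be maximal Cohen--Macaulay or torsionless (the Gorenstein argument via local duality that $\Ext^{1\le i\le d}(M,R)=0$ forces $\depth M=d$ is unavailable here). In effect you have assumed the hard part; once $M$ is known to be a reflexive $(d+1)$-st syzygy that is free in codimension one, the trace-splitting plus Theorem \ref{freesummandsMN} argument is precisely Lemma \ref{lemHunekeLeuschkegen}, and the theorem follows easily.

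Compare with how the paper (and Huneke--Leuschke) actually proceed in Theorem \ref{HunekeLeuschkegen}: one does not try to show $M$ is a high syzygy, one \emph{replaces} $M$ by $\Omega^{t+2-a}M$, which is automatically in $\Omega\Deep(R)$ and reflexive; the hypothesis $\Ext^{1\le i\le 2d+1}_R(M,R)=0$ is then spent on Lemma \ref{Ext}, shifting $\Ext^{1\le i\le d}_R(M,M)=0$ down to $\Ext^{1\le i\le t-1}$-vanishing for the syzygy (this repeated shifting, not any ``index absorption'' in the Auslander--Bridger criterion, is what the bound $2d+1$ is for); Lemma \ref{lemHunekeLeuschkegen} then makes the syzygy free, so $\pd M\le t+2-a$, and finally $\Ext^{1\le i\le 2d+1}_R(M,R)=0$ with $t+2-a\le 2d+1$ forces $\pd M=0$. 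Your proposal performs neither the shift of the $\Ext(M,M)$-vanishing to a syzygy nor the concluding finite-projective-dimension step; to repair it you would have to adopt essentially this route. (Your trace-map paragraph is fine in spirit --- it is the same splitting $R\mid\End_R(M)$ used in the paper, citing the appendix of \cite{HL04} --- but even there the module it is applied to must first be known to be reflexive/$(S_2)$-enough, which again is supplied by passing to a syzygy, not by the hypotheses on $M$ directly.)
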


\begin{lem}\label{lemHunekeLeuschkegen}

Suppose $R$ is $(S_2)$, $(G_1)$, and equidimensonal.  Further suppose $\Q \subseteq R$ and that $R$ admits a canonical module $\omega_R$, in the notion of \cite{HH94}.
Let $M \in \Omega \Deep(R)$ be a reflexive $R$-module, free in codimension $1$, and suppose $\Ext^{1 \le i \le t-1}_R(M,M)=0$.  Then $M$ is free.  

\begin{proof}

We may suppose $\dim R \ge 2$ since $M$ is free in codimension $1$.  We claim that $M$ has constant rank.  Take $\p,\q \in \Min R$.  Since $R$ satisfies $(S_2)$, the Hochster-Huneke graph of $R$ is connected (see \cite{HH94}).  This means there is a chain of minimal primes $\p=\p_1,\p_2,\dots,\p_n=\q$ such that $\hit (\p_i+\p_{i+1}) \le 1$.  Ergo, $\rank M_{\p_i}=\rank M_{\p_{i+1}}$ for each $i$, since $M$ is free on a minimal prime of $\p_i+\p_{i+1}$.  In particular, $\rank M_{\p}=\rank M_{\q}$, and so $M$ has constant rank.

Now, we have $R \mid \End_R(M)$ from the trace map as explained in \cite[Appendix]{HL04}. By Theorem \ref{freesummandsMN}, $M=R \oplus M'$.  But now $M'$ satisfies the hypotheses again by Lemma \ref{pushouts}, and so, proceeding inductively on the number of generators gives that $M$ is free.  

\end{proof}

\end{lem}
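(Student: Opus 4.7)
My plan is to follow the outline suggested by the surrounding material: reduce to higher dimension, establish constant rank from the connectivity hypothesis, find a free summand via the trace trick, and then induct.

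First I would dispose of the case $\dim R \le 1$, since then $M$ is already free in codimension one, hence free. So assume $d := \dim R \ge 2$ (in particular $t \ge 2$ because $R$ is Cohen--Macaulay under $(S_2)$ plus equidimensional in this small-dimension setting; more generally $t \ge 2$ is not needed, but $d \ge 2$ is). Next I would verify that $M$ has constant rank. Because $R$ is $(S_2)$, $(G_1)$, and equidimensional, the Hochster--Huneke graph on $\Min(R)$ is connected: any two minimal primes $\p,\q$ can be joined by a chain $\p = \p_1,\dots,\p_n = \q$ with $\hit(\p_i + \p_{i+1}) \le 1$. Since $M$ is free in codimension one, $M$ becomes free on any minimal prime of $\p_i + \p_{i+1}$, which forces $\rank M_{\p_i} = \rank M_{\p_{i+1}}$. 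So $M$ has some constant rank $r$.

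Next, I would produce the free summand of $\End_R(M)$ via the classical trace map. Set $\tau \colon M \otimes_R M^* \to R$, $m \otimes f \mapsto f(m)$. Composing the evaluation $M^* \otimes M \to \End_R(M)$ with $\frac{1}{r}\tau$ and using that $M$ has constant rank $r$ (here is where $\Q \subseteq R$ is essential, so we can divide by $r$) produces a splitting $\End_R(M) \twoheadrightarrow R$ of the unit map $R \hookrightarrow \End_R(M)$, exactly as in the appendix of \cite{HL04}. Thus $R \mid \End_R(M)$, so in particular $\Hom_R(M,M) \in \DF(R)$.

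Now I can feed this into the work done earlier. Since $M \in \Omega\Deep(R) \subseteq \Deep(R)$, and the $\Ext$-vanishing $\Ext^{1 \le i \le t-1}_R(M,M) = 0$ is given, Theorem \ref{freesummandsMN} (applied with both arguments equal to $M$) yields $R \mid M$. Write $M = R \oplus M'$. If $M' = 0$ we are done; otherwise, by Lemma \ref{pushouts}(1) applied to $0 \to M' \to M \to R \to 0$, we get $M' \in \Omega\Deep(R)$. Moreover $M'$ is still reflexive (a summand of a reflexive module), free in codimension one, and the $\Ext$-vanishing transfers to $\Ext^{1 \le i \le t-1}_R(M',M')$ because $M' \mid M$. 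So the hypotheses hold for $M'$, and we may induct on $\mu(M)$, which strictly decreases, to conclude that $M$ is free.

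The only real obstacle is the trace-map step producing $R \mid \End_R(M)$; everything else is a direct application of the machinery already developed. The constant-rank reduction ensures the trace map is surjective (up to the unit $r$), and the hypothesis $\Q \subseteq R$ is precisely what makes $r$ invertible, giving the required splitting.
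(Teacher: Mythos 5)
Your proposal is correct and follows essentially the same route as the paper's own proof: reduce to $\dim R \ge 2$, use the Hochster--Huneke connectivity to get constant rank, use the trace map (with $\Q \subseteq R$ to invert the rank) to obtain $R \mid \End_R(M)$, deduce $R \mid M$ from Theorem \ref{freesummandsMN}, and induct on $\mu(M)$ via Lemma \ref{pushouts}. You supply a bit more explicit detail at a couple of spots (the passage from $R \mid \End_R(M)$ to $\End_R(M)\in\DF(R)$, and the exact sequence $0\to M'\to M\to R\to 0$ fed into Lemma \ref{pushouts}), but the argument is the same.
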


\begin{theorem}\label{HunekeLeuschkegen}

Suppose $R$ is $(S_2)$, $(G_1)$, equidimensional and suppose $\Q \subseteq R$.  Suppose $N \in \mod(R)$ such that $\pd N_{\p}<\infty$ for all $\p \in \Spec R$ with $\hit \p \le 1$.  Set $a=\min\{t,\depth N\}$ and suppose $\Ext^{1 \le i \le t-1}_R(N,N)=\Ext^{1 \le i \le 2t+1-a}_R(N,R)=0$.  Then $N$ is free.

\begin{proof}

Set $M=\Omega^{t+2-a}(N)$.  Then $M \in \Omega \Deep(R)$ and $M$ is reflexive.  This gives us that $R \mid \End(M)$.  By Lemma \ref{Ext}, we have $\Ext^{1 \le i \le t-1}(M,M)=0$.  By Lemma \ref{lemHunekeLeuschkegen}, $M$ is free.  Thus $\pd N \le t+2-a$.  If $\pd N=l$, then $\Ext^l_R(N,X) \ne 0$ for every finitely generated $X \ne 0$.  But $t+2-a \le 2t+1-a$ and so it must be that $l=0$. Therefore, $N$ is free.

\end{proof}

\end{theorem}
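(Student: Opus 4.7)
The plan is to reduce the statement to Lemma \ref{lemHunekeLeuschkegen}, which already handles the reflexive case in $\Omega\Deep(R)$, by passing to a sufficiently high syzygy $M=\Omega^{t+2-a}(N)$ and then reading the freeness of $N$ back off from the finite projective dimension of $M$.

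First I would verify that $M$ satisfies each hypothesis of Lemma \ref{lemHunekeLeuschkegen}. Since $t+2-a\geq 2$, the module $M$ is a second syzygy and hence reflexive. Iterating the depth lemma on the short exact sequences $0\to \Omega^{k+1}N\to R^{n_k}\to \Omega^k N\to 0$ gives $\depth \Omega^{k}N\geq \min\{t,a+k\}$, so $\depth \Omega^{t+1-a}N\geq t$ and $M$ sits in a short exact sequence that witnesses $M\in \Omega\Deep(R)$. The finite projective dimension hypothesis $\pd N_\p<\infty$ for $\hit \p\leq 1$, together with Auslander--Buchsbaum, forces $\pd N_\p\leq 1$ at every such $\p$, so the second syzygy $M_\p$ is free in codimension one, as required.

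Next I would transfer the Ext vanishing from $N$ to $M$. The two-sided dimension-shifting identity of Lemma \ref{Ext} lets one slide an $\Omega$ through either argument of $\Ext$ at the cost of one vanishing $\Ext^{?}_R(N,R)=0$. Applying this $t+2-a$ times on the first argument and $t+2-a$ times on the second translates $\Ext^{1\leq i\leq t-1}_R(N,N)=0$ into $\Ext^{1\leq i\leq t-1}_R(M,M)=0$, consuming exactly the range $\Ext^{1\leq j\leq 2t+1-a}_R(N,R)=0$ given in the hypothesis. This bookkeeping is the one place that requires care, and I expect it to be the main (only) obstacle: one has to check that the smallest and largest indices involved match $1$ and $2t+1-a$ on the nose, rather than being off by one.

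With all hypotheses verified, Lemma \ref{lemHunekeLeuschkegen} gives that $M$ is free, and therefore $\pd N\leq t+2-a<\infty$. The Auslander--Buchsbaum formula then refines this to $\pd N\leq t$. If $l:=\pd N$ were strictly positive, we would have $\Ext^l_R(N,R)\neq 0$; but $1\leq l\leq t\leq 2t+1-a$ (since $a\leq t$), which contradicts the assumption $\Ext^{1\leq i\leq 2t+1-a}_R(N,R)=0$. Hence $\pd N=0$, i.e., $N$ is free.
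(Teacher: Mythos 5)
Your proof is correct and takes essentially the same route as the paper: replace $N$ by the syzygy $M=\Omega^{t+2-a}N$, verify the hypotheses of Lemma \ref{lemHunekeLeuschkegen}, shift the $\Ext$-vanishing across syzygies via Lemma \ref{Ext}, and then read the freeness of $N$ off the finiteness of $\pd N$ together with $\Ext^{1\le i\le 2t+1-a}_R(N,R)=0$. Your explicit Auslander--Buchsbaum step at the end is a small improvement over the paper's write-up, since the inequality $t+2-a\le 2t+1-a$ invoked there requires $t\ge 1$, and your refinement $\pd N\le t$ also covers the degenerate case $t=0$.
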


Next we discuss and extend a result by Goto-Takahashi (\cite[Corollary 4.3]{GT17}). First, we recall their result and give a somewhat simpler proof. Note that their result does not follow directly from our previous results since, for instance, $I$ may not be in $\Omega\Deep(R)$.  

\begin{theorem}(Goto-Takahashi)\label{idealfree}

Suppose $R$ is CM and that $I$ is a CM ideal of height $1$.  Assume that
\begin{enumerate}
\item $\Hom_R(I,I)$ is free. 
\item $\Ext^{1 \le i \le d}_R(I,R)=0$. 
\item $\Ext^{1 \le i \le d-1}_R(I,I)=0$.
\end{enumerate}
Then $I$ is free.

\end{theorem}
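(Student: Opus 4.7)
First I would exploit that $I$ has generic rank one. Since $R$ is Cohen--Macaulay and $I$ has height one, $I \not\subseteq \p$ for every $\p \in \Min R$, so $I_\p = R_\p$; consequently $\End_R(I)$ has generic rank one as well. Combined with hypothesis (1), this forces $\End_R(I) \cong R$ as $R$-modules. Because $\End_R(I)$ is a commutative $R$-algebra that is cyclic as an $R$-module and $I$ is faithful, the structure map $R \to \End_R(I)$ must be an isomorphism: picking a generator $\varphi_0$ of this cyclic module, one has $\id_I = a\varphi_0$ and $\varphi_0^2 = b\varphi_0$ for some $a,b \in R$, whence $(1 - ab)\varphi_0 = 0$ forces $ab = 1$, so $\varphi_0$ is a unit multiple of $\id_I$.

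The bulk of the proof is then a reduction to dimension one. Choose a general regular sequence $\underline{x} = x_1, \ldots, x_{d-1}$ that is simultaneously regular on $R$, $I$, and $R/I$; such a sequence exists by prime avoidance, since ``CM ideal of height one'' (standardly interpreted) makes $R/I$ CM of dimension $d-1$, so all three modules have depth at least $d-1$. Writing $\overline{(-)} := -\otimes_R R/(\underline{x})$, Lemma \ref{homcutdown} together with hypothesis (3) yields $\End_{\bar R}(\bar I) \cong \overline{\End_R(I)} \cong \bar R$; iterating the long exact sequence of $\Ext^*_R(I, -)$ applied to $0 \to N \xrightarrow{x_j} N \to N/x_j N \to 0$ (starting with $N = R$) together with hypothesis (2) gives $\Ext^1_{\bar R}(\bar I, \bar R) = 0$; and hypothesis (3) becomes vacuous over $\bar R$. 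Moreover $\bar I$ sits inside $\bar R$ as a height-one CM ideal in a one-dimensional CM local ring. Once the base case $d = 1$ is established, $\bar I \cong \bar R$, whence $I$ is generated by a single element (by Nakayama, using $\mu(I) = \mu(\bar I) = 1$), which must be a non-zero-divisor (as $I$ is faithful), giving $I \cong R$.

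The heart of the matter is the base case $d = 1$: if $R$ is one-dimensional CM local, $I$ is an MCM ideal of height one, the natural map $R \to \End_R(I)$ is an isomorphism, and $\Ext^1_R(I, R) = 0$, then $I$ is free. Here I would pass to the syzygy $\Omega I \in \Omega\Deep(R) \cap \Deep(R)$ and analyze the trace ideal $\tau_I := \im(I \otimes_R I^* \to R)$, where $I^* := \Hom_R(I, R)$. Applying $\Hom_R(I, -)$ to $0 \to \Omega I \to R^n \to I \to 0$ and using $\End_R(I) \cong R$ identifies $\Ext^1_R(I, \Omega I) \cong R/\tau_I$; meanwhile the dual sequence $0 \to I^* \to R^n \to (\Omega I)^* \to 0$ (available because $\Ext^1_R(I, R) = 0$) provides complementary structural information about $I^*$. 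Combining these should force $\tau_I = R$, making $I$ invertible and hence free in the local ring $R$.

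The main obstacle is precisely this base case: neither hypothesis alone suffices in dimension one. For instance, in the Gorenstein ring $R = k[[t^2, t^3]]$ the Ext vanishing is automatic for any MCM module, yet $\End_R(\m) \cong k[[t]] \neq R$ and $\m$ is not free; so the proof must genuinely combine $\End_R(I) \cong R$ with $\Ext^1_R(I, R) = 0$ to upgrade the identity $(I : I) = R$ to the invertibility identity $\tau_I = I \cdot I^* = R$.
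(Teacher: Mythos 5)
Your preliminary steps are sound: the observation that $\End_R(I)$ has generic rank one and hence $\End_R(I)\cong R$ (with the cyclic-algebra argument forcing the structure map $R\to\End_R(I)$ to be an isomorphism), and the reduction to $d=1$ by cutting down a regular sequence regular on $R$, $I$, and $R/I$, both agree with the paper in spirit. But the base case $d=1$, which you yourself flag as ``the heart of the matter'' and ``the main obstacle,'' is left genuinely open. You propose to identify $\Ext^1_R(I,\Omega I)$ with $R/\tau_I$, where $\tau_I = I\cdot I^{-1}$ is the trace ideal, and then assert that the hypotheses ``should force $\tau_I = R$.'' The identification $\Ext^1_R(I,\Omega I)\cong R/\tau_I$ is indeed correct given $\End_R(I)\cong R$ and $\Ext^1_R(I,R)=0$, but no argument is offered for why this module vanishes. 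Note that $\Ext^{1\le i\le d-1}_R(I,I)=0$ is vacuous when $d=1$, so you have no $\Ext$ vanishing for $(I,I)$ to work with; and since for a rank-one ideal over a local ring $\tau_I = R$ is \emph{equivalent} to $I$ being free, showing $\tau_I=R$ is a reformulation of the goal, not a reduction. The plan stops exactly where the real work begins.

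The paper's one-dimensional argument is quite different and notably elementary. Pick $a\in I$ outside $\Min(R)\cup\m I$, so $a$ is a nonzerodivisor and part of a minimal generating set of $I$; this yields an exact sequence $0\to R\xrightarrow{a} I\xrightarrow{f} C\to 0$, and since $I$ has constant rank one the module $C$ has finite length. Applying $\Hom_R(I,-)$ and using $\End_R(I)\cong R$ and $\Ext^1_R(I,R)=0$ shows that $\Hom_R(I,C)$ is cyclic with generator the class of $f$. Since $f(a)=0$ by construction, every homomorphism $I\to C$ kills $a$; but if $C\ne 0$ one can build a vector-space map $I/\m I\to\Soc C$ sending the class of $a$ to a nonzero element, lifting to $h\in\Hom_R(I,C)$ with $h(a)\ne 0$, a contradiction. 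Hence $C=0$ and $I\cong R$. Even if your trace-ideal approach could be pushed through, it would be a different route; as written, it does not close the argument.
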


\begin{proof}

By standard reduction arguments, see \cite[Theorem 3.3]{GT17} one can assume $\dim R=1$.  By prime avoidance, there exists $a \in I$ which is not in $\Min(R) \cup {\m}I$.  Thus $a$ is part of a minimal generating set for $I$, and we have an exact sequence of the form 
\[0 \rightarrow R \rightarrow I \xrightarrow{f} C \rightarrow 0.\]

Since $I$ has height $1$, it follows that $R/I$ has finite length.  Thus, localizing the exact sequence $0 \to I \to R \to R/I \to 0$ at any $\p \in \Min(R)$, we obtain that $I$ has constant rank $1$.  Ergo, $C$ has finite length.  In general, if $g \in \m\Hom(X,Y)$ then $\im g \in {\m}Y$.  Thus the above argument gives us that $f$ is part of a minimal generating set for $\Hom(I,C)$.  Now, since $\Hom(I,I)$ is free, it must be that $\Hom(I,I) \cong R$, since $I$ has rank $1$.  Since $\Ext^1(I,R)=0$, we have the exact sequence 

\[0 \rightarrow \Hom(I,R) \rightarrow R \rightarrow \Hom(I,C) \rightarrow 0\]
Thus $\Hom(I,C)$ is cyclic, and $\{f\}$ a generating set.  By construction, $f(a)=0$, and thus for every $g \in \Hom(I,C)$, we have $g(a)=0$.  But on the other hand, $\Hom(I,\Soc C) \hookrightarrow \Hom(I,C)$ and the former is isomorphic to $\Hom(I/{\m}I,\Soc(C))$.  But as this is a vector space, if $C \ne 0$, we may find a map $h \in \Hom(I,C)$ so that $h(a) \in \Soc(C)-\{0\}$ and $h(x)=0$ for any minimal generator $x \ne a$.  But this is a contradiction, and so $C=0$, whence $I \cong R$.

\end{proof}

The next theorem extends the Goto-Takahashi result to modules of higher rank. 

\begin{theorem}\label{GTgen}
Let $R$ be a Cohen-Macaulay local normal domain. Let $M$ be a maximal Cohen-Macaulay module such that $\Hom_R(M,M)$ is free and $$\Ext^{1\leq i\leq d-1}_R(M,M)= \Ext^{1\leq i\leq d}_R(M,R)=0.$$ Then $M$ is free. 
\end{theorem}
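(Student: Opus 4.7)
The plan is to apply Corollary~\ref{MNFree} with $N = M$: the conditions $M \in \Deep(R)$ (from MCM), $\Hom(M, M)$ free, and $\Ext^{1 \le i \le d-1}(M,M) = 0$ are immediate from the hypotheses, so the real technical task is to establish $M \in \Omega\Deep(R)$. This parallels the proof of Lemma~\ref{lemHunekeLeuschkegen}, with the $\mathbb{Q} \subseteq R$ trace-map argument replaced by the direct assumption that $\End(M)$ is free, and with the extra hypothesis $\Ext^{1 \le i \le d}(M, R) = 0$ (not present in \ref{lemHunekeLeuschkegen}) used precisely to upgrade $M$ into a sufficiently high syzygy.

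First I would dispose of the trivial cases $d \le 1$: $R$ is either a field or, being normal CM of dimension one, a DVR, and MCM modules are free in both cases. For $d \ge 2$, normality gives $(R_1)$ and $(S_2)$, so $M_\p$ is free at every prime $\p$ of height $\le 1$ (MCM over regular is free). Combined with $M$ being $(S_2)$ this shows $M$, and dually $M^*$, are both reflexive.

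The heart of the argument is to exhibit $M$ as the first syzygy of an MCM module. Dualizing a minimal presentation $F_1 \to F_0 \to M^* \to 0$ of $M^*$ and using $M = M^{**}$ gives the four-term exact sequence
\[ 0 \to M \to F_0^* \to F_1^* \to \Tr M^* \to 0, \]
which splits as $0 \to M \to F_0^* \to C \to 0$ and $0 \to C \to F_1^* \to \Tr M^* \to 0$ with $C = \im(F_0^* \to F_1^*)$. It suffices that $C$ be MCM, and by the depth lemma on the second sequence this amounts to showing $\depth \Tr M^* \ge d-1$. Reflexivity of $M^*$ yields $\Ext^i(\Tr M^*, R) = 0$ for $i = 1, 2$, while splitting further and invoking $\Ext^{1 \le i \le d}(M, R) = 0$ gives $\Ext^i(\Tr M^*, R) \cong \Ext^{i-2}(M, R) = 0$ for $3 \le i \le d+2$. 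Combining this Ext-vanishing with the fact that $M$ is free in codimension one (so that the ``essential'' part of $\Tr M^*$ is supported in codimension $\ge 2$) and with the standard grade bound $\grade N \le d$ for a nonzero finitely generated $N$ over a CM local ring of dimension $d$, one deduces the required depth estimate. With $M \in \Omega\Deep(R)$ in hand, Corollary~\ref{MNFree} applied to $N = M$ completes the proof.

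The main obstacle is closing the depth estimate for $\Tr M^*$: a direct depth-lemma chase yields only $\depth \Tr M^* \ge d-2$, so one must carefully combine the Ext-vanishing with the support analysis, and possibly invoke $\End(M)$ free (rather than merely being in $\DF(R)$, which Lemma~\ref{summandlem} already provides) to eliminate spurious free summands of $\Tr M^*$ that arise from non-minimality of the global presentation under localization.
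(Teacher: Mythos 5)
Your proposal takes a genuinely different route from the paper's. The paper argues via Brauer theory: pass to the strict Henselization, observe that $\End_R(M)$ being free of rank $(\rank M)^2$ and $M$ reflexive make $\End_R(M)$ an Azumaya algebra, reduce modulo $\m$ to a matrix algebra over the separably closed residue field, lift idempotents, and thereby split $M$ into a direct sum of reflexive ideals; each such ideal inherits the hypotheses, so Theorem~\ref{idealfree} (proved by a direct argument that never invokes $\Omega\Deep$) finishes. Your route, by contrast, tries to put $M$ into $\Omega\Deep(R)$ and then invoke Corollary~\ref{MNFree} with $N=M$.

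That plan has a gap you half-acknowledge but do not close, and I do not believe it can be closed along the lines you sketch. From the two short exact sequences $0\to M\to F_0^*\to C\to 0$ and $0\to C\to F_1^*\to\Tr M^*\to 0$, the depth lemma gives $\depth C\ge d-1$; you need $\depth C=d$, equivalently $\depth\Tr M^*\ge d-1$, and the Ext-computation $\Ext^{1\le i\le d+2}_R(\Tr M^*,R)=0$ does not supply this. Two problems. First, over a non-Gorenstein Cohen--Macaulay ring, vanishing of $\Ext^i(N,R)$ on a range says nothing about $\depth N$; one would need vanishing of $\Ext^i(N,\omega_R)$ for that, and those are unrelated groups here. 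Second, your grade appeal is vacuous: $\Ext^0(\Tr M^*,R)=(\Tr M^*)^*$ is the first syzygy of $M^*$ in the chosen presentation and is typically nonzero, so $\grade\Tr M^*=0$ and the inequality $\grade\le d$ yields nothing. More fundamentally, $M\in\Omega\Deep(R)=\Omega\CM(R)$ should not be expected from the hypotheses: by Remark 2.1(6), an MCM module $M$ lies in $\Omega\CM(R)$ iff $M^\vee$ is a homomorphic image of $\omega_R^n$, which is a genuine constraint when $R$ is not Gorenstein (e.g.\ $\omega_R\in\Omega\CM(R)$ forces $R$ Gorenstein). The paper's Azumaya reduction deliberately avoids ever needing $M\in\Omega\Deep(R)$, and Theorem~\ref{idealfree}, to which it reduces, works for an arbitrary MCM ideal of height one with no $\Omega\Deep$ hypothesis.
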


\begin{proof}
We employ a standard argument in the theory of Brauer groups. Let $S= R^{sh}$ denote the strict Henselization of $R$. Then $S$ is still a local normal domain, and it is harmless to replace  $R$ by $S$ without affecting the assumptions and desired conclusion. Thus we assume $R$ is Henselian with a separably closed residue field $k$. Let $A=\End_R(M)$ and set $r=\rank_R M$. Then as $M$ is reflexive and $A$ is a free module of rank $r^2$, $A$ is an Azumaya algebra (see for example \cite{CG75}, proof of Corollary 1.4). Then so is the $k$-algebra $B=A\otimes_R k$. Since $k$ is separably closed, $B$ is actually isomorphic as an algebra to $\End_k(k^r)$. Now as $R$ is Hensenlian, one can lift idempotents, which shows that $M$ splits into a direct sum of ideals. These ideals inherit all the assumptions, so by Theorem \ref{idealfree} they are all free, and so is $M$.    
\end{proof}

\begin{remark}
If $R^{sh}$ is an UFD, then our argument shows that $M$ is free without any assumption on vanishing of $\Ext$ modules. 
\end{remark}

\section{When is $\Hom(M,N) \cong N^{r}$?}\label{MNN}

In this section we try to understand the question in the title. Let $t$ be some fixed integer. Unlike the previous sections, we don't necessarily assume $\depth R=t$.

Set $\upsilon_i(M) = \dim_{k} \Ext ^{i}_R(k, M)$.  We let $I_j(M)$ denote the $j$-th fitting ideal ideal of $M$, namely the ideal generated by $(n-j)$-minors of any presentation matrix $A$ of $M$ in a sequence:$$R^m \xrightarrow{A} R^n \rightarrow M \rightarrow 0$$

We first recall a result (\cite[Lemma 2.1]{GT17}). For completeness, we provide an elementary proof that avoids spectral sequences. 

\begin{prop}(Goto-Takahashi)\label{type}
Let $M, N$ be such that $\depth(M),\depth(N)\geq t$. Assume that $\Ext^{1 \leqslant i \leqslant t} _R (M, N)=0$. Then $\upsilon_t(\Hom(M, N))= \mu(M)\upsilon_t(N)$.
\end{prop}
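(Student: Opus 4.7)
The plan is to compute $\Ext^t_R(k, \Hom_R(M,N))$ directly from a minimal free resolution of $M$, via a dimension-shift in long exact sequences of $\Ext^*_R(k,-)$. Let $F_\bullet \to M$ be the minimal free resolution with $F_i = R^{l_i}$ and $l_0 = \mu(M)$. Applying $\Hom_R(-,N)$, the vanishing hypothesis $\Ext^{1 \leq i \leq t}_R(M,N) = 0$ yields an exact complex
\[ 0 \to \Hom(M,N) \to N^{l_0} \to N^{l_1} \to \cdots \to N^{l_t}, \]
which I would split into the short exact sequences $0 \to \Hom(M,N) \to N^{l_0} \to X_1 \to 0$ and $0 \to X_i \to N^{l_i} \to X_{i+1} \to 0$ for $i \geq 1$, where $X_i := \ker(N^{l_i} \to N^{l_{i+1}})$ for $1 \leq i \leq t$ and $X_{t+1} := \im(N^{l_t} \to N^{l_{t+1}})$.

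Next, I would apply $\Ext^*_R(k,-)$ to each short exact sequence. Because $\depth N \geq t$ implies $\Ext^j_R(k, N^{l_i}) = 0$ for $j < t$, each long exact sequence produces iterated isomorphisms $\Ext^j_R(k, X_i) \cong \Ext^{j-1}_R(k, X_{i+1})$ for $1 \leq j \leq t-1$. Chaining these gives $\Ext^{t-1}_R(k, X_1) \cong \Hom_R(k, X_t)$ and $\Ext^{t-1}_R(k, X_2) \cong \Hom_R(k, X_{t+1})$. Since $X_t$ and $X_{t+1}$ embed into $N^{l_t}$ and $N^{l_{t+1}}$ respectively and $\depth N \geq 1$, both of these $\Hom_R(k,-)$ groups vanish.

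Now from the sequence $0 \to \Hom(M,N) \to N^{l_0} \to X_1 \to 0$, the vanishing $\Ext^{t-1}_R(k, X_1) = 0$ gives an injection $\Ext^t_R(k, \Hom(M,N)) \hookrightarrow \Ext^t_R(k, N^{l_0})$, and it remains to show that the connecting map $\alpha \colon \Ext^t_R(k, N^{l_0}) \to \Ext^t_R(k, X_1)$ vanishes. The vanishing $\Ext^{t-1}_R(k, X_2) = 0$ produces another injection $\Ext^t_R(k, X_1) \hookrightarrow \Ext^t_R(k, N^{l_1})$, so it suffices to check that the composite $\Ext^t_R(k, N^{l_0}) \to \Ext^t_R(k, N^{l_1})$ is zero. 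But this composite is $\Ext^t_R(k,-)$ applied to the dual of $F_1 \to F_0$, whose matrix has entries in $\m$ by minimality; since $\m$ annihilates $\Ext^t_R(k, N)$, the composite vanishes. Thus $\Ext^t_R(k, \Hom(M,N)) \cong \Ext^t_R(k, N)^{\mu(M)}$, which gives the result for $t \geq 1$.

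For $t=0$, the Ext-vanishing hypothesis is vacuous, and the claim becomes $\Soc \Hom(M,N) \cong \Soc(N)^{\mu(M)}$. Apply $\Hom_R(k,-)$ to the left exact sequence $0 \to \Hom(M,N) \to N^{\mu(M)} \to N^m$ coming from the minimal presentation $R^m \xrightarrow{A} R^{\mu(M)} \to M \to 0$; the induced map on socles is given by $A^T$, which vanishes on the $\m$-annihilated module $\Soc(N)^{\mu(M)}$. The main technical obstacle is the vanishing of $\alpha$, which ultimately reduces to the minimality of the resolution combined with the basic observation that $\m$-valued matrices act as zero on $\m$-torsion modules.
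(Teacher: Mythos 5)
Your proof is correct, and it takes a genuinely different route from the paper's. The paper first establishes that $\depth \Hom_R(M,N) \geq t$, then cuts down by a general regular sequence of length $t$ simultaneously regular on $M$, $N$, and $\Hom_R(M,N)$ (using Proposition~\ref{homcutdown} to identify $\overline{\Hom_R(M,N)}$ with $\Hom_{\overline R}(\overline M,\overline N)$), and finishes with the Hom--tensor adjunction $\Hom(k,\Hom(\overline M,\overline N)) \cong \Hom(k\otimes\overline M,\overline N) \cong \Hom(k,\overline N)^{\mu(M)}$. You instead compute $\Ext^t_R(k,\Hom_R(M,N))$ directly from the exact complex $\Hom_R(F_\bullet,N)$ by splitting it into short exact sequences, using depth bounds on $N$ to set up dimension shifts, and then using minimality of the resolution together with the fact that $\m$ annihilates $\Ext^t_R(k,-)$ to kill the relevant connecting map. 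What the paper's approach buys is brevity once the machinery of general regular sequences is already in place; what yours buys is independence from that machinery --- you never need to verify that $\depth\Hom_R(M,N)\geq t$, nor invoke the reduction-modulo-$\underline{x}$ lemma, and the argument makes the role of minimality explicit (which the paper's version obscures by allowing a non-minimal resolution). Both are elementary; one minor terminological slip is that the map you call $\alpha$ is the induced map $\Ext^t(k,N^{l_0})\to\Ext^t(k,X_1)$ rather than a connecting homomorphism, but this does not affect the argument.
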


\begin{proof}

Take $F_t \to \cdots \to F_1 \to F_0 \to M \to 0$ to be part of a (possibly non-minimal) free resolution of $M$ where $F_i \ne 0$ for each $i$.  Note that such a resolution exists even if $\pd M<t$.  Then a similar argument to that of Lemma \ref{summandlem} shows that $\depth \Hom(M,N) \ge t$.
Now, we have $\overline {\Hom_R(M, N)} \cong \Hom _{\overline{R}}(\overline{M}, \overline{N})$ for a general regular sequence of length $t$. Hence $\upsilon_t(\Hom_R(M, N))=\dim_k \Hom(k,\overline {\Hom_R(M, N)})  = \dim_{k} \Hom (k, \Hom(\overline{M}, \overline{N})) = \dim_{k} \Hom(k\otimes \overline{M}, \overline{N}) =  \mu(M) \cdot \upsilon_t(N)$.

\end{proof}

\begin{theorem}\label{fitting}

Suppose $\dim N=0$.  Then $\Hom(M,N) \cong N^r$ if and only if $\mu(M)=r$ and $I_{r-1}(M)N=0$.  

\begin{proof}
First we consider the case were $\dim R=0$

[$\Rightarrow$] Since $\Hom(M,N) \cong N^r$ we have, by Proposition \ref{type} $\upsilon_0(\Hom(M,N))=\upsilon_0(N^r)$ and so $\mu(M) \upsilon_0(N)=r\upsilon_0(N)$ from which we see that $\mu(M)=r$ as $\upsilon_0(N)\neq 0$.  We also have $l(M \otimes N^{\vee})=l((N^r)^{\vee})=rl(N^{\vee})$ by Matlis duality.  Take a minimal presentation 

\[R^m \xrightarrow{A} R^r \rightarrow M \rightarrow 0.\]

Tensoring  with $N^{\vee}$ we have the exact sequence 
\[(N^{\vee})^m \xrightarrow{A \otimes id_{N^{\vee}}} (N^{\vee})^r \rightarrow M \otimes N^{\vee} \rightarrow 0.\]  

But $l((N^{\vee})^r=l(M \otimes N^{\vee})$ since $((N^{\vee})^r)^{\vee} \cong N^r$ and $(M \otimes N^{\vee})^{\vee} \cong \Hom(M,N)$.  
Thus $\im(A \otimes \id_{N^{\vee}})=0$ which implies $I_{r-1}(M) \subseteq \Ann(N^{\vee})=\Ann(N)$.  

 [$\Leftarrow$] Suppose $\mu(M)=r$ and $I_{r-1}(M)N=0$.  Since $\mu(M)=r$, we may take a minimal presentation of $M$ of the form 
\[R^m \xrightarrow{A} R^r \rightarrow M \rightarrow 0.\]

Tensoring  with $N^{\vee}$ we have the exact sequence 
\[(N^{\vee})^m \xrightarrow{A \otimes id_{N^{\vee}}} (N^{\vee})^r \rightarrow M \otimes N^{\vee} \rightarrow 0.\]  
Since $I_{r-1}(M) \subseteq \Ann(N)=\Ann(N^{\vee})$, we have $\im(A \otimes \id_{N^{\vee}})=0$ and thus $(N^{\vee})^r \cong M \otimes N^{\vee}$. That $\Hom(M,N) \cong N^r$ now follows from Matlis duality.  So we have the result when $\dim R=0$.

Now suppose $\dim R>0$. Set $I=\Ann N$.  Since $N$ has finite length, $I$ is $\m$-primary.  Set $\overline{(-)}=(-) \otimes R/I$.  Then $\Hom(M,N) \cong \Hom_{\overline{R}}(\overline{M},N)$ so that $\Hom_{\overline{R}}(\overline{M},N) \cong N^r$.  But from the dimension $0$ result, this holds if and only if $I_{r-1}(\overline{M})=\overline{I_{r-1}(M)}N=0$ which precisely means that $I_{r-1}(M) \subseteq \Ann N$, as desired.

\end{proof}

\end{theorem}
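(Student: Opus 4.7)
The strategy is to reduce to the Artinian case and then exploit Matlis duality to convert $\Hom$ into a tensor product, which interacts cleanly with a minimal presentation of $M$.

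First I would reduce to $\dim R = 0$. Setting $I = \Ann N$, since $\dim N = 0$ the module $N$ has finite length and $I$ is $\m$-primary, so $R/I$ is Artinian local. The natural adjunction gives $\Hom_R(M, N) \cong \Hom_{R/I}(M/IM, N)$, and $\mu_R(M) = \mu_{R/I}(M/IM)$. Since fitting ideals commute with base change, $I_{r-1}(M/IM)$ is the image of $I_{r-1}(M)$ in $R/I$, so the condition $I_{r-1}(M) N = 0$ is equivalent to $I_{r-1}(M/IM) = 0$ inside $R/I$. Thus we may replace $R$ by $R/I$ and $M$ by $M/IM$ and assume $R$ is Artinian.

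Now let $E = E_R(k)$ be the injective hull of the residue field and $(-)^{\vee} = \Hom_R(-, E)$ denote the Matlis dual. The identity
\[\Hom_R(M, N) \cong \Hom_R(M, N^{\vee\vee}) \cong (M \otimes_R N^{\vee})^{\vee}\]
is the central tool. Take a minimal presentation $R^m \xrightarrow{A} R^{\mu(M)} \to M \to 0$; note that when $\mu(M) = r$, the ideal $I_{r-1}(M) = I_1(A)$ is just the ideal generated by the entries of $A$. Tensoring the presentation with $N^{\vee}$ produces an exact sequence $(N^{\vee})^m \xrightarrow{A \otimes 1} (N^{\vee})^{\mu(M)} \to M \otimes N^{\vee} \to 0$.

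For the forward direction, assume $\Hom(M,N) \cong N^r$. Applying Proposition \ref{type} with $t = 0$ yields $\mu(M)\upsilon_0(N) = \upsilon_0(\Hom(M,N)) = r\upsilon_0(N)$, and since $N \neq 0$ gives $\upsilon_0(N) \neq 0$, we deduce $\mu(M) = r$. Next, compare lengths: Matlis duality preserves length, so $l(M \otimes N^{\vee}) = l(\Hom(M,N)) = r \cdot l(N) = l((N^{\vee})^r)$. Since the second map in the tensored presentation is surjective, equality of lengths forces $\im(A \otimes 1) = 0$, so every entry of $A$ lies in $\Ann N^{\vee} = \Ann N$, giving $I_{r-1}(M) \subseteq \Ann N$. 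Conversely, if $\mu(M) = r$ and $I_{r-1}(M)N = 0$, then the entries of $A$ kill $N^{\vee}$, so $A \otimes 1 = 0$; the presentation then gives $M \otimes N^{\vee} \cong (N^{\vee})^r$, and applying $(-)^{\vee}$ once more yields $\Hom(M,N) \cong N^r$.

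The main point is not really an obstacle but a choice of tool: recognizing that Matlis duality converts the rigid-looking condition "$\Hom(M, N)$ is isomorphic to a specific module" into a length computation on the cokernel of $A \otimes N^{\vee}$. The behavior of fitting ideals under reduction mod $\Ann N$ and the fact that Proposition \ref{type} applies at depth $t = 0$ are both routine verifications.
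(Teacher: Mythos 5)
Your proof is correct and follows essentially the same route as the paper's: Proposition~\ref{type} at $t=0$ to identify $\mu(M)=r$, Matlis duality to rewrite $\Hom(M,N)$ as $(M\otimes N^{\vee})^{\vee}$, and a length comparison on the tensored minimal presentation to force $\im(A\otimes 1)=0$, i.e.\ $I_{r-1}(M)=I_1(A)\subseteq\Ann N$. The one difference is purely organizational: you pass to the Artinian quotient $R/\Ann N$ at the outset (using the adjunction $\Hom_R(M,N)\cong\Hom_{R/I}(M/IM,N)$ together with base change of Fitting ideals), whereas the paper handles $\dim R=0$ first and then carries out the same reduction at the end; your ordering is arguably a bit cleaner but the content is identical.
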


\begin{cor}\label{fittingM}

If $\dim M=0$ and $\Hom(M,M) \cong M^r$ then $I_{r-1}(M)=\Ann M$. 

\begin{proof}

We always have $\Ann M \subseteq I_{r-1}(M)$.  The result follows from combining this fact with the Theorem \ref{fitting}.

\end{proof}

\end{cor}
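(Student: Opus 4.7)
The plan is to read Corollary \ref{fittingM} as a direct application of Theorem \ref{fitting} with $N=M$, combined with a standard Fitting-ideal inclusion going in the opposite direction.

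First I would record the easy general containment: for any finitely generated module $M$ with $\mu(M)=r$, one has $\Ann(M)\subseteq I_{r-1}(M)$. This I would prove directly from a minimal presentation $R^m\xrightarrow{A}R^r\to M\to 0$. If $a\in\Ann(M)$, then for each standard basis vector $e_i$ of $R^r$ the element $ae_i$ maps to $0$ in $M$ and hence lies in the image of $A$. Choosing column vectors $v_1,\dots,v_r$ with $Av_i=ae_i$ and assembling them into a matrix $B$ gives $AB=aI_r$; the $(i,i)$-entry of this identity expresses $a$ as a sum of products involving entries of $A$, so $a\in I_1(A)=I_{r-1}(M)$.

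Next I would invoke Theorem \ref{fitting} with $N=M$: since $\dim M=0$ and $\Hom(M,M)\cong M^r$, the theorem immediately yields both $\mu(M)=r$ (so the containment from the previous paragraph does apply) and $I_{r-1}(M)M=0$, i.e., $I_{r-1}(M)\subseteq\Ann(M)$. Combining the two inclusions gives the desired equality $I_{r-1}(M)=\Ann(M)$.

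There is essentially no obstacle here beyond Theorem \ref{fitting} itself; once that theorem is in hand, the corollary is just the symmetric case $N=M$ of its conclusion together with a one-line matrix computation. The only thing to be mildly careful about is that the minimal presentation assumption, which is needed for the inclusion $\Ann(M)\subseteq I_{r-1}(M)$ to be phrased in terms of the specific Fitting index $r-1$, is guaranteed automatically by the $\mu(M)=r$ half of Theorem \ref{fitting}.
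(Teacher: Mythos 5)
Your proof is correct and takes the same route as the paper: you combine the general containment $\Ann(M)\subseteq I_{r-1}(M)$ (which the paper simply asserts and you verify by the standard $AB=aI_r$ trick) with the $N=M$ case of Theorem \ref{fitting}, which supplies both $\mu(M)=r$ and the reverse inclusion $I_{r-1}(M)M=0$.
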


\begin{cor} \label{fittingbest}
Suppose $\Hom(M,N) \cong N^r$ and suppose $\Ass N=\Min N$.  Then $I_{r-1}(M)N=0$.

\begin{proof}

For any $\p \in \Min(I_{r-1}(M)N)$, we have $\Hom_{R_{\p}}(M_{\p},N_{\p}) \cong (N_{\p})^r$.  

Since $(I_{r-1}(M)N)_{\p} \hookrightarrow N_{\p}$ it follows that $N_{\p}$ has depth $0$.  Thus $\p \in \Ass(N)=\Min(N)$ and so $N_{\p}$ has finite length.  Theorem \ref{fitting} gives us that $I_{r-1}(M_{\p})N_{\p}=(I_{r-1}(M))_{\p}N_{\p}=(I_{r-1}(M))N)_{\p}=0$.  But this says that $(I_{r-1}(M))N)_{\p}=0$ for all $\p \in \Min(I_{r-1}(M)N)$ which implies $I_{r-1}(M)N=0$.

\end{proof}

\end{cor}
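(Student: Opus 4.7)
The plan is to reduce to the zero-dimensional case already handled by Theorem \ref{fitting} via localization at a carefully chosen prime. Assume for contradiction that $I_{r-1}(M)N \ne 0$. I would then pick $\p$ to be a minimal element of $\Supp(I_{r-1}(M)N)$; equivalently, $\p$ is an associated prime of the submodule $I_{r-1}(M)N \subseteq N$. Since any associated prime of a submodule is associated to the ambient module, this immediately puts $\p \in \Ass(N)$, and the hypothesis $\Ass N = \Min N$ then forces $\p \in \Min N$.

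Next I would localize the given isomorphism at $\p$. Because $M$ and $N$ are finitely generated, $\Hom$ commutes with localization, yielding $\Hom_{R_\p}(M_\p, N_\p) \cong (N_\p)^r$. Since $\p \in \Min N$, the associated primes of $N_\p$ consist only of the maximal ideal $\p R_\p$, so $N_\p$ has finite length over $R_\p$. This puts us exactly in the setting where Theorem \ref{fitting} applies to $M_\p$ and $N_\p$.

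Applying Theorem \ref{fitting} at $R_\p$ gives $I_{r-1}(M_\p)\,N_\p = 0$. Since Fitting ideals commute with localization, $I_{r-1}(M_\p) = I_{r-1}(M)_\p$, and so $(I_{r-1}(M)N)_\p = 0$, directly contradicting the choice of $\p$ as a point in the support. Hence $I_{r-1}(M)N = 0$, as desired.

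The main conceptual point — really the only nontrivial move — is recognizing that picking $\p$ minimal in $\Supp(I_{r-1}(M)N)$ produces a prime where $N_\p$ itself acquires depth zero (via the injection $(I_{r-1}(M)N)_\p \hookrightarrow N_\p$), which is exactly what allows the hypothesis $\Ass N = \Min N$ to force finite length of $N_\p$. Everything after that is mechanical: localization of $\Hom$, localization of Fitting ideals, and invoking the previously established zero-dimensional statement.
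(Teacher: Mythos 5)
Your proposal is correct and follows essentially the same route as the paper: localize at a minimal prime of the support of $I_{r-1}(M)N$, observe that this prime lies in $\Ass(N)=\Min(N)$ so $N_\p$ has finite length, and then apply Theorem~\ref{fitting} together with the fact that Fitting ideals commute with localization. The only cosmetic difference is that you phrase the argument by contradiction and get $\p\in\Ass(N)$ via ``associated primes of a submodule are associated primes of the ambient module,'' whereas the paper argues directly and observes $\depth N_\p=0$ from the injection $(I_{r-1}(M)N)_\p\hookrightarrow N_\p$; these are the same observation in different clothing.
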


\begin{remark}\label{fittingbestsharp}

We remark that the converse to Corollary \ref{fittingbest} does not hold.  Indeed, if $M$ has constant rank $r$, then $I_{r-1}(M)=0$, while $\Hom(M,N)$ need not be isomorphic to $N^r$.  To be more explicit, one could take $R$ with $\depth R=1$ and let $M=\m$ and $N=R$. 
%One can take $R$ to be generically Gorenstein and $M=N=\w_R$ for example.

\end{remark}

\begin{lem}\label{PM}
Let $M, N, P$ be nonzero $R-$modules such that $\Ass(P) \subseteq \Ass(N)=\Min(N)$. Suppose that $\Hom_R(M, N) \cong \Hom_R(P, N)$ and $\Ext_{R}^1(M, N)=0$. Then if $P\twoheadrightarrow M$, $P \cong M$. 
\end{lem}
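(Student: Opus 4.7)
The plan is to take the kernel $K$ of the surjection $P \twoheadrightarrow M$ and exploit the long exact sequence of $\Hom$. Concretely, set $K = \ker(P \twoheadrightarrow M)$, so that we have a short exact sequence
\[
0 \to K \to P \to M \to 0.
\]
Applying $\Hom_R(-,N)$ and using $\Ext^1_R(M,N)=0$ produces the four-term exact sequence
\[
0 \to \Hom_R(M,N) \to \Hom_R(P,N) \to \Hom_R(K,N) \to 0.
\]
The goal is to show $\Hom_R(K,N)=0$, and then to bootstrap this to $K=0$ using the associated prime hypothesis.

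For the first step, the key observation is that $\Min(N)=\Ass(N)$ forces $N_\p$ to have finite length over $R_\p$ for every $\p \in \Ass(N)$ (such $\p$ is minimal in $\Supp N$). Localizing the displayed short exact sequence at such a $\p$, both $\Hom_R(M,N)_\p$ and $\Hom_R(P,N)_\p$ have finite length, and by the hypothesis $\Hom_R(M,N) \cong \Hom_R(P,N)$ they are isomorphic as $R_\p$-modules, hence have \emph{equal} length. Therefore the cokernel $\Hom_R(K,N)_\p$ must vanish. Thus $\Hom_R(K,N)_\p = 0$ for every $\p \in \Ass(N)$.

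To conclude $K=0$, I will appeal to the standard identity $\Ass_R \Hom_R(K,N) = \Supp_R K \cap \Ass_R N$ (valid for finitely generated modules over a Noetherian ring). The previous step says $\Hom_R(K,N)$ has no associated prime in $\Ass(N)$, so combined with this identity we get $\Supp K \cap \Ass N = \emptyset$. On the other hand, from $K \subseteq P$ we have $\Ass K \subseteq \Ass P \subseteq \Ass N$, and always $\Ass K \subseteq \Supp K$; together these force $\Ass K \subseteq \Supp K \cap \Ass N = \emptyset$, which for a finitely generated module over a Noetherian ring means $K = 0$. Hence $P \cong M$.

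I do not anticipate a genuine obstacle here; the only thing to be careful about is to use the \emph{isomorphism} $\Hom_R(M,N) \cong \Hom_R(P,N)$ purely as an equality of $R$-module lengths after localizing, rather than trying to identify it with the natural map in the exact sequence. Once the argument is localized at primes in $\Min(N)$, finite length kicks in and the rest is formal.
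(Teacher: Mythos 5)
Your proof is correct and takes essentially the same route as the paper: form the kernel, apply $\Hom(-,N)$ with the $\Ext^1$-vanishing to get the three-term exact sequence, localize at primes of $\Ass(N)=\Min(N)$ where $N$ becomes finite length so that the isomorphism forces $\Hom(K,N)_\p=0$, invoke $\Ass\Hom(K,N)=\Supp K\cap\Ass N$, and finish via $\Ass K\subseteq\Ass P\subseteq\Ass N$. The paper phrases the local step as $X_\p=0$ for $\p\in\Ass(N)$ and then deduces $\Hom(X,N)=0$, whereas you deduce $\Hom(K,N)_\p=0$ first and then $\Supp K\cap\Ass N=\emptyset$; this is only a reordering of the same deductions, not a different argument.
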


\begin{proof}
First assume that $\dim(N)=0$. From the exact sequence 
$0\rightarrow X\rightarrow P\rightarrow M\rightarrow 0$ we have
$$ 0\rightarrow \Hom(M, N)\rightarrow \Hom(P, N)\rightarrow \Hom(X, N)\rightarrow 0.$$
By assumption $l(\Hom(X, N))= 0$. Therefore $X=0$.

If $\dim(N)> 0$, for any $\p \in \Min(N) = \Ass(N)$, $X_{\p} = 0$. So $\Ass(\Hom(X,N))= \Supp(X) \cap \Ass(N) = \emptyset$, and thus $\Hom(X,N)=0$. 

Now, if $X\neq 0$,  take $\q \in  \Min(X)$. Then $\q \in \Ass(P) \subseteq \Ass(N)$. But then $\Hom(X_{\q},N_{\q}) \neq 0$, a contradiction.   
\end{proof}

\begin{theorem}\label{Mfree}
Assume that  $\depth(M) \geqslant t$, $\depth(N)=t$, $\Ass(N)=\Min(N)$, and for some $s\geq t$, $\Ext^{1 \leqslant i \leqslant s} _R (M, N)=0$. If $\Hom(M, N) \cong N^{r}$ for some $r \in \N$, then $M/IM \cong (R/I)^r$ for $I=\Ann(N)$.

Furthermore, if  one of the following holds:
\begin{enumerate}
\item $N$ is faithful. 
\item $\Ass(R) \subseteq \Ass(N)$ and $s>0$.  
\end{enumerate}  
then $M \cong R^{r}$.
\end{theorem}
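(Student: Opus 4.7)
The plan is to derive two structural constraints on $M$---namely $\mu(M)=r$ and $I_{r-1}(M) \subseteq I$---combine them into the isomorphism $M/IM \cong (R/I)^r$, and then upgrade this to $M \cong R^r$ under each of the extra hypotheses. To get $\mu(M) = r$, I would invoke Proposition \ref{type}; its hypotheses are met since $\depth M, \depth N \geq t$ and $s \geq t$ gives the required $\Ext$-vanishing, and its conclusion reads $\mu(M)\,\upsilon_t(N) = \upsilon_t(\Hom(M,N)) = r\,\upsilon_t(N)$, where the cancellation of $\upsilon_t(N) > 0$ is permitted because $\depth N = t$. For the Fitting containment, I would apply Corollary \ref{fittingbest} (using only $\Ass N = \Min N$) to obtain $I_{r-1}(M)\,N = 0$, equivalently $I_{r-1}(M) \subseteq \Ann N = I$.

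Fixing a minimal presentation $R^m \xrightarrow{A} R^r \to M \to 0$ (available because $\mu(M) = r$), the Fitting ideal $I_{r-1}(M)$ is literally the ideal generated by the entries of $A$; so $I_{r-1}(M) \subseteq I$ says $\bar A = 0$ in $R/I$, and reducing the presentation modulo $I$ yields $M/IM \cong (R/I)^r$, the first assertion of the theorem.

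Case (1) is immediate, since $N$ faithful means $I = 0$. For case (2), I would lift the isomorphism $(R/I)^r \cong M/IM$ to a surjection $\pi\colon R^r \twoheadrightarrow M$ (choose lifts in $M$ of a basis of $M/IM$; surjectivity follows from Nakayama together with $\mu(M)=r$). Its kernel $K$ lies in $IR^r$, because the composite $K \hookrightarrow R^r \twoheadrightarrow M \twoheadrightarrow M/IM$ vanishes and the quotient $R^r \twoheadrightarrow M/IM$ factors through $(R/I)^r$. Applying $\Hom_R(-,N)$ with $\Ext^1_R(M,N) = 0$ (valid since $s > 0$) produces the short exact sequence
\[ 0 \to \Hom_R(M,N) \to N^r \to \Hom_R(K,N) \to 0. \]
Since $IN = 0$, every map $R^r \to N$ automatically vanishes on $IR^r \supseteq K$, so the first arrow is already surjective, hence an isomorphism, and therefore $\Hom_R(K,N)=0$. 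If $K \neq 0$, pick $\p \in \Ass K \subseteq \Ass R \subseteq \Ass N$; then $k(\p) \hookrightarrow K_\p$ yields $0 \neq \Hom_{R_\p}(k(\p),N_\p) \hookrightarrow \Hom_R(K,N)_\p = 0$, a contradiction. Hence $K = 0$ and $M \cong R^r$.

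The main technical point is the case (2) argument, where one must arrange $\pi$ so that $K \subseteq IR^r$ and then combine both extra hypotheses---$\Ext^1$-vanishing to obtain the four-term sequence, and $\Ass R \subseteq \Ass N$ to kill $K$ via its associated primes. Everything else is a clean application of Proposition \ref{type} and Corollary \ref{fittingbest}, together with the elementary observation that, when $\mu(M) = r$, the Fitting ideal $I_{r-1}(M)$ is generated by the entries of any minimal presentation matrix.
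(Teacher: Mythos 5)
Your derivation of $\mu(M)=r$ (via Proposition \ref{type}) and $I_{r-1}(M)\subseteq I$ (via Corollary \ref{fittingbest}), yielding $M/IM \cong (R/I)^r$, matches the paper's proof exactly, and so does case (1). Where you diverge is case (2): the paper simply invokes Lemma \ref{PM} with $P = R^r$, whose proof shows $\Hom(X,N)=0$ by a length count after localizing at minimal primes of $N$ and then kills $X$ via associated primes. You instead exploit the containment $K \subseteq IR^r$ (correctly extracted from $M/IM \cong (R/I)^r$) together with $IN=0$ to see at once that the restriction $\Hom(R^r,N)\to\Hom(K,N)$ vanishes, so that $\Ext^1_R(M,N)=0$ forces $\Hom(K,N)=0$. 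This is a valid and, in this special case $P=R^r$, more self-contained route: it avoids Lemma \ref{PM} entirely. The trade-off is that your case (2) genuinely depends on the first conclusion of the theorem (the freeness of $M/IM$), whereas the paper's appeal to Lemma \ref{PM} only needs $\mu(M)=r$ to produce the surjection $R^r\twoheadrightarrow M$.

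One small slip in your final paragraph: the injection $k(\p)\hookrightarrow K_\p$ induces a map $\Hom_{R_\p}(K_\p,N_\p)\to\Hom_{R_\p}(k(\p),N_\p)$, not the reverse, so the displayed inclusion points the wrong way. The intended conclusion is still correct; fix it either by composing the surjection $K_\p \twoheadrightarrow K_\p/\p R_\p K_\p \twoheadrightarrow k(\p)$ (nonzero since $\p\in\Supp K$) with $k(\p)\hookrightarrow N_\p$ (nonzero since $\p\in\Ass N$) to exhibit a nonzero element of $\Hom(K,N)_\p$, or by invoking $\Ass\Hom_R(K,N)=\Supp K\cap\Ass N$ as the paper's Lemma \ref{PM} does. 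This is a notational misstep rather than a gap.
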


\begin{proof}
By Proposition \ref{type}, $v_t(N^{r})=\mu (M) \cdot v_t(N)$. Hence $\mu (M)=r$.
Since $\Ass(N)=\Min(N)$, Corollary \ref{fittingbest} tells us that $I_{r-1}(M)\subseteq I$. Since $M/IM$ is still $r$-generated over $R/I$, it must be a free $R/I$ module of rank $r$. 

For the furthermore statements, if $I=0$ then $M \cong R^r$. 
Assume the second set of conditions.  By Lemma \ref{PM}, we have $M \cong R^{r}$.
\end{proof}

\begin{cor}\label{extension}
Let $R \rightarrow S$ be a finite local homomorphism of local rings. Assume that $S$ is regular of dimension $t$, $\depth M \ge t$, and $\Ext^{1 \le i \le t}_R(M,S)=0$.  If one of the following holds 

\begin{enumerate}

\item $S$ is faithful as an $R$-module.

\item $\Ass_R R \subseteq \Ass_R S$.

\end{enumerate}

Then $M$ is free.

\end{cor}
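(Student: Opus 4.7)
The plan is to apply Theorem \ref{Mfree} with $N = S$ and $s = t$. The only nontrivial input not already supplied by the hypotheses is the isomorphism $\Hom_R(M, S) \cong S^r$ (for some $r$), which I would produce by exploiting the natural $S$-module structure on this Hom.

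First I would verify the soft hypotheses of Theorem \ref{Mfree}. Since $R \to S$ is a finite local map, $\mathfrak{m}_R S$ is $\mathfrak{m}_S$-primary, so $\depth_R(S) = \depth_S(S) = \dim S = t$ because $S$ is regular. Being regular local, $S$ is a domain, so $\Ass_S(S) = \{(0)\}$ and therefore $\Ass_R(S) = \{\ker(R \to S)\}$ consists of a single prime, giving $\Ass_R(S) = \Min_R(S)$ trivially. The condition $\depth_R(M) \ge t$ is given.

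The heart of the proof is showing that $\Hom_R(M, S)$ is a free $S$-module. I would take a free resolution $F_\bullet \twoheadrightarrow M$ over $R$ with $F_i = R^{l_i}$ for $i \le t$. Applying $\Hom_R(-, S)$ and invoking the vanishing $\Ext^{1 \le i \le t}_R(M, S) = 0$ yields an exact sequence of $S$-modules
\[0 \to \Hom_R(M, S) \to S^{l_0} \to S^{l_1} \to \cdots \to S^{l_t} \to C \to 0.\]
Splitting this into short exact sequences and applying the depth lemma iteratively from right to left (using $\depth_S S^{l_i} = t$), one obtains $\depth_S \Hom_R(M, S) \ge t$. Since $\dim S = t$, equality holds; and since $S$ is regular, every finitely generated $S$-module has finite projective dimension, so Auslander--Buchsbaum forces $\pd_S \Hom_R(M, S) = 0$. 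Hence $\Hom_R(M, S) \cong S^r$ as $S$-modules, and therefore also as $R$-modules via the structure map $R \to S$.

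Theorem \ref{Mfree} now applies directly, yielding $M/IM \cong (R/I)^r$ for $I = \Ann_R(S)$. Under hypothesis (1), $I = 0$, hence $M \cong R^r$. Under hypothesis (2), assuming $t \ge 1$, the second furthermore clause of Theorem \ref{Mfree} applies with $s = t > 0$ and gives $M \cong R^r$. The principal obstacle is the $S$-freeness of $\Hom_R(M, S)$; once that is secured via the depth estimate and Auslander--Buchsbaum, Theorem \ref{Mfree} handles the rest.
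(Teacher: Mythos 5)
Your proof is correct and follows the same approach as the paper: show that $\Hom_R(M,S)$ is a maximal Cohen--Macaulay $S$-module via the $\Ext$-vanishing and syzygy/depth argument, conclude it is $S$-free by Auslander--Buchsbaum over the regular ring $S$, and then invoke Theorem \ref{Mfree}. You simply spell out the depth computation and the verification of the hypotheses of Theorem \ref{Mfree} (including the $\Ass_R(S)=\Min_R(S)$ observation, which the paper leaves implicit) in more detail than the paper's two-line proof.
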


\begin{proof}
Since $\Ext^{1 \le i \le t}_R(M,S)=0$, $\Hom_R(M,S) \in \CM(S)$, but since $S$ is regular we have $\Hom_R(M, S) \cong S^{l}$ for some $l \in \N$. Hence $M$ is free, by Theorem \ref{Mfree}.
\end{proof}

The following example shows that the conditions of \ref{Mfree} and \ref{extension} are needed. 

\begin{example}\label{conditionsneeded}

Let $R=k \llbracket x,y \rrbracket/(xy)$, let $S=R/(x)$ and $M=R/(x)$ as in Corollary \ref{extension}.  Then $\Ext^1_R(M,S)=0$ but of course $M$ is not free.

\end{example}

It is worth noting that our results in this section can also be viewed as modest confirmation of the Auslander-Reiten conjecture. For example Theorem \ref{Mfree} gives:

\begin{cor}\label{MM}
Let $M = R/I$ and $\depth M=t$. Assume that $\Ass(R)\subseteq \Ass(M)=\Min(M)$. Then $M$ is free if $\Ext^{1\leq i\leq \max\{1,t\}}_R(M,M)=0$.  
\end{cor}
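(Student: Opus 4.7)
The plan is to derive Corollary \ref{MM} as an immediate application of Theorem \ref{Mfree} with $N = M = R/I$ and $r = 1$. The first step is to observe that $\Hom_R(R/I, R/I) \cong R/I$ canonically: a homomorphism out of $R/I$ is determined by the image of the class of $1$, and any element of $R/I$ is a valid image because the annihilator of $R/I$ is precisely $I$. Thus the hypothesis $\Hom(M,N) \cong N^{r}$ of Theorem \ref{Mfree} is fulfilled with $r = 1$, and $\Ann(N) = I$ as required.

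Next, I would verify the remaining hypotheses of Theorem \ref{Mfree}. The depth conditions $\depth M = \depth N = t$ hold automatically, while $\Ass(N) = \Min(N)$ and $\Ass(R) \subseteq \Ass(N)$ are exactly what the corollary assumes, so condition (2) of the furthermore clause of Theorem \ref{Mfree} will apply, provided the parameter $s$ can be chosen to be strictly positive. The natural choice is $s = \max\{1, t\}$: this satisfies $s \geq t$ (needed for the main hypothesis of Theorem \ref{Mfree}) and $s \geq 1$ (needed for the second furthermore clause). With this choice, the vanishing $\Ext^{1 \leq i \leq \max\{1,t\}}_R(M, M) = 0$ assumed in the corollary is exactly $\Ext^{1 \leq i \leq s}_R(M, N) = 0$. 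Theorem \ref{Mfree} therefore yields $M \cong R^{r} = R$, which is to say $I = 0$ and $M$ is free.

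There is no real obstacle to overcome; the only subtle point is the role of $\max\{1, t\}$ in the Ext-vanishing range of the statement. This formulation is precisely what is needed to handle the edge case $t = 0$, where Theorem \ref{Mfree} still requires a positive value of $s$ in order to invoke condition (2) of its furthermore clause. When $t \geq 1$ the range is simply $1 \leq i \leq t$, matching the depth exactly.
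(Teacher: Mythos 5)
Your proposal is correct and is essentially the same as the paper's proof, which simply notes $\Hom_R(M,M)\cong M$ and invokes Theorem \ref{Mfree}; you have just spelled out the routine verifications (taking $r=1$, $s=\max\{1,t\}$, checking $s>0$ so that condition (2) of the furthermore clause applies) that the paper leaves implicit. Your observation about the role of $\max\{1,t\}$ in the $t=0$ case is exactly the right reading of why the statement is phrased that way.
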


\begin{proof}
Obviously $\Hom_R(M,M)\cong M$, so we can apply Theorem \ref{Mfree}.
\end{proof}

\section{Some other applications}
In this section we treat some similar problems that have appeared in the literature. The first one involves tests for Gorensteiness, in the spirit of \cite{UL84}.  Throughout this section we assume $R$ is a Cohen-Macaulay local ring with $\dim R=d$ and with canonical module $\w$.

\begin{cor}\label{testgor}
Suppose $\Ext^{1 \le i \le d}_R (M,R)=0$ and $M$ is Cohen-Macaulay in codimension $1$. If $M^{\vee} \cong M^* $ then $R$ is Gorenstein.
\end{cor}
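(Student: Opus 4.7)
\emph{Plan.}  The goal is to establish $\omega_R \cong R$; the strategy is to run the module $M^\vee\cong M^*$ through the machinery of the two categories $\Omega\Deep(R)$ and $\DF(R)$ developed earlier.

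First, I would use the hypothesis $\Ext^{1\le i\le d}_R(M,R)=0$ to place $M^*$ inside $\Omega\Deep(R) \cap \CM(R)$. Dualizing a minimal free resolution $\cdots\to F_1\to F_0\to M\to 0$, the complex $0\to M^*\to F_0^*\to F_1^*\to\cdots\to F_d^*\to\cdots$ is exact in degrees $1,\ldots,d$. A standard depth chase through the kernels $C_i=\ker(F_i^*\to F_{i+1}^*)$ — starting from $C_d\hookrightarrow F_d^*$, which has depth at least $1$, and iterating the depth lemma on $0\to C_i\to F_i^*\to C_{i+1}\to 0$ — gives $\depth C_{d-k}\ge k+1$, so $\depth M^*=\depth C_0\ge d$, and the short exact sequence $0\to M^*\to R^{l_0}\to C_1\to 0$ with $C_1\in\CM(R)$ shows $M^*\in\Omega\Deep(R)$.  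Via the isomorphism $M^\vee\cong M^*$, the module $M^\vee$ is likewise MCM and lies in $\Omega\Deep(R)$.

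Next, I would argue that $M^\vee\in\DF(R)$.  Recall the characterization in the CM case (from the Remark following the definition of $\DF$): $\DF(R)=\{X^\vee : X\in\CM(R),\ X^n\twoheadrightarrow\omega\text{ for some }n\}$.  By MCM-reflexivity, $M^\vee\cong (M^{\vee\vee})^\vee$, so the task reduces to producing a surjection $(M^{\vee\vee})^n\twoheadrightarrow\omega$.  Maps $M^{\vee\vee}\to\omega$ are parametrized by $M^{\vee\vee\vee}\cong M^\vee$; the given isomorphism $M^\vee\cong M^*$ together with the embedding $M^*\hookrightarrow R^{l_0}$ from the first step provides an abundance of such maps, and the CM-in-codimension-$1$ hypothesis should pin down enough local structure (at minimal and height-$1$ primes) to force the total image to fill out $\omega$.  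With $M^\vee\in\Omega\Deep(R)\cap\DF(R)$, Lemma~\ref{intersectfreesummand} then yields $R\mid M^\vee$.

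Finally, applying $\Hom_R(-,\omega)$ to the summand inclusion $R\hookrightarrow M^\vee$ gives $\omega\mid M^{\vee\vee}$.  Since $M^{\vee\vee}$ is MCM and coincides with $M$ in codimension $1$ by hypothesis (and $M^{\vee\vee}$ satisfies $(S_2)$), the biduality map $M\to M^{\vee\vee}$ is an isomorphism, so $\omega\mid M$.  Because Ext-vanishing descends to direct summands, $\Ext^{1\le i\le d}_R(\omega,R)=0$; by the standard equivalence (Ext-vanishing of the canonical module against $R$ in degrees $1,\ldots,d$ characterizes the Gorenstein property), $R$ is Gorenstein.

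The main obstacle is the middle step: verifying $M^\vee\in\DF(R)$ requires carefully leveraging the iso $M^\vee\cong M^*$ together with the $R$-dual embedding of $M^*$ (and the CM-in-codimension-$1$ hypothesis) to produce the required surjection $(M^{\vee\vee})^n\twoheadrightarrow\omega$; everything else is either a direct application of the preceding lemmas or a standard homological fact.
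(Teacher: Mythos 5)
The approach you sketch is fundamentally different from the paper's, and the step you flag as an ``obstacle'' is in fact a genuine gap that cannot be closed: the claim $M^\vee\in\DF(R)$ is simply false in general under the given hypotheses. Consider $R=k\llbracket x,y\rrbracket/(xy)$ (Gorenstein of dimension one, so the conclusion holds trivially) and $M=R/(x)$. Then $M$ is MCM, $\Ext^1_R(M,R)=0$, $M^\vee\cong M^*\cong(0:_Rx)=(y)$, yet there is no embedding $R\hookrightarrow (M^\vee)^n$ (every element of $(y)^n$ is killed by $x$), so $M^\vee\notin\DF(R)$. Equivalently, in the canonical-module description of $\DF(R)$, you would need $M^n\twoheadrightarrow\omega\cong R$, i.e.\ $R\mid M^n$, and there is no reason for that. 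Your plan derives $R\mid M^\vee$ and then $\omega\mid M$, which is a much stronger statement than the corollary asserts; a non-free indecomposable MCM module over a Gorenstein ring already shows it cannot hold.

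The paper's actual argument avoids the free-summand machinery entirely and is a short type computation. From $\Ext^{1\le i\le d}_R(M,R)=0$ and Proposition~\ref{type} one gets $\upsilon_d(M^*)=\mu(M)\,\upsilon_d(R)$. On the other side, $M$ is MCM (via $M\cong M^{\vee\vee}\cong (M^*)^\vee$), and applying Proposition~\ref{type} to $\Hom_R(M,\omega)$ (where the $\Ext$'s vanish automatically since $M$ is MCM) gives $\upsilon_d(M^\vee)=\mu(M)\,\upsilon_d(\omega)=\mu(M)$. Since $M^\vee\cong M^*$, equating the two gives $\upsilon_d(R)=1$, i.e.\ $R$ has type one and is Gorenstein. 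You should replace the $\DF(R)$ route with this numerical comparison; nothing about ``$R$ is a summand'' is needed or available here.
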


\begin{proof}
Since $M$ is Cohen-Macaulay in codimension $1$, the natural map $M \to M^{\vee \vee}$ is an isomorphism in codimension $1$, thus an isomorphism.  Since $\Ext^{1 \le i \le d}(M,R)=0$ it follows, as in the proof of Lemma \ref{summandlem} that $M^* \in \CM(R)$ and so $(M^*)^{\vee} \cong M^{\vee \vee} \cong M \in \CM(R)$.  

By assumption and Proposition \ref{type} we have $$ \upsilon_d({M^\vee})= \upsilon_d(M^*) = \mu(M)\upsilon_d(R).$$
Since $\upsilon_d(M^\vee) = \mu(M)$ (one can appeal to Proposition \ref{type} again), $R$ has type one, and so is Gorenstein.
\end{proof}

\begin{remark}
The above was inspired by Theorem 2.1 of \cite{UL84}. The situation there is as follows. Let $R \rightarrow S$ be a finite extension with $\dim S=\dim R$ and $S$ is Cohen-Macaulay, local, and factorial. Under mild conditions, $\Hom_R(S,R)$ is isomorphic as an $S$ module to a rank one reflexive ideal of $S$, thus $\Hom_R(S,R)\cong S$. Also $\Hom_R(S,\w_R)\cong \w_S\cong S$. One can now appeal to \ref{testgor}, with $M=S$ to give an $\Ext$-vanishing test for the Gorensteiness of $R$.
\end{remark}

For completeness, we give the following, which extends \cite[Lemma 2.1]{HH05}, \cite[Theorem 2.7]{JO09}, and  \cite[Theorem 5.9]{HJ03}.

\begin{lem}\label{genhunekehanes}
Let $M,N \in \CM(R)$  Consider the conditions:

\begin{enumerate}

\item $\Ext_R^{1 \le i \le d}(M,N)=0$. 

\item $M \otimes N^{\vee}$ is in $\CM(R)$.

\end{enumerate}

Then $(1) \Rightarrow (2)$. If $\Ext_R^{1 \le i \le d}(M,N)$ have finite length then $(2) \Rightarrow (1)$.

\end{lem}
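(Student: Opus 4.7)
The plan leverages the identities coming from $N \cong \Hom_R(N^\vee, \omega)$ (valid as $N \in \CM(R)$): the ordinary one $\Hom_R(M, N) = (M \otimes_R N^\vee)^\vee$, and the derived one $\mathbf{R}\Hom_R(M, N) \simeq \mathbf{R}\Hom_R(M \otimes^L_R N^\vee, \omega)$ (also using $\Ext^{>0}_R(N^\vee, \omega) = 0$). Reapplying $\mathbf{R}\Hom_R(-, \omega)$ and using that $\omega$ is dualizing gives $M \otimes^L_R N^\vee \simeq \mathbf{R}\Hom_R(\mathbf{R}\Hom_R(M, N), \omega)$. Together these produce two convergent hyper-derived spectral sequences
\[ {}_1E_2^{p, q} = \Ext^p_R(\Tor^R_q(M, N^\vee), \omega) \Rightarrow \Ext^{p+q}_R(M, N), \]
\[ {}_2E_2^{p, q} = \Ext^p_R(\Ext^{-q}_R(M, N), \omega) \Rightarrow H^{p+q}(M \otimes^L_R N^\vee). \]

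For $(1) \Rightarrow (2)$: applying $\Hom_R(-, N)$ to a free resolution of $M$ and invoking (1), the iterated depth lemma gives $\Hom_R(M, N) \in \CM(R)$, whence $(M \otimes N^\vee)^{\vee \vee} = \Hom_R(M, N)^\vee \in \CM(R)$. On the diagonal $p + q = 0$ of ${}_2E_2$, the entries $(k, -k)$ with $1 \le k \le d$ vanish by (1) and those with $k > d$ vanish since $\Ext^{>d}(-, \omega) = 0$; only $E_\infty^{0, 0} = (M \otimes N^\vee)^{\vee \vee}$ survives, and for degree reasons no differentials touch $(0, 0)$. The resulting edge identification of $H^0(M \otimes^L_R N^\vee) = M \otimes N^\vee$ with $(M \otimes N^\vee)^{\vee \vee}$ is the natural biduality map, so $M \otimes N^\vee \cong (M \otimes N^\vee)^{\vee \vee} \in \CM(R)$.

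For $(2) \Rightarrow (1)$ I would induct on $d$ (trivial at $d = 0$). For the base $d = 1$, hypothesis (2) kills ${}_1E_2^{p, 0}$ for $p > 0$, and the only filtration piece of $\Ext^1_R(M, N)$ is $E_\infty^{0, 1} = \Hom_R(\Tor^R_1(M, N^\vee), \omega)$. The finite-length hypothesis makes this module finite length, but its associated primes are contained in $\Ass(\omega) = \Min(R)$, which excludes $\m$ when $d \ge 1$, forcing it to vanish. For $d \ge 2$, choose $x \in \m$ regular on $R, \omega, M, N$, and $M \otimes N^\vee$; the identity $\Hom_R(N/xN, \omega) = 0$ (since $\omega$ has positive depth) makes $x$ regular on $N^\vee$ as well, so reduction modulo $x$ preserves the MCM hypotheses: $\bar M, \bar N \in \CM(\bar R)$ with $\bar M \otimes_{\bar R} (\bar N)^\vee = \overline{M \otimes N^\vee} \in \CM(\bar R)$.

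Using the change-of-rings $\Ext^i_{\bar R}(\bar M, \bar N) \cong \Ext^i_R(M, \bar N)$ together with the long exact sequence of $0 \to N \xrightarrow{x} N \to \bar N \to 0$ yields
\[ 0 \to \Ext^i_R(M, N)/x \to \Ext^i_{\bar R}(\bar M, \bar N) \to \Ext^{i+1}_R(M, N)[x] \to 0, \]
so the finite-length hypothesis transfers to $\bar R$ for $1 \le i \le d - 1$. By induction $\Ext^i_{\bar R}(\bar M, \bar N) = 0$ there, giving $\Ext^i_R(M, N) = x \cdot \Ext^i_R(M, N)$ for $1 \le i \le d - 1$, which by finite length forces $\Ext^i_R(M, N) = 0$; the vanishing of $\Ext^d_R(M, N)[x]$ combined with finite length then kills $\Ext^d_R(M, N)$ as well. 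The main obstacle will be the careful spectral sequence bookkeeping, particularly identifying the edge isomorphism in $(1) \Rightarrow (2)$ with the natural biduality map, and handling the $d = 1$ case of $(2) \Rightarrow (1)$ directly, since reduction to $d = 0$ gives no information.
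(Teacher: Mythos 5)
Your proof is correct, but it takes a noticeably different (and heavier) route than the paper's. For $(1)\Rightarrow(2)$ the paper avoids derived categories entirely: having shown $\Hom_R(M,N)\in\CM(R)$, it notes $e(\underline x,\Hom(M,N))=e(\underline x, M\otimes N^\vee)$ by the associativity formula (the two modules are Matlis dual at each minimal prime), then computes $e(\underline x,\Hom(M,N))=l(\overline{\Hom(M,N)})=l(\Hom(\overline M,\overline N))=l(\overline M\otimes\overline N^\vee)=l(\overline{M\otimes N^\vee})$, forcing $M\otimes N^\vee$ to be MCM by the length-vs-multiplicity criterion. Your spectral-sequence argument reaches the same place, but note that the ``main obstacle'' you flag at the end --- identifying the edge map with the biduality map --- is not actually necessary: once you know the abutment $H^0(M\otimes^{\mathbf L}N^\vee)=M\otimes N^\vee$ has $E^{0,0}_\infty=(M\otimes N^\vee)^{\vee\vee}=\Hom_R(M,N)^\vee$ as its only nonzero filtration piece, the resulting abstract isomorphism with an MCM module already gives $(2)$; the map need not be the canonical one. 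For $(2)\Rightarrow(1)$ the structure is again different: the paper handles $d=1$ directly by choosing a regular $x$ with $x\Ext^1_R(M,N)=0$ and comparing the four-term sequence $0\to\Hom(M,N)\xrightarrow{x}\Hom(M,N)\to\Hom(\overline M,\overline N)\to\Ext^1(M,N)\to 0$ with the isomorphism $\overline{\Hom(M,N)}\cong\Hom(\overline M,\overline N)$ that $(2)$ yields, and for $d>1$ picks $x$ regular and annihilating every $\Ext^i_R(M,N)$ so that the long exact sequence breaks into short exact sequences $0\to\Ext^i\to\Ext^i_{\overline R}(\overline M,\overline N)\to\Ext^{i+1}\to 0$, from which the inductive vanishing over $\overline R$ finishes things. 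Your version uses an arbitrary regular $x$ and instead closes the loop with Nakayama on the finite-length modules $\Ext^i(M,N)$, and your $d=1$ base case goes through the second spectral sequence plus a depth/associated-primes argument rather than the direct computation. Both approaches are sound; yours is a clean proof for someone fluent with dualizing complexes, while the paper's sticks to its stated preference for elementary, spectral-sequence-free arguments and is somewhat shorter at both steps.
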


\begin{proof}

Suppose $\underline{x}$ is a regular sequence.  Set $e(M):=e(\underline{x},M)$.  First
we have $e(\Hom(M,N))=e(M \otimes N^{\vee})$ by the Associativity formula (\cite[Theorem 4.6.8]{BH93}).  Then, since $\Hom(M,N) \in \CM(R)$, we have 
\[e(\Hom(M,N))=l(\overline{\Hom(M,N)})=l(\Hom(\overline{M},\overline{N}))=l(\overline{M} \otimes \overline{N}^{\vee}).\]
But this says $e(M \otimes N^{\vee})=l(\overline{M} \otimes \overline{N}^{\vee})=l(\overline{M \otimes N^{\vee}})$ from which we deduce that $M \otimes N^{\vee}$ is MCM.

%facts used: e(\underline{x},X)=\sum_{\p \in \Ass(X)} e(\underline{x}/\p,R/\p) l(X_\p), and e(\underline{x},X)=l(X/\underline{x}X) iff X is MCM

For the converse, first consider the case where where $\dim R=1$. Let $x$ be $R$-regular and $x\Ext^1_R(M,N)=0$.  The short exact sequence $0 \rightarrow M \xrightarrow{x} M \rightarrow M/xM \rightarrow 0$ induces the exact sequence 
\[0 \rightarrow \Hom(M,N) \xrightarrow{x} \Hom(M,N) \rightarrow \Hom(M/xM,N/xN) \rightarrow \Ext^1(M,N) \rightarrow 0.\]
In this case, it suffices then, to show that $\Hom(M,N)/x\Hom(M,N) \cong \Hom(M/xM,N/xN)$.  Since $M \otimes N^{\vee}$ is MCM, we see that $\dfrac{(M \otimes N^{\vee})^{\vee}}{x(M \otimes N^{\vee})^{\vee}} \cong \Hom(\dfrac{M \otimes N^{\vee}}{x(M \otimes N^{\vee})},\dfrac{\w}{x\w})$. But since $(M\otimes N^{\vee})^{\vee} \cong \Hom(M,N)$, this gives us that $\Hom(M,N)/x\Hom(M,N) \cong \Hom(M/xM,N/xN)$ so that $\Ext^1(M,N)=0$.  

Now suppose $\dim R>1$ and choose a regular $x \in \bigcap_{1 \le i \le d} \Ann(\Ext^i(M,N))$.  Then the long exact sequence in $\Ext$, coming from the short exact sequence $0 \rightarrow M \xrightarrow{x} M \rightarrow M/xM \rightarrow 0$, decomposes into short exact sequences 
\[0 \rightarrow \Ext^i(M,N) \to \Ext^{i+1}(M/xM,N) \to \Ext^{i+1}(M,N) \to 0\]
for each $1 \le i \le d-1$.  Further, we have $\Ext^{i+1}(M/xM,N) \cong \Ext^{i}_{R/xR}(M/xM,N/xN)$ for each $1 \le i \le d-1$.  Thus it suffices to show $\Ext^{i}_{R/xR}(M/xM,N/xN)=0$ for each $1 \le i \le d-1$.  But since $M \otimes N^{\vee}$ MCM implies $M/xM \otimes (N/xN)^{\vee}$ is MCM over $R/xR$, this follows from induction, and we're done.

\end{proof}

To explore the previous Theorem a bit more, we make the following definition.  A pair of modules $M,N \in \CM(R)$ is called {\it tight} if $\Ext_R^{1\leq i \leq d}(M,N)=0$ forces $\Ext_R^i(M,N)=0$ for all $i>0$.  

\begin{remark}
A pair $(M,N)$ of maximal Cohen-Macaulay modules is tight in any of the following situations:
\begin{enumerate}

\item $M$ is free or $N$ has finite injective dimension. 

\item $M$ is locally free in codimension one, $M^*$ is maximal Cohen-Macaulay and $N= (M^{*})^{\vee}$ (when $R$ is Goresntein the last two conditions simply mean $N=M$). Here, the vanishing of $\Ext^{1\leq i\leq d}_R(M,N)$ forces $M\otimes_RM^*$ to be Cohen-Macaulay by Theorem \ref{genhunekehanes}, so the map $i:M\otimes M^* \to \Hom_R(M,M)$ would be an isomorphism as $i$ is already an isomorphism in codimension one. So $M$ is free.

\item $M$ has finite complete intersection dimension and the complexity of $M$ is at most $d-1$ \cite[Theorem 1.2]{CD11}.

\end{enumerate}

\end{remark}

\begin{cor}\label{tight}

Suppose $R$ is Cohen-Macaulay with canonical module $\omega$ and let $M,N\in \CM(R)$ such that for all $\p \in \Spec(R)-\{\m\}$, the pair $(M_{\p}, N_{\p})$ is tight. Then the following are equivalent:

\begin{enumerate}

\item $\Ext^{1 \le i \le d}_R(M,N)=0$. 

\item $M \otimes N^{\vee} \in \CM(R)$ .

\end{enumerate}

\end{cor}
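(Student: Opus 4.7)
The direction $(1) \Rightarrow (2)$ is immediate from the first half of Lemma \ref{genhunekehanes}, so the content is in $(2) \Rightarrow (1)$. My strategy is to invoke the converse half of Lemma \ref{genhunekehanes}; this requires showing that each $\Ext^i_R(M,N)$, $1\le i\le d$, has finite length, equivalently that $\Ext^i_{R_\p}(M_\p,N_\p)=0$ for every $\p\ne\m$ and every $i>0$.

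I will prove this vanishing by induction on $h=\height\p$. In the base case $h=0$, the ring $R_\p$ is Artinian, and the tightness of $(M_\p,N_\p)$ over $R_\p$ degenerates to the direct assertion that $\Ext^i_{R_\p}(M_\p,N_\p)=0$ for all $i>0$, since the hypothesis $\Ext^{1\le i\le 0}_{R_\p}(M_\p,N_\p)=0$ is vacuous. For $h>0$, the inductive hypothesis applied at all primes $\q\subsetneq\p$ shows that $\Ext^i_{R_\p}(M_\p,N_\p)$ is supported only at the maximal ideal $\p R_\p$, hence has finite length over $R_\p$. Since $R_\p$ is Cohen-Macaulay of dimension $h$ with canonical module $\omega_\p$, both $M_\p,N_\p\in\CM(R_\p)$, and $M_\p\otimes_{R_\p}N_\p^\vee=(M\otimes N^\vee)_\p\in\CM(R_\p)$ by hypothesis, the converse half of Lemma \ref{genhunekehanes} applied in $R_\p$ yields $\Ext^{1\le i\le h}_{R_\p}(M_\p,N_\p)=0$. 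Tightness of $(M_\p,N_\p)$ then upgrades this to $\Ext^i_{R_\p}(M_\p,N_\p)=0$ for all $i>0$.

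A small routine check is that the tightness hypothesis at non-maximal primes of $R$ descends correctly: for $\q\subsetneq\p$, we have $(R_\p)_{\q R_\p}=R_\q$, and the $R_\q$-modules $(M_\p)_{\q R_\p}$ and $(N_\p)_{\q R_\p}$ agree with $M_\q$ and $N_\q$, so tightness of $(M_\q,N_\q)$ in $R_\q$ is exactly what the induction needs. Once the claim is established, every $\Ext^i_R(M,N)$ for $i\ge 1$ has finite length, and Lemma \ref{genhunekehanes} delivers condition (1). I do not anticipate a genuine obstacle: the only conceptual point is recognizing that tightness is precisely the mechanism required to propagate the low-degree vanishing produced by Lemma \ref{genhunekehanes} to positive-degree vanishing at each stage of the induction.
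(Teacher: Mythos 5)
Your argument is correct and follows essentially the same route as the paper: both prove $(2)\Rightarrow(1)$ by an induction over primes (the paper phrases it as induction on $\dim R$, you on $\height\p$, which is the same ordering), using tightness to promote the low-degree vanishing coming from the converse half of Lemma~\ref{genhunekehanes} to all-degree vanishing at each non-maximal prime, and then concluding from finite length of the $\Ext$ modules. Your version is slightly more explicit about the base case $\height\p=0$ (where tightness degenerates to unconditional vanishing) and about why the tightness hypothesis restricts correctly to $R_\p$, both of which the paper leaves implicit.
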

\begin{proof}
Assume (2). Let $\p$ be a non-maximal prime. By induction on $\dim R$, $\Ext^{1\leq i\leq \hit \p}_{R}(M,N)_{\p}=0$. By assumption on tightness of the pair $(M_{\p}, N_{\p})$, $\Ext^{1\leq i\leq d}_{R}(M,N)_{\p}=0$.  So the modules $\Ext^{1\leq i\leq d}_R(M,N)$ have finite length, and we are done. 
\end{proof}

\section*{Acknowledgments}
It is a pleasure to dedicate this paper to Craig Huneke on the occasion of his 65th birthday. Our approach and many statements owe their form to a careful study of his work. We also thank Ryo Takahashi for many helpful discussions. The first author is partially supported by NSA grant FED0073853. The second author is supported by a grant from the Ministry of Science, Research and Technology, Iran.

\nocite{*}

\bibliographystyle{amsalpha}
\bibliography{mybib}

\providecommand{\bysame}{\leavevmode\hbox to3em{\hrulefill}\thinspace}
\providecommand{\MR}{\relax\ifhmode\unskip\space\fi MR }
% \MRhref is called by the amsart/book/proc definition of \MR.
\providecommand{\MRhref}[2]{%
  \href{http://www.ams.org/mathscinet-getitem?mr=#1}{#2}
}
\providecommand{\href}[2]{#2}
\begin{thebibliography}{{Lin}17a}

\bibitem[ACST17]{AC17}
T.~{Araya}, O.~{Celikbas}, A.~{Sadeghi}, and R.~{Takahashi}, \emph{{On the
  vanishing of self extensions over Cohen-Macaulay local rings}}, ArXiv
  e-prints (2017).

\bibitem[BH93]{BH93}
Winfried Bruns and J\"urgen Herzog, \emph{Cohen-{M}acaulay rings}, Cambridge
  Studies in Advanced Mathematics, vol.~39, Cambridge University Press,
  Cambridge, 1993. \MR{1251956}

\bibitem[Bra04]{BR04}
Amiram Braun, \emph{On a question of {M}. {A}uslander}, J. Algebra \textbf{276}
  (2004), no.~2, 674--684. \MR{2058462}

\bibitem[CD11]{CD11}
Olgur Celikbas and Hailong Dao, \emph{Asymptotic behavior of {E}xt functors for
  modules of finite complete intersection dimension}, Math. Z. \textbf{269}
  (2011), no.~3-4, 1005--1020. \MR{2860275}

\bibitem[CGO75]{CG75}
Lindsay~N. Childs, Gerald Garfinkel, and Morris Orzech, \emph{On the {B}rauer
  group and factoriality of normal domains}, J. Pure Appl. Algebra \textbf{6}
  (1975), no.~2, 111--123. \MR{0371885}

\bibitem[CIST16]{CI16}
O.~{Celikbas}, K.-i. {Iima}, A.~{Sadeghi}, and R.~{Takahashi}, \emph{{On the
  ideal case of a conjecture of Auslander and Reiten}}, ArXiv e-prints (2016).

\bibitem[GT17]{GT17}
Shiro Goto and Ryo Takahashi, \emph{On the {A}uslander-{R}eiten conjecture for
  {C}ohen-{M}acaulay local rings}, Proc. Amer. Math. Soc. \textbf{145} (2017),
  no.~8, 3289--3296. \MR{3652783}

\bibitem[HH94]{HH94}
Melvin Hochster and Craig Huneke, \emph{Indecomposable canonical modules and
  connectedness}, Commutative algebra: syzygies, multiplicities, and birational
  algebra ({S}outh {H}adley, {MA}, 1992), Contemp. Math., vol. 159, Amer. Math.
  Soc., Providence, RI, 1994, pp.~197--208. \MR{1266184}

\bibitem[HH05]{HH05}
Douglas Hanes and Craig Huneke, \emph{Some criteria for the {G}orenstein
  property}, J. Pure Appl. Algebra \textbf{201} (2005), no.~1-3, 4--16.
  \MR{2158743}

\bibitem[HJ03]{HJ03}
Craig Huneke and David~A. Jorgensen, \emph{Symmetry in the vanishing of {E}xt
  over {G}orenstein rings}, Math. Scand. \textbf{93} (2003), no.~2, 161--184.
  \MR{2009580}

\bibitem[HL04]{HL04}
Craig Huneke and Graham~J. Leuschke, \emph{On a conjecture of {A}uslander and
  {R}eiten}, J. Algebra \textbf{275} (2004), no.~2, 781--790. \MR{2052636}

\bibitem[Jor09]{JO09}
David~A. Jorgensen, \emph{On tensor products of rings and extension
  conjectures}, J. Commut. Algebra \textbf{1} (2009), no.~4, 635--646.
  \MR{2575835}

\bibitem[{Lin}17a]{LI17b}
H.~{Lindo}, \emph{{Self-injective commutative rings have no nontrivial rigid
  ideals}}, ArXiv e-prints (2017).

\bibitem[Lin17b]{LI17a}
Haydee Lindo, \emph{Trace ideals and centers of endomorphism rings of modules
  over commutative rings}, J. Algebra \textbf{482} (2017), 102--130.
  \MR{3646286}

\bibitem[Ulr84]{UL84}
Bernd Ulrich, \emph{Gorenstein rings and modules with high numbers of
  generators}, Math. Z. \textbf{188} (1984), no.~1, 23--32. \MR{767359}

\bibitem[Vas68]{VA68}
Wolmer~V. Vasconcelos, \emph{Reflexive modules over {G}orenstein rings}, Proc.
  Amer. Math. Soc. \textbf{19} (1968), 1349--1355. \MR{0237480}

\end{thebibliography}

\end{document}